\newcommand{\KK}{\mathbb{K}}
\newcommand{\NN}{\mathbb{N}}
\newcommand{\RR}{\mathbb{R}}
\newcommand{\A}{\mathcal{A}}
\newcommand \ideal[1] {\langle #1 \rangle}
\newtheorem{Theorem}{Theorem}[section]
\newtheorem{Definition}[Theorem]{Definition}
\newtheorem{Lemma}[Theorem]{Lemma}
\newtheorem{Proposition}[Theorem]{Proposition}
\newtheorem{Corollary}[Theorem]{Corollary}
\newtheorem{Remark}[Theorem]{Remark}
\newtheorem{Example}[Theorem]{Example}
\newtheorem{Conjecture}[Theorem]{Conjecture}
\DeclareMathOperator{\Der}{Der}
\DeclareMathOperator{\pdeg}{pdeg}
\DeclareMathOperator{\gin}{gin}
\DeclareMathOperator{\rgin}{rgin}
\DeclareMathOperator{\LT}{LT}
\DeclareMathOperator{\GL}{GL}
\DeclareMathOperator{\HF}{HF}
\DeclareMathOperator{\DegRevLex}{DegRevLex}
\DeclareMathOperator{\reg}{reg}
\newcommand \M{{\mathcal M}}
\begin{document}

\title[$k$-Lefschetz properties]{$k$-Lefschetz properties, sectional matrices and hyperplane arrangements}

\begin{abstract} 
In this article, we study the $k$-Lefschetz properties for non-Artinian algebras, proving that several known results in the Artinian case can be generalized in this setting. Moreover, we describe how to characterize the graded algebras having the $k$-Lefschetz properties using sectional matrices. We then apply the obtained results to the study of the Jacobian algebra of hyperplane arrangements, with particular attention to the class of free arrangements.
\end{abstract}

\author{Elisa Palezzato}
\address{Elisa Palezzato, Department of Mathematics, Hokkaido University, Kita 10, Nishi 8, Kita-Ku, Sapporo 060-0810, Japan.}
\email{palezzato@math.sci.hokudai.ac.jp}
\author{Michele Torielli}
\address{Michele Torielli, Department of Mathematics, GI-CoRE GSB, Hokkaido University, Kita 10, Nishi 8, Kita-Ku, Sapporo 060-0810, Japan.}
\email{torielli@math.sci.hokudai.ac.jp}


\date{\today}
\maketitle



\section{Introduction}

In \cite{harima2009generic}, the authors introduced the notions of weak and strong $k$-Lefschetz properties as a generalization of the weak and strong Lefschetz properties. These concepts are connected to many topics in algebraic geometry, commutative algebra and combinatorics. Some of these connections are quite surprising and there are still several open questions. See for example \cite{harima2013lefschetz} and \cite{migliore2013survey}.

The goal of this paper is to continue and extend the study of the Lefschetz properties for non-Artinian algebras the authors started in \cite{palezzato2019lefschetz}. In order to do that, we will use the sectional matrix \cite{BPT2016}, a matrix that encodes the Hilbert function of successive hyperplane sections of a homogeneous ideal. In particular, we will highlight how the sectional matrix of a graded algebra plays an important role in the study of Lefschetz properties.  We will then apply the obtained results to the study of the Jacobian algebra of a hyperplane arrangement, with particular attention to the class of free arrangements, as a natural step after \cite{palezzato2019lefschetz}.

This paper is organized as follow. In Section 2, we recall the notions of weak and strong Lefschetz properties with their basic attributes and we characterize such properties via Hilbert functions. In Section 3, we introduce the notions of $k$-WLP and $k$-SLP and describe their basic properties. In Section 4, we recall the notion of almost revlex ideal and we put it in connection with the $k$-Lefschetz properties. In Section 5, we connect the non-Artinian case to the Artinian one. In Section 6, we recall the notion of sectional matrix and we describe how to characterize algebras having $k$-Lefschetz properties using such matrix.  In Section 7, we recall the definitions and basic properties of hyperplane arrangements. In Section 8, we analyze the Jacobian algebra of an arrangement from the $k$-Lefschetz properties point of view, with particular attention to the class of free arrangements.


\section{Lefschetz properties}


Throughout this paper, we will consider $\KK$ a field of characteristic $0$ and $S=\KK[x_1,\dots,x_l]$ the polynomial ring with standard grading. 

\begin{Definition}\label{def:stronstableideal}
A monomial ideal $I$ of $S$ is said to be  \textbf{strongly stable} if for every power-product $t \in I$ and
every $i,j$ such that $1\le i<j\le l$ and $x_j|t$, the power-product $x_i\cdot t/x_j\in I$.
\end{Definition}

\begin{Example}\label{ex:nonss-ssideal}
The ideal $I=\ideal{x^3, x^2y, xy^2, xyz}$ is not strongly stable in $\mathbb{R}[x,y,z]$ because $x\cdot xyz/y = x^2z\not\in I$. It is enough to add $x^2z$ as a minimal generator to $I$ to obtain a strongly stable ideal.
\end{Example}

\begin{Remark}\label{rem:ss=borelfixed} Since $\KK$ has characteristic $0$, an ideal is strongly stable if and only if it is fixed under the natural action of the Borel subgroup of $\GL(l)$.
\end{Remark}

\begin{Definition} 
Let $\sigma$ be a term ordering on $S$ and $f$ a non-zero polynomial in $S$. 
Then $\LT_\sigma(f)= \max_\sigma\{{\rm Supp}(f)\}$, where ${\rm Supp}(f)$ is the set of all power-products 
appearing with non-zero coefficient in $f$.  If
$I$ is an ideal in $S$, then the 
\textbf{leading term ideal} (or \textit{initial ideal})
of $I$ is the ideal $\LT_\sigma(I)$ of $S$ generated by
 $\{\LT_\sigma(f) \mid f \in I{\setminus}\{0\}\;\}$.
\end{Definition}

The following theorem is due to Galligo \cite{galligo1974propos}.

\begin{Theorem}[\cite{galligo1974propos}]\label{thm:Galligo}
Let $I$ be a homogeneous ideal of $S$, with $\sigma$ a term ordering such that
$x_1>_\sigma x_2 >_\sigma \dots >_\sigma x_l$. Then there exists a Zariski
open set $U\subseteq\GL(l)$ and a strongly stable ideal $J$ such that for each  $g\in U$, $\LT_\sigma(g(I)) = J$.
\end{Theorem}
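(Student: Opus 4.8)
The plan is to construct $J$ one degree at a time as the ``generic value'' of the leading term ideal, to patch these values into a single monomial ideal valid on one Zariski open set, and finally to show that this ideal is fixed by the Borel subgroup and hence strongly stable by Remark~\ref{rem:ss=borelfixed}.

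Fix a degree $d$ and put $c_d=\dim_\KK I_d$. Each $g\in\GL(l)$ is a graded automorphism of $S$, so $g$ restricts to a linear isomorphism $S_d\to S_d$ taking $I_d$ onto $(g(I))_d$; in particular $\dim_\KK (g(I))_d=c_d$ for every $g$ and $g\mapsto (g(I))_d$ is a morphism $\GL(l)\to\mathrm{Gr}(c_d,S_d)$. Listing the monomials of degree $d$ in decreasing $\sigma$-order, for each size-$c_d$ set $A$ of them the locus $W_A=\{V\mid \LT_\sigma(V)=A\}$ is locally closed in $\mathrm{Gr}(c_d,S_d)$ — it is where the maximal minor of a basis matrix on the columns indexed by $A$ is nonzero while the minors forcing a strictly larger leading term set vanish — and $\mathrm{Gr}(c_d,S_d)$ is the disjoint union of the finitely many $W_A$. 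Pulling back partitions the irreducible variety $\GL(l)$ into finitely many locally closed pieces, exactly one of which is dense; a dense locally closed subset is open, so we obtain a dense open $U_d\subseteq\GL(l)$ and a monomial set $J_d$ with $\LT_\sigma(g(I))_d=J_d$ for all $g\in U_d$.

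I would then check that $J:=\bigoplus_d J_d$ is the ideal we want. The intersection $U_d\cap U_{d+1}$ is dense open, hence nonempty; choosing $g$ there gives $S_1\cdot J_d=S_1\cdot\LT_\sigma(g(I))_d\subseteq \LT_\sigma(g(I))_{d+1}=J_{d+1}$ because $\LT_\sigma(g(I))$ is an ideal, so $J$ is a (monomial) ideal and is therefore finitely generated, say in degrees $\le D$. Put $U:=\bigcap_{d\le D}U_d$, still dense open. For $g\in U$: if $d\le D$ then $\LT_\sigma(g(I))_d=J_d$ by construction, while if $d>D$ then $J_d=\sum_{e\le D}S_{d-e}J_e\subseteq\LT_\sigma(g(I))_d$ since $\LT_\sigma(g(I))$ is an ideal, and equality follows from $\dim_\KK\LT_\sigma(g(I))_d=\dim_\KK(g(I))_d=c_d=\dim_\KK J_d$. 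Hence $\LT_\sigma(g(I))=J$ for every $g\in U$.

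The remaining, and genuinely delicate, point is that $J$ is fixed by the Borel subgroup $B\subseteq\GL(l)$ stabilizing the flag $\langle x_1\rangle\subset\langle x_1,x_2\rangle\subset\cdots\subset S_1$; by Remark~\ref{rem:ss=borelfixed} this is exactly strong stability, and it is the classical core of the theorem. The preliminary observations are easy: $J$ is a monomial ideal so the diagonal torus fixes it, and $\gin$ is $\GL(l)$-invariant (that is, $\LT_\sigma(g(I))$ for generic $g$ is unchanged if $I$ is replaced by $h(I)$, since $\{gh\mid g\in U\}$ is again dense open). The substance is to show that each elementary unipotent $\varphi_{ij}(\lambda)\colon x_j\mapsto x_j+\lambda x_i$ with $i<j$ fixes $J$ — these together with the torus generate $B$. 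I would deduce this from the extremality of the generic initial ideal: by upper semicontinuity of the leading term ideal under flat degeneration, $\gin(I)$ is the $\sigma$-largest leading term ideal occurring in the $\GL(l)$-orbit of $I$, and combining this with a one-parameter degeneration in the unipotent direction pins $\varphi_{ij}(\lambda)(J)$ down to $J$. I expect this last step to be the main obstacle: it requires controlling the degeneration carefully enough to upgrade an a priori one-sided comparison of leading term ideals into an equality, and it is precisely here that the hypothesis $\mathrm{char}\,\KK=0$ enters, via Remark~\ref{rem:ss=borelfixed}.
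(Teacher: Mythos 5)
The paper itself does not prove this statement---it is imported from Galligo's 1974 paper as a black box---so I am judging your argument on its own terms. Your first two paragraphs are correct and essentially complete: the stratification of the Grassmannian by leading-term sets into locally closed pieces, the observation that exactly one pullback stratum in the irreducible variety $\GL(l)$ is dense (hence open), and the patching of the degreewise generic values $J_d$ into a single finitely generated monomial ideal $J$ with $\LT_\sigma(g(I))=J$ on a dense open $U$ are all sound. The genuine gap is exactly where you locate it yourself: you never actually prove that $J$ is fixed by the elementary unipotent maps $\varphi_{ij}(\lambda)$, and that is not a loose end but the entire content of the theorem---everything up to that point only shows that a well-defined ``generic leading term ideal'' exists, with no structure on it. Your final paragraph announces a strategy and then concedes that you do not know how to execute the key step (``upgrade an a priori one-sided comparison \dots into an equality''), so as written the proposal establishes only a strictly weaker statement than the one the paper uses (strong stability is what drives, e.g., Lemma~\ref{lemma:SSWLPx_l} and Proposition~\ref{prop:inductionkwlp} downstream).

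The extremality fact you want to invoke does already follow from your own setup, and it is worth saying precisely: in each degree $d$, since $p_{A'}$ vanishes on the dense stratum for every subset $A'$ greater than $J_d$ in the greedy order, $p_{A'}$ vanishes identically on $\GL(l)$; hence $J_d$ is the \emph{maximal} $c_d$-subset of monomials whose Pl\"ucker coordinate is not identically zero on the orbit. What is missing is the computation that converts this maximality into stability: one must expand $p_{J_d}\bigl(\varphi_{ij}(\lambda)g\bigr)$ as a polynomial in $\lambda$ and show that its coefficients are integer combinations of Pl\"ucker coordinates $p_{A'}(g)$ with $A'$ obtained from $J_d$ by replacing monomials $m$ divisible by $x_j$ with $x_i^a m/x_j^a$, so that if $J_d$ were \emph{not} closed under $m\mapsto x_im/x_j$ the top coefficient would involve some $A'>J_d$ with $p_{A'}\not\equiv 0$, contradicting maximality. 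The binomial coefficients appearing in that expansion are where $\operatorname{char}\KK=0$ genuinely enters (in positive characteristic one only gets Borel-fixedness, not strong stability), so your appeal to Remark~\ref{rem:ss=borelfixed} is pointed at the right place but cannot substitute for the computation. Until that step is supplied, the proof is incomplete.
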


\begin{Definition} 
The strongly stable ideal $J$ given in Theorem~\ref{thm:Galligo} is called the \textbf{generic initial ideal} with respect to $\sigma$ of $I$ and it is denoted by {\boldmath$\gin_\sigma(I)$}.  
In particular, when $\sigma= \DegRevLex$, $\gin_\sigma(I)$ is simply denoted by {\boldmath$\rgin(I)$}.
\end{Definition}

As described in \cite{BPT2016}, we can read a lot of information on an ideal from its generic initial ideal. For example, we have the following.
\begin{Remark}\label{rem:rginsameHFasideal} 
Let $I$ be a homogeneous ideal of $S$. Then the Hilbert function of $S/I$ coincides with the one of $S/\rgin(I)$.
\end{Remark}


We can now introduce the notions of weak and strong Lefschetz properties for graded algebras.
\begin{Definition}\label{def:WLPandSLP} 
Let $R$ be a graded ring over $\KK$, and $R = \bigoplus_{i\geq 0} R_i$ its decomposition into homogeneous
components with $\dim_\KK (R_i) < \infty$.
\begin{enumerate}
\item The graded ring $R$ is said to have the \textbf{weak Lefschetz property (WLP)}, 
if there exists an element $\ell \in R_1$ such that
the multiplication map
\begin{align*}
\times \ell \colon R_i &\rightarrow R_{i+1}\\ 
f &\mapsto \ell f
\end{align*}
 is full-rank for every $i \geq 0$. In this case, $\ell$ is called a \textbf{weak Lefschetz element}.
\item The graded ring $R$ is said to have the \textbf{strong Lefschetz property (SLP)}, 
if there exists an element $\ell \in R_1$ such that
the multiplication map
\begin{align*}
\times \ell^s \colon R_i &\rightarrow R_{i+s}\\ 
f &\mapsto \ell^sf
\end{align*}
is full-rank for every $i \geq 0$ and $s\geq 1$. In this case, $\ell$ is called a \textbf{strong Lefschetz element}.
\end{enumerate}
\end{Definition}

In \cite{palezzato2019lefschetz}, the authors studied in depth these two notions. Between all the results obtained, we state here the two that play an important role in this paper. 


\begin{Lemma}\cite[Lemma 2.8]{palezzato2019lefschetz}\label{lemma:SSWLPx_l}
Let $I$ be a strongly stable ideal of $S$.
Then $S/I$ has the SLP (respectively the WLP) if and only if $~S/I$ has the SLP (respectively the WLP) with Lefschetz element $x_l$. 
\end{Lemma}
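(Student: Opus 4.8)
The plan is to exploit the fact that the property in question (having the SLP or WLP) is witnessed by a \emph{generic} linear form, while the ideal $I$ itself is Borel-fixed, so that the symmetry group of $I$ can be used to move a generic witness onto the specific coordinate form $x_l$. Concretely, one direction is trivial: if $S/I$ has the SLP (resp. WLP) with Lefschetz element $x_l$, then by definition $S/I$ has the SLP (resp. WLP). So the content is the forward implication, and there I would argue as follows.

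First I would set up the locus of Lefschetz elements. Fix an integer $s\ge 1$ (for WLP take $s=1$ throughout). For each degree $i\ge 0$, the condition that multiplication $\times \ell^s\colon (S/I)_i\to (S/I)_{i+s}$ has maximal rank is, for $\ell$ ranging over $S_1\cong \KK^l$, the non-vanishing of at least one maximal minor of the matrix of that map; hence it defines a Zariski-open subset $U_{i,s}\subseteq S_1$, which is nonempty precisely because $S/I$ has the SLP (resp. WLP). Since $\dim_\KK(S/I)_i$ is eventually constant and in fact the maps are forced to be surjective for $i\gg 0$ and injective for $i$ in a bounded range, only finitely many of the $U_{i,s}$ are proper open subsets; intersecting, $U=\bigcap_{i,s}U_{i,s}$ is a nonempty Zariski-open subset of $S_1$, the set of all Lefschetz elements. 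The key point is that $U$ is invariant under any automorphism of $S$ that preserves $I$: if $g\in\GL(l)$ with $g(I)=I$, then $g$ induces a graded automorphism of $S/I$ intertwining multiplication by $\ell^s$ with multiplication by $g(\ell)^s$, so $g(U)=U$.

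Now I would invoke the Borel-fixedness. By Remark~\ref{rem:ss=borelfixed}, since $\KK$ has characteristic $0$, the strongly stable ideal $I$ is fixed by the Borel subgroup $B\subseteq\GL(l)$ of upper triangular matrices. The orbit $B\cdot x_l$ is the set of linear forms $c_1 x_1+\cdots+c_l x_l$ with $c_l\ne 0$, which is a nonempty Zariski-open (in fact dense) subset of $S_1$. Therefore $U\cap (B\cdot x_l)\ne\emptyset$: pick $\ell$ in this intersection and $b\in B$ with $b(x_l)=\ell$. Since $b(I)=I$, applying $b^{-1}$ transports the Lefschetz property from $\ell$ to $x_l$; explicitly, $b^{-1}$ gives an isomorphism $S/I\to S/I$ under which $\times\ell^s$ on the source corresponds to $\times x_l^s$ on the target, so all these maps have maximal rank, i.e. $x_l$ is a strong (resp. weak) Lefschetz element for $S/I$. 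This completes the forward direction and hence the lemma.

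I do not expect a serious obstacle here; the only mild subtlety is the non-Artinian setting, where $S/I$ is infinite-dimensional, so I must be careful that ``full-rank for every $i$'' really does cut out a Zariski-open nonempty locus — this is handled by observing that for all large $i$ surjectivity is automatic once it holds for one generic form (the Hilbert function of $S/I$ is eventually a polynomial and the generic section controls it), so only finitely many genuine open conditions are imposed and their intersection stays nonempty. With that observed, the orbit argument via $B$ goes through verbatim as in the Artinian case, and indeed this is essentially the proof of \cite[Lemma 2.8]{palezzato2019lefschetz} that the statement cites.
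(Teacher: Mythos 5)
Your argument is the standard one and matches the approach this paper itself uses for the analogous reduction in Proposition~\ref{prop:inductionkwlp}: one direction is trivial, and for the other you use that a strongly stable ideal is Borel-fixed (Remark~\ref{rem:ss=borelfixed}), that the set of Lefschetz elements is a nonempty Zariski-open subset of $S_1$ invariant under the stabilizer of $I$, and that the Borel orbit of $x_l$ is the dense open set of forms with nonzero $x_l$-coefficient, so the two must meet. That is exactly the intended proof, and the orbit step is indispensable here, since the Borel orbit of a form with vanishing $x_l$-coefficient stays inside the hyperplane $\{c_l=0\}$ and so cannot reach $x_l$ by symmetry alone.

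One point in your justification of openness is off, though, and should be repaired. You claim that $\dim_\KK(S/I)_i$ is eventually constant and that the maps are forced to be surjective for $i\gg 0$; that is the Artinian picture. In the non-Artinian setting of this paper the Hilbert function of $S/I$ eventually agrees with a Hilbert polynomial of positive degree, so it is eventually strictly increasing and full rank means \emph{injective} for $i\gg 0$ (and likewise, for fixed $i$ and $s\gg 0$). The conclusion you need, namely that $U=\bigcap_{i,s}U_{i,s}$ is still a finite intersection of nonempty opens and hence dense, remains true, but for a different reason: all conditions in degrees $d>\reg(I)$ are implied by those in degrees $\le\reg(I)$ (for instance, injectivity of $\times\ell$ in all large degrees is the open condition that $\ell$ avoid the finitely many associated primes of $S/I$ other than the irrelevant ideal, and injectivity of $\times\ell^s$ for large $s$ follows by factoring through consecutive injective multiplications). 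With that substitution the proof is complete and is essentially the proof of the cited \cite[Lemma 2.8]{palezzato2019lefschetz}.
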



\begin{Proposition}\cite[Proposition 2.9]{palezzato2019lefschetz}\label{prop:ILPginLP}
Let $I$ be a homogeneous ideal of $S$.
Then the graded ring $S/I$ has the SLP (respectively the WLP) if and only if $S/\rgin(I)$ 
has the SLP (respectively the WLP).
\end{Proposition}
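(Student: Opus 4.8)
The plan is to test both the SLP and the WLP by multiplication with powers of the last variable $x_l$, and to compare the ranks of these maps for $S/g(I)$ and for $\rgin(I)=\LT_{\DegRevLex}(g(I))$ when $g\in\GL(l)$ is generic. The one purely order-theoretic input is the identity $\LT_{\DegRevLex}(J:x_l^{\,s})=\LT_{\DegRevLex}(J):x_l^{\,s}$, valid for every homogeneous ideal $J\subseteq S$ and every $s\ge 1$. For $s=1$ this is the classical feature of \DegRevLex\ (with $x_1>_\sigma\cdots>_\sigma x_l$): if $x_l\mid\LT_{\DegRevLex}(f)$ for a homogeneous $f$, then $x_l\mid f$, so $f=x_lf'$ with $\LT_{\DegRevLex}(f')=\LT_{\DegRevLex}(f)/x_l$, and $f'\in J:x_l$ whenever $f\in J$; this gives the inclusion $\LT_{\DegRevLex}(J):x_l\subseteq\LT_{\DegRevLex}(J:x_l)$, the reverse one being immediate. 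The general case follows by iterating, using $J:x_l^{\,s}=(J:x_l):x_l^{\,s-1}$.

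Next I would observe that for any homogeneous ideal $J$ and any $s\ge 1$, the rank of the map $\times x_l^{\,s}\colon(S/J)_i\to(S/J)_{i+s}$ equals $\dim_\KK(S/(J:x_l^{\,s}))_i$, so whether this map has full rank in degree $i$ depends only on the Hilbert functions of $S/J$ and of $S/(J:x_l^{\,s})$. Fix $g$ in the Zariski-open set of Theorem~\ref{thm:Galligo}, so that $\LT_{\DegRevLex}(g(I))=\rgin(I)$, and take $J=g(I)$. By Remark~\ref{rem:rginsameHFasideal} the Hilbert functions of $S/g(I)$ and $S/\rgin(I)$ coincide, and since an ideal and its initial ideal share the same Hilbert function, the colon identity above shows that the Hilbert functions of $S/(g(I):x_l^{\,s})$ and of $S/(\rgin(I):x_l^{\,s})=S/\LT_{\DegRevLex}(g(I):x_l^{\,s})$ coincide as well. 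Therefore, for every $i\ge 0$ and every $s\ge 1$, the map $\times x_l^{\,s}$ on $S/g(I)$ has full rank if and only if $\times x_l^{\,s}$ on $S/\rgin(I)$ does; equivalently, $x_l$ (respectively each $x_l^{\,s}$) is a weak (respectively strong) Lefschetz element for $S/g(I)$ if and only if it is one for $S/\rgin(I)$.

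Now I assemble both directions. If $S/\rgin(I)$ has the SLP or the WLP, then, $\rgin(I)$ being strongly stable, Lemma~\ref{lemma:SSWLPx_l} makes $x_l$ (respectively each $x_l^{\,s}$) a Lefschetz element for $S/\rgin(I)$; by the comparison just made, the same element is a Lefschetz element for $S/g(I)$, and since $S/g(I)\cong S/I$ as graded $\KK$-algebras, $S/I$ has the property. Conversely, if $S/I$ has the SLP or the WLP, then a general linear form of $S$ is a Lefschetz element, and for generic $g$ the linear form of $S/I$ corresponding to $x_l$ under the isomorphism $S/I\cong S/g(I)$ is general, hence a Lefschetz element; therefore $x_l$ (respectively each $x_l^{\,s}$) is a Lefschetz element for $S/g(I)$, hence for $S/\rgin(I)$ by the comparison, and $S/\rgin(I)$ has the property.

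I expect the genericity claim in the converse to be the main obstacle. In the non-Artinian case one has to justify that ``a general linear form is a Lefschetz element whenever one exists'' by reducing the infinitely many Zariski-open rank conditions to finitely many: beyond the Castelnuovo--Mumford regularity of $S/I$ the maps $\times x_l^{\,s}$ automatically have full rank for any linear form lying outside the finitely many non-maximal associated primes of $S/I$, so only finitely many dense open conditions remain and their intersection is nonempty. The \DegRevLex\ colon identity is the other essential ingredient, but it is short and classical.
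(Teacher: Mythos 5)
The paper does not reprove this statement (it is quoted from \cite{palezzato2019lefschetz}), so I am judging your argument on its own terms. Its core is sound, and the two algebraic inputs are exactly the right ones: the \DegRevLex{} identity $\LT_{\DegRevLex}(J:x_l^s)=\LT_{\DegRevLex}(J):x_l^s$, and the observation that the rank of $\times x_l^s\colon (S/J)_i\to (S/J)_{i+s}$ equals $\dim_\KK (S/(J:x_l^s))_i$, so that full rank is a condition on Hilbert functions only. (The paper's surrounding machinery, e.g.\ the proof of Proposition~\ref{prop:SLPdiffHF}, works instead with the cokernel via $\LT_{\DegRevLex}(g(I)+\ideal{x_l^s})=\LT_{\DegRevLex}(g(I))+\ideal{x_l^s}$ and Conca's Lemma~1.2; the two are equivalent in content.) Your direction ``$S/\rgin(I)$ has the property $\Rightarrow$ $S/I$ does'' is complete, since Lemma~\ref{lemma:SSWLPx_l} hands you $x_l$ as the Lefschetz element on the strongly stable side and the Hilbert-function comparison transports it to $S/g(I)\cong S/I$.

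The gap is where you expected it, but your sketch does not yet close it for the SLP. Passing to $\ell$ outside the non-maximal associated primes makes $\times\ell^s$ injective on $(S/I)_i$ for all $i\ge\reg(I)$ and all $s$ (since then $I:\ell^s\subseteq I^{\mathrm{sat}}$ and $I^{\mathrm{sat}}_i=I_i$ in that range), so for the WLP only the finitely many degrees $i<\reg(I)$ remain and your intersection argument works. For the SLP, however, each remaining degree $i<\reg(I)$ still carries infinitely many conditions, one for each $s\ge1$, so ``only finitely many dense open conditions remain'' is not yet justified. You need one more step: either note that for $\ell$ avoiding the non-maximal associated primes one has $I:\ell^s=I^{\mathrm{sat}}$ for all $s\ge N$ with $N$ the saturation index (independent of $\ell$), so that for $s\ge N$ the full-rank condition in degree $i$ is independent of $\ell$ and is therefore either satisfied by all such $\ell$ or by none --- and it is satisfied by some, since each $V_{i,s}$ is dense open; or factor $\times\ell^s$ through degree $\reg(I)$ as the paper does in Theorem~\ref{theo:kSLPtoArtinian}, in which case you must also control the Hilbert function (a composition of two full-rank maps need not have full rank without a unimodality-type hypothesis). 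With either supplement the proof is correct.
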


%

Similarly to Remark 6.11 of \cite{harima2013lefschetz}, to check if a quotient algebra has the SLP, it is enough to check the differences of its Hilbert function.

\begin{Proposition}\label{prop:SLPdiffHF} Let $I$ be a homogeneous ideal of $S$.
Then the graded ring $S/I$ has the SLP with strong Lefschetz element $\ell$ if and only if for every $s\ge1$ and $d\ge0$ we have that
\begin{equation}\label{eq:SLPHilbertdiffequal}
\HF(S/(I+\ideal{\ell^s}),d)=\max\{\HF(S/I,d)-\HF(S/I,d-s),0\},
\end{equation}
where $\HF(S/I,r)=0$ for all $r<0$. 
\end{Proposition}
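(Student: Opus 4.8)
The plan is to exploit the standard exact sequence relating multiplication by a linear form to the quotient by that form. Fix $\ell \in S_1$. For each $s \geq 1$ and $d \geq 0$, consider the exact sequence of $\KK$-vector spaces
\begin{equation*}
(S/I)_{d-s} \xrightarrow{\times \ell^s} (S/I)_d \longrightarrow (S/(I+\ideal{\ell^s}))_d \longrightarrow 0,
\end{equation*}
which shows that $\HF(S/(I+\ideal{\ell^s}),d) = \HF(S/I,d) - \rk(\times \ell^s \colon (S/I)_{d-s} \to (S/I)_d)$. Since the rank of the map $\times \ell^s$ is at most $\min\{\HF(S/I,d-s),\HF(S/I,d)\}$, we always have
\begin{equation*}
\HF(S/(I+\ideal{\ell^s}),d) \geq \max\{\HF(S/I,d)-\HF(S/I,d-s),0\},
\end{equation*}
with equality precisely when $\times \ell^s$ has maximal rank, i.e. is full-rank in the sense of Definition~\ref{def:WLPandSLP}. (The $\max$ with $0$ accounts for the case $\HF(S/I,d-s) > \HF(S/I,d)$, where surjectivity is the relevant condition; the convention $\HF(S/I,r)=0$ for $r<0$ handles small $d$ automatically, and for such $d$ the formula reads $\HF(S/(I+\ideal{\ell^s}),d) = \HF(S/I,d)$, consistent with $\ell^s \in S_s$ acting trivially out of degrees below $s$.)

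The forward direction is then immediate: if $S/I$ has the SLP with strong Lefschetz element $\ell$, then by definition $\times \ell^s \colon (S/I)_i \to (S/I)_{i+s}$ is full-rank for all $i \geq 0$ and $s \geq 1$, so applying the above with $i = d-s$ gives equality in~\eqref{eq:SLPHilbertdiffequal} for all $s \geq 1$ and $d \geq 0$. Conversely, if~\eqref{eq:SLPHilbertdiffequal} holds for all $s \geq 1$ and $d \geq 0$, then for every $i \geq 0$ and $s \geq 1$, setting $d = i+s$ forces $\rk(\times \ell^s \colon (S/I)_i \to (S/I)_{i+s}) = \min\{\HF(S/I,i),\HF(S/I,i+s)\}$, which is exactly the statement that $\times \ell^s$ is full-rank. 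Hence $\ell$ is a strong Lefschetz element and $S/I$ has the SLP.

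The only genuinely delicate point is bookkeeping around the boundary: one must check that the claimed equivalence between "full-rank" and equality in~\eqref{eq:SLPHilbertdiffequal} is not spoiled in low degrees or when one of the two dimensions vanishes, and that the index substitution $d = i+s$ versus $i = d-s$ correctly ranges over all required pairs once the convention $\HF(S/I,r)=0$ for $r<0$ is in force. I expect this to be the main (though mild) obstacle; the algebraic content is entirely carried by the three-term exact sequence above, and the rest is verifying that the two quantifier ranges $\{s \geq 1, d \geq 0\}$ and $\{s \geq 1, i \geq 0\}$ match up under the shift. I would also remark that this is the strong-Lefschetz analogue of the Hilbert-function characterization alluded to in Remark~6.11 of~\cite{harima2013lefschetz}, and note in passing that the same argument with $s = 1$ recovers the corresponding criterion for the WLP.
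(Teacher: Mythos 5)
Your proof is correct, and it takes a genuinely different and in fact more direct route than the paper. You work with the exact sequence $(S/I)_{d-s} \xrightarrow{\times \ell^s} (S/I)_d \to (S/(I+\ideal{\ell^s}))_d \to 0$, which immediately gives $\HF(S/(I+\ideal{\ell^s}),d)=\HF(S/I,d)-\rk(\times\ell^s)$ and hence the equivalence between equality in \eqref{eq:SLPHilbertdiffequal} and full rank of $\times\ell^s$; the index bookkeeping you flag is indeed harmless, since for $d<s$ both sides degenerate correctly under the convention $\HF(S/I,r)=0$ for $r<0$. The paper instead first reduces to the case where $I$ is strongly stable and $\ell=x_l$ (via a variant of Conca's Lemma~1.2 identifying $\HF(S/(I+\ideal{\ell^s}))$ with $\HF(S/(\rgin(I)+\ideal{x_l^s}))$ for generic $\ell$, combined with Lemma~\ref{lemma:SSWLPx_l} and Proposition~\ref{prop:ILPginLP}), and then argues combinatorially by decomposing the monomial vector space $I_d$ into the part divisible by $x_l^s$ and the part that is not. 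What your approach buys is brevity and strictly greater generality: it applies verbatim to an arbitrary fixed linear form $\ell$, whereas the paper's reduction is really tailored to generic $\ell$. What the paper's approach buys is the explicit monomial decomposition for strongly stable ideals, which is reused in spirit elsewhere (e.g.\ in Theorem~\ref{theo:kSLPtoArtinian} and the sectional-matrix results), and an argument template that generalizes to the iterated quotients needed for the $k$-SLP. Your closing remark that $s=1$ recovers the WLP criterion matches Proposition~\ref{prop:WLPdiffHF} exactly.
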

\begin{proof} By slightly generalizing the arguments of the proof of Lemma~1.2 from \cite{conca2003reduction} (using that $\LT_{\DegRevLex}(gI+\ideal{x^k_l})= \LT_{\DegRevLex}(gI)+\ideal{x^k_l}$ for all $k \geq 1$), one obtains that the Hilbert function of $S/(\rgin(I)+\ideal{x_l^s})$ is equal to the Hilbert function of $S/(I+\ideal{\ell^s})$ for a general linear form $\ell \in S_1$ and all $s \geq 1$. This fact, together with Lemma~\ref{lemma:SSWLPx_l} and Proposition~\ref{prop:ILPginLP}, implies that it is enough to prove the statement when $I$ is a strongly stable ideal and $\ell=x_l$.

Assume that $S/I$ has the SLP with strong Lefschetz element $x_l$. Fix $s\ge1$ and $d\ge0$. By assumption the multiplication map
$$\times x_l^s \colon (S/I)_{d-s} \rightarrow (S/I)_d $$
has full rank. If this map is surjective, then $\HF(S/I,d)-\HF(S/I,d-s)\le0$ and all the generators of $(S/I)_d$ are divisible by $x_l^s$. This implies that $S/(I+\ideal{x_l^s})=0$ and hence $\HF(S/(I+\ideal{x_l^s}),d)=0=\max\{\HF(S/I,d)-\HF(S/I,d-s),0\}$. On the other hand, if the multiplication map is injective, then $\HF(S/I,d)-\HF(S/I,d-s)\ge0$ and for every power-product $x_l^st\in I_d$, we have that $t\in I_{d-s}$. This implies that $I_d$ is the union of the two disjoint spaces $x_l^s\cdot I_{d-s}$ and the space generated by all power products $u\in I_d$ such that $x_l^s\nmid u$.
As a consequence we have that $\HF(S/(I+\ideal{x_l^s}),d)=\HF(S/I,d)-\HF(S/I,d-s)=\max\{\HF(S/I,d)-\HF(S/I,d-s),0\}$. This implies that $S/I$ satisfies \eqref{eq:SLPHilbertdiffequal}.

Assume now that $S/I$ satisfies  \eqref{eq:SLPHilbertdiffequal}. 
Fix $s\ge1$ and $d\ge0$. If we have that $\HF(S/(I+\ideal{x_l^s}),d)=\HF(S/I,d)-\HF(S/I,d-s)>0$, then the multiplication map
$$\times x_l^s \colon (S/I)_{d-s} \rightarrow (S/I)_d $$
cannot be surjective. Suppose that such map is also not injective. This implies that there exists a power-product $t\in S_{d-s}\setminus I_{d-s}$ such that $x_l^st\in I_d$. This implies that $I_d$ contains strictly the union of the two disjoint spaces $x_l^s\cdot I_{d-s}$ and the space generated by all power products $u\in I_d$ such that $x_l^s\nmid u$. As a consequence, $\HF(S/(I+\ideal{x_l^s}),d)+\HF(S/I,d-s)>\HF(S/I,d)$, but this is a contraddiction, and hence the multiplication map has full rank.
On the other hand, if $\HF(S/(I+\ideal{x_l^s}),d)=0$, then $(S/(I+\ideal{x_l^s}))_d=0$ and $\HF(S/I,d)-\HF(S/I,d-s)\le0$. If $(S/I)_d=0$, then the multiplication map 
$$\times x_l^s \colon (S/I)_{d-s} \rightarrow (S/I)_d $$
is clearly surjective. If $(S/I)_d\ne0$, since $(S/(I+\ideal{x_l^s}))_d=0$, then all the generators of $(S/I)_d$ are divisible by $x_l^s$. This implies that also in this case the multiplication map is surjective, and hence it has full rank.

Since this argument works for all $s\ge1$ and $d\ge0$, then $S/I$ has the SLP with strong Lefschetz element $x_l$.
\end{proof}

If we consider only the case when $s=1$ in the argument of Proposition~\ref{prop:SLPdiffHF}, we obtain a characterization of the WLP via the first differences of the Hilbert function.

\begin{Proposition}\label{prop:WLPdiffHF} Let $I$ be a homogeneous ideal of $S$.
Then the graded ring $S/I$ has the WLP with weak Lefschetz element $\ell$ if and only if for every $d\ge0$ we have that
$$\HF(S/(I+\ideal{\ell}),d)=\max\{\HF(S/I,d)-\HF(S/I,d-1),0\},$$
where $\HF(S/I,-1)=0$. 
\end{Proposition}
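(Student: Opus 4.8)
The plan is to obtain Proposition~\ref{prop:WLPdiffHF} as the $s=1$ specialization of Proposition~\ref{prop:SLPdiffHF}, essentially by rerunning the same argument but only at the single step $d-1 \to d$ rather than at all steps $d-s \to d$. Since the WLP only concerns the maps $\times \ell \colon (S/I)_d \to (S/I)_{d+1}$, whereas the SLP concerns $\times \ell^s$ for all $s \geq 1$, the WLP statement is literally the restriction of the SLP equivalence to $s=1$, and nothing in the proof of Proposition~\ref{prop:SLPdiffHF} couples different values of $s$: each $s$ is handled independently.

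First I would reduce to the strongly stable case with $\ell = x_l$. The reduction from the proof of Proposition~\ref{prop:SLPdiffHF} already gives, via the generalization of \cite[Lemma~1.2]{conca2003reduction} together with the identity $\LT_{\DegRevLex}(gI + \ideal{x_l}) = \LT_{\DegRevLex}(gI) + \ideal{x_l}$, that $\HF(S/(\rgin(I) + \ideal{x_l}), d) = \HF(S/(I + \ideal{\ell}), d)$ for general $\ell \in S_1$ and all $d$; combined with Lemma~\ref{lemma:SSWLPx_l} and Proposition~\ref{prop:ILPginLP}, this shows it suffices to prove the statement for $I$ strongly stable and $\ell = x_l$.

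Next I would run the two implications exactly as in Proposition~\ref{prop:SLPdiffHF} with $s$ fixed equal to $1$. For the forward direction, assuming $S/I$ has the WLP with element $x_l$: fix $d \geq 0$ and consider $\times x_l \colon (S/I)_{d-1} \to (S/I)_d$. If it is surjective, then every minimal generator of $(S/I)_d$ is divisible by $x_l$, so $(S/(I + \ideal{x_l}))_d = 0$ and the claimed equality reads $0 = \max\{\cdots, 0\}$. If it is injective, then $x_l t \in I_d \Rightarrow t \in I_{d-1}$, so $I_d$ decomposes as the disjoint union of $x_l \cdot I_{d-1}$ and the span of the power-products $u \in I_d$ with $x_l \nmid u$, yielding $\HF(S/(I + \ideal{x_l}), d) = \HF(S/I, d) - \HF(S/I, d-1) \geq 0$. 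For the converse, assuming the Hilbert-function identity: if the difference is positive then $\times x_l$ cannot be surjective, and if it were also not injective there would be a monomial $t \in S_{d-1} \setminus I_{d-1}$ with $x_l t \in I_d$, forcing $\HF(S/(I+\ideal{x_l}),d) + \HF(S/I,d-1) > \HF(S/I,d)$, a contradiction; if instead $\HF(S/(I+\ideal{x_l}),d) = 0$ then either $(S/I)_d = 0$ (map trivially surjective) or all generators of $(S/I)_d$ are divisible by $x_l$ (map surjective). In all cases $\times x_l$ has full rank.

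There is no real obstacle here: the only thing to verify carefully is that the reduction lemma from \cite{conca2003reduction} specializes correctly at $k=1$ (it does, since $k$ there ranges over all positive integers), and that the monomial-combinatorial decomposition of $I_d$ used in the SLP proof is unchanged when $s=1$. One could therefore write this simply as: ``The proof is identical to that of Proposition~\ref{prop:SLPdiffHF}, restricting attention to $s=1$.'' If a self-contained argument is preferred, I would reproduce the four short cases above verbatim with $x_l^s$ replaced by $x_l$ throughout.
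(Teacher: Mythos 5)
Your proposal is correct and matches the paper exactly: the paper itself derives Proposition~\ref{prop:WLPdiffHF} by remarking that it is the $s=1$ case of the argument proving Proposition~\ref{prop:SLPdiffHF}, which is precisely what you carry out (including the reduction to the strongly stable case with $\ell = x_l$ and the injective/surjective case analysis). No gaps; the fully written-out $s=1$ specialization you give is a faithful expansion of the paper's one-line justification.
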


\section{$k$-WLP and $k$-SLP}

As a generalization of the Lefschetz properties of Definition~\ref{def:WLPandSLP}, we can introduce the $k$-WLP and $k$-SLP. See \cite{harima2013lefschetz} and \cite{harima2009generic} for more details.
 
\begin{Definition}\label{def:klefsprop} Let $R$ be a graded ring over $\KK$, $R = \bigoplus_{i\geq 0} R_i$ its decomposition into homogeneous
components with $\dim_\KK (R_i) < \infty$, and $k$ a positive integer. The graded ring $R$ is said to have the {\boldmath$k$}\textbf{-SLP} (respectively the {\boldmath$k$}\textbf{-WLP}) if there exist linear elements $\ell_1,\dots,\ell_k\in R_1$ satisfying the following two conditions
\begin{enumerate}
\item $R$ has the SLP (respectively WLP) with Lefschetz element $\ell_1$,
\item $R/\ideal{\ell_1,\dots,\ell_{i-1}}$ has the SLP (respectively WLP) with Lefschetz element $\ell_i$, for all $i=2,\dots,k$.
\end{enumerate}
In this case we will say that $(R,\ell_1,\dots,\ell_k)$ has the $k$-SLP (respectively $k$-WLP).
\end{Definition}
\begin{Remark} As noted in Remark 6.2 of \cite{harima2013lefschetz}, if $(R,\ell_1,\dots,\ell_k)$ has the $k$-SLP (respectively $k$-WLP), then $\ell_1$ is a Lefschetz element for $R$. However, if $g_1$ is another Lefschetz element for $R$, there do not necessarily exist $g_2,\dots,g_k\in R_1$ such that $(R,g_1,\dots,g_k)$ has the $k$-SLP (respectively $k$-WLP)
\end{Remark}

Similarly to Proposition 6.9 of \cite{harima2013lefschetz}, we have the following two statements.

\begin{Proposition}\label{prop:inductionkwlp} Let $I$ be a strongly stable ideal of $S$ and $1\le k\le l$. $S/I$ has the $k$-SLP (respectively the $k$-WLP) if and only if $S/I$ has the SLP (respectively the WLP) with Lefschetz element $x_l$ and $S/(I+\ideal{x_l})$ has the $(k{-}1)$-SLP (respectively the $(k{-}1)$-WLP).
\end{Proposition}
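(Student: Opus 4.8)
The plan is to prove the statement for the SLP; the WLP case is identical, using Proposition~\ref{prop:WLPdiffHF} in place of Proposition~\ref{prop:SLPdiffHF} wherever a Hilbert function bound is needed. Write $\bar S=\KK[x_1,\dots,x_{l-1}]$ and, for an ideal $\mathfrak a\subseteq S$, let $\overline{\mathfrak a}$ denote its image in $\bar S$ under $x_l\mapsto 0$. Since $I$ is strongly stable, so is $\overline I$, and $S/(I+\ideal{x_l})\cong\bar S/\overline I$ as graded rings; iterating, the successive quotients by $x_l,x_{l-1},\dots$ correspond to strongly stable ideals in fewer variables. I will treat the two implications separately.

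The implication $(\Leftarrow)$ is just the definition unwound. Suppose $S/I$ has the SLP with $x_l$ and $\bar S/\overline I$ has the $(k{-}1)$-SLP with elements $\bar\ell_2,\dots,\bar\ell_k$. Lift each $\bar\ell_i$ to a linear form $\ell_i\in S_1$ and put $\ell_1:=x_l$. For $i=2,\dots,k$ one has $S/(I+\ideal{\ell_1,\dots,\ell_{i-1}})\cong(\bar S/\overline I)/\ideal{\bar\ell_2,\dots,\bar\ell_{i-1}}$, which has the SLP with $\bar\ell_i$; transporting this isomorphism, $S/(I+\ideal{\ell_1,\dots,\ell_{i-1}})$ has the SLP with $\ell_i$. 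Together with $S/I$ having the SLP with $\ell_1=x_l$, this is exactly the assertion that $(S/I,\ell_1,\dots,\ell_k)$ has the $k$-SLP.

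For $(\Rightarrow)$ I would argue by induction on $k$, proving simultaneously the a priori stronger fact that for every homogeneous ideal $J$ the ring $S/J$ has the $k$-SLP if and only if $S/\rgin(J)$ does; for $k=1$ these are precisely Lemma~\ref{lemma:SSWLPx_l} and Proposition~\ref{prop:ILPginLP}. Assume both statements for $k-1$ and let $S/I$ have the $k$-SLP. Then $S/I$ has the SLP, hence by Lemma~\ref{lemma:SSWLPx_l} the SLP with $x_l$, and it remains to show that $\bar S/\overline I$ has the $(k{-}1)$-SLP. Using that a $k$-SLP can be realized by a general flag of linear forms — a semicontinuity argument on the Zariski-open locus where the relevant Hilbert functions attain their generic value, cf.\ \cite{harima2013lefschetz, harima2009generic} — I would fix a general $\ell_1\in S_1$ such that $S/(I+\ideal{\ell_1})$ has the $(k{-}1)$-SLP, and choose $g\in\GL(l)$ with $g(\ell_1)=x_l$; since $\ell_1$ is general, $g$ may also be taken so that $\LT_{\DegRevLex}(gI)=\rgin(I)=I$ (a strongly stable ideal being its own generic initial ideal). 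As $g(I+\ideal{\ell_1})=gI+\ideal{x_l}$, the ring $\bar S/\overline{gI}$ inherits the $(k{-}1)$-SLP, where $\overline{gI}$ is a general hyperplane section of $I$.

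Transporting this back to $\overline I$ is the main obstacle. Since $gI$ is in generic coordinates, Green's restriction theorem for generic initial ideals gives $\rgin_{\bar S}(\overline{gI})=\overline{\rgin(I)}=\overline I$ (equivalently, one checks that for general $\ell$ the ideal $\rgin(I+\ideal{\ell})$ is a strongly stable ideal $\Gamma$ with $S/\Gamma\cong\bar S/\overline I$). The inductive gin-invariance of the $(k{-}1)$-SLP, applied to $\overline{gI}$ in $\bar S$, then shows that $\bar S/\overline I=\bar S/\rgin_{\bar S}(\overline{gI})$ has the $(k{-}1)$-SLP, that is, $S/(I+\ideal{x_l})$ has the $(k{-}1)$-SLP. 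Finally, the gin-invariance of the $k$-SLP follows for this $k$ by applying what has just been proved to the strongly stable ideal $\rgin(J)$, together with Proposition~\ref{prop:ILPginLP} and the case $k-1$; this closes the induction. I expect the two delicate ingredients to be the realization of a $k$-SLP by a general flag, and the compatibility of $\rgin$ with a general hyperplane section — strong stability of $I$ being precisely what forces the coordinate hyperplane $\{x_l=0\}$ to behave like a general one.
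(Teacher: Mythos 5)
Your argument is correct in outline, but it takes a much heavier route than the paper's, and two of its ingredients need more care. The paper disposes of $(\Rightarrow)$ with a single change of coordinates: since $I$ is strongly stable, hence Borel-fixed (Remark~\ref{rem:ss=borelfixed}), the upper-triangular matrices fixing $x_1,\dots,x_{l-1}$ and sending $x_l$ to any linear form with nonzero $x_l$-coefficient stabilize $I$; applying such a $\varphi$ with $\varphi(\ell_1)=x_l$ to a generic Lefschetz tuple turns $(S/I,\ell_1,\dots,\ell_k)$ into $(S/I,x_l,\varphi(\ell_2),\dots,\varphi(\ell_k))$, and the claim is immediate --- no induction on $k$, no passage to $\bar S$, and no generic initial ideal of a hyperplane section. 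Your induction instead rests on two facts you assert rather than prove: (i) that when the $k$-SLP holds for some tuple it holds for one whose first entry is a \emph{general} linear form --- this is the same genericity the paper itself invokes, but in the non-Artinian setting one should note that only degrees up to $\reg(I)$ impose conditions, so the relevant locus is a finite intersection of open sets; and (ii) the identity $\rgin_{\bar S}(\overline{gI})=\overline{\rgin(I)}$. Point (ii) is in fact true for $\DegRevLex$ (it follows from $\LT_{\DegRevLex}(J+\ideal{x_l})=\LT_{\DegRevLex}(J)+\ideal{x_l}$ plus a genericity argument in the pair $(h,g)$ of coordinate changes), but it is not what is usually called Green's restriction theorem (that is the Macaulay-type bound on Hilbert functions of hyperplane sections), and it is strictly stronger than the Hilbert-function equality of Conca's Lemma~1.2 that the paper uses: two strongly stable ideals of $\bar S$ with the same Hilbert function need not coincide once $l-1\ge 3$, so you cannot get (ii) from Hilbert functions alone and must prove or precisely cite it. Finally, your simultaneous induction also re-proves the gin-invariance of the $k$-SLP, which in the paper is Theorem~\ref{theo:kSLPtorgin} and is deduced \emph{from} Proposition~\ref{prop:inductionkwlp}; as sketched your induction is not circular (the base case is Lemma~\ref{lemma:SSWLPx_l} and Proposition~\ref{prop:ILPginLP}), but the paper's order of deduction is the reverse of yours and is considerably shorter. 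Your $(\Leftarrow)$ direction coincides with the paper's.
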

\begin{proof} Clearly, if $S/I$ has the SLP (respectively the WLP) with Lefschetz element $x_l$ and $S/(I+\ideal{x_l})$ has the $(k{-}1)$-SLP (respectively the $(k{-}1)$-WLP), then $S/I$ has the $k$-SLP (respectively the $k$-WLP).

Assume that $(S/I,\ell_1,\dots,\ell_k)$ has the $k$-SLP (respectively the $k$-WLP). By Remark~\ref{rem:ss=borelfixed}, the subgroup $\mathcal{H}\subset\GL(l)$ composed of all the matrices of the form
\begin{center}
$$\begin{pmatrix}
I_{l-1} & a  \\
0 & b \\
\end{pmatrix}$$
\end{center}
where $a\in \KK^{l-1}$ and $b\in\KK\setminus\{0\}$ stabilizes any strongly stable ideal.
Since the elements $\ell_1,\dots,\ell_k$ are generic, we can assume that $\ell_1=\sum_{j=1}^l\alpha_jx_j$ and $\alpha_l\ne0$.
This implies that there exists $\varphi\in \mathcal{H}$ such that $\varphi(\ell_1)=x_l$. By applying $\varphi$ to $(S/I,\ell_1,\dots,\ell_k)$,
we obtain that $(S/\varphi(I),\varphi(\ell_1),\dots,\varphi(\ell_k))=(S/I,x_l,\varphi(\ell_2),\dots,\varphi(\ell_k))$ has the $k$-SLP (respectively the $k$-WLP), and hence we obtain the claimed equivalence.
\end{proof}

\begin{Proposition}\label{prop:kwlplastvarok} Let $I$ be a strongly stable ideal of $S$ and $1\le k\le l$. $S/I$ has the $k$-SLP (respectively the $k$-WLP) if and only if $(S/I, x_l,\dots,x_{l-k+1})$ has the $k$-SLP (respectively $k$-WLP).
\end{Proposition}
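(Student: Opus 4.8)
The plan is to argue by induction on $k$, treating the SLP and WLP cases in parallel since every step below applies verbatim to both. The base case $k=1$ is exactly Lemma~\ref{lemma:SSWLPx_l}: for a strongly stable ideal $I$, the ring $S/I$ has the SLP (respectively WLP) if and only if it has the SLP (respectively WLP) with Lefschetz element $x_l$, and the latter is by definition the statement that $(S/I,x_l)$ has the $1$-SLP (respectively $1$-WLP).

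For the inductive step I would first record the bookkeeping fact, read off directly from Definition~\ref{def:klefsprop}, that for linear forms $\ell_1,\dots,\ell_k$ the tuple $(S/I,\ell_1,\dots,\ell_k)$ has the $k$-SLP (respectively $k$-WLP) if and only if $S/I$ has the SLP (respectively WLP) with Lefschetz element $\ell_1$ and the tuple $\bigl(S/(I+\ideal{\ell_1}),\ell_2,\dots,\ell_k\bigr)$ has the $(k{-}1)$-SLP (respectively $(k{-}1)$-WLP); this is just the observation that quotienting $S/I$ by $\ideal{\ell_1,\dots,\ell_{i-1}}$ in stages factors through $S/(I+\ideal{\ell_1})$. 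Next I would note that $S/(I+\ideal{x_l})\cong \bar S/\bar I$, where $\bar S=\KK[x_1,\dots,x_{l-1}]$ and $\bar I$ is the ideal of $\bar S$ obtained by deleting from the monomial generators of $I$ those divisible by $x_l$, and that $\bar I$ is again strongly stable in $\bar S$. Now applying Proposition~\ref{prop:inductionkwlp} to the strongly stable ideal $I\subseteq S$ gives: $S/I$ has the $k$-SLP (respectively $k$-WLP) if and only if $S/I$ has the SLP (respectively WLP) with Lefschetz element $x_l$ and $\bar S/\bar I$ has the $(k{-}1)$-SLP (respectively $(k{-}1)$-WLP). Since $k-1\le l-1$, the inductive hypothesis applies to $\bar I\subseteq\bar S$ and yields that $\bar S/\bar I$ has the $(k{-}1)$-SLP (respectively $(k{-}1)$-WLP) if and only if $\bigl(\bar S/\bar I,x_{l-1},\dots,x_{l-k+1}\bigr)$ does. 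Chaining these three equivalences together produces exactly the claim that $S/I$ has the $k$-SLP (respectively $k$-WLP) if and only if $(S/I,x_l,\dots,x_{l-k+1})$ has it.

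I do not expect a genuine obstacle here; the content is an induction built on Proposition~\ref{prop:inductionkwlp}. The only points requiring care are the two verifications that must be carried along the induction: that setting $x_l=0$ sends a strongly stable ideal to a strongly stable ideal of the smaller polynomial ring, so that both Proposition~\ref{prop:inductionkwlp} and the inductive hypothesis are applicable at each stage; and that the list of Lefschetz elements manufactured stage by stage is precisely $x_l,x_{l-1},\dots,x_{l-k+1}$ in that order, matching the tuple in the statement rather than some reordering.
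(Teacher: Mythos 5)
Your proposal is correct and follows essentially the same route as the paper: the paper also reduces via Proposition~\ref{prop:inductionkwlp} to the quotient $S/(I+\ideal{x_l})\cong\bar S/\bar I$ with $\bar I$ strongly stable in $\bar S=\KK[x_1,\dots,x_{l-1}]$ and then concludes by induction (phrased there as induction on $l$ rather than on $k$, which is the same induction). The bookkeeping points you flag, that $\bar I$ is again strongly stable and that the Lefschetz elements produced are $x_l,\dots,x_{l-k+1}$ in order, are exactly the content of the paper's argument.
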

\begin{proof} Clearly, if $(S/I, x_l,\dots,x_{l-k+1})$ has the $k$-SLP (respectively $k$-WLP), then $S/I$ has the $k$-SLP (respectively the $k$-WLP).

Assume that $S/I$ has the $k$-SLP (respectively the $k$-WLP). By Proposition~\ref{prop:inductionkwlp}, we can assume that $(S/I,x_l,\ell_2,\dots,\ell_k)$ has the $k$-SLP (respectively $k$-WLP). Let $\bar{S}=\KK[x_1,\dots,x_{l-1}]$ and $\bar{I}=\bar{S}\cap I$. Then $S/(I+\ideal{x_l})\cong \bar{S}/\bar{I}$ and $\bar{I}$ is a strongly stable ideal of $\bar{S}$. Therefore by induction on $l$, using Proposition~\ref{prop:inductionkwlp}, we obtain that we can take $\ell_2=x_{l-1},\dots, \ell_k=x_{l-k+1}$.
\end{proof}

\begin{Example} Let $I=\ideal{x^2,xy,xz}$ be a strongly stable ideal of $S=\mathbb{R}[x,y,z,w]$. Since I has no minimal generators divisible by $w$, $(S/I,w)$ has the $1$-WLP. The quotient $S/(I+\ideal{w})$ has an increasing Hilbert function. However, the multiplication map $\times z\colon (S/(I+\ideal{w}))_1\to(S/(I+\ideal{w}))_2$ is not injective, and hence $z$ is not a Lefschetz element for $S/(I+\ideal{w})$. By Proposition~\ref{prop:kwlplastvarok}, $S/I$ does not have the $2$-WLP.
\end{Example}

Similarly to Proposition 6.15 of \cite{harima2013lefschetz}, we can generalize Proposition~\ref{prop:ILPginLP} and reduce the study of $k$-Lefschetz properties to the strongly stable case.

\begin{Theorem}\label{theo:kSLPtorgin} Let $I$ be a homogeneous ideal of $S$ and $1\le k\le l$. Then the following two conditions are equivalent
\begin{enumerate}
\item $S/I$ has the $k$-SLP (respectively the $k$-WLP),
\item $(S/\rgin(I), x_l,\dots,x_{l-k+1})$ has the $k$-SLP (respectively the $k$-WLP).
\end{enumerate}
\end{Theorem}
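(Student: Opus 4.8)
The plan is to reduce the theorem to a $k$-th analogue of Proposition~\ref{prop:ILPginLP}. Since $\rgin(I)$ is strongly stable, Proposition~\ref{prop:kwlplastvarok} shows that condition~(2) is equivalent to the a priori weaker statement that $S/\rgin(I)$ has the $k$-SLP (resp.\ the $k$-WLP). So it suffices to prove that $S/I$ has the $k$-SLP (resp.\ the $k$-WLP) if and only if $S/\rgin(I)$ does.

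To do this I would first unwind what the $k$-SLP of the strongly stable ideal $\rgin(I)$ amounts to. By Proposition~\ref{prop:kwlplastvarok} and the definition of the $k$-SLP, $S/\rgin(I)$ has the $k$-SLP if and only if, for each $j=0,\dots,k-1$, the quotient $S/(\rgin(I)+\ideal{x_l,\dots,x_{l-j+1}})$ has the SLP with Lefschetz element $x_{l-j}$; and by Proposition~\ref{prop:SLPdiffHF} each such requirement is in turn a family of equalities relating the Hilbert function of $S/(\rgin(I)+\ideal{x_l,\dots,x_{l-j+1},x_{l-j}^{\,s}})$ to that of $S/(\rgin(I)+\ideal{x_l,\dots,x_{l-j+1}})$, for all $s\ge 1$. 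On the other hand, iterating the argument of the first paragraph of the proof of Proposition~\ref{prop:SLPdiffHF} --- which rests on the identity $\LT_{\DegRevLex}(g(J)+\ideal{x_l})=\LT_{\DegRevLex}(g(J))+\ideal{x_l}$, valid for general $g\in\GL(l)$ and borrowed from \cite{conca2003reduction} --- gives $\HF(S/(\rgin(I)+\ideal{x_l,\dots,x_{l-j+1}}))=\HF(S/(I+\ideal{\ell_1,\dots,\ell_j}))$ for $j=1,\dots,k$ and general linear forms $\ell_1,\dots,\ell_j$ (the case $j=0$ being Remark~\ref{rem:rginsameHFasideal}), and likewise with a power $\ell_{j+1}^{\,s}$ adjoined. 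Therefore the system of Hilbert-function equalities characterising the $k$-SLP of $S/\rgin(I)$ is, word for word, the system characterising --- via Proposition~\ref{prop:SLPdiffHF} read in the opposite direction, applied to $I,\ I+\ideal{\ell_1},\ \dots$ --- the statement that $(S/I,\ell_1,\dots,\ell_k)$ has the $k$-SLP for a general choice of $\ell_1,\dots,\ell_k$. It then remains to know that $S/I$ has the $k$-SLP if and only if a general sequence of linear forms is a Lefschetz sequence for it, after which the two conditions coincide. The $k$-WLP case is identical, with Proposition~\ref{prop:WLPdiffHF} replacing Proposition~\ref{prop:SLPdiffHF}.

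I expect the two genericity statements used at the end to carry the weight of the argument. The first --- the agreement of the Hilbert functions of $S/(I+\ideal{\ell_1,\dots,\ell_j})$ for general $\ell_i$ with those of $S/(\rgin(I)+\ideal{x_l,\dots,x_{l-j+1}})$ --- is the generic behaviour of initial ideals under successive hyperplane sections, and I would obtain it by applying the single-step identity above inductively, after a generic change of coordinates carrying the general forms $\ell_1,\ell_2,\dots$ to $x_l,x_{l-1},\dots$. The second --- that possessing the $k$-SLP is equivalent to a general sequence of linear forms witnessing it --- is a semicontinuity argument: on the locally closed set of tuples $(\ell_1,\dots,\ell_j)$ for which $\ell_1,\dots,\ell_{j-1}$ already witness the $(j-1)$-SLP, the relevant graded pieces of $S/(I+\ideal{\ell_1,\dots,\ell_{j-1}})$ have locally constant dimension, so full-rankness of multiplication by $\ell_j^{\,s}$ is an open condition there; being non-empty by hypothesis, the locus of Lefschetz sequences is then dense. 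An alternative, closer to Proposition~6.15 of \cite{harima2013lefschetz}, is to induct on $k$: use Proposition~\ref{prop:inductionkwlp} to split off the first Lefschetz element $x_l$ (matched with the SLP of $S/I$ via Lemma~\ref{lemma:SSWLPx_l} and Proposition~\ref{prop:ILPginLP}), and then the compatibility of $\rgin$ with a general hyperplane section to descend the remaining $(k-1)$-SLP to $\KK[x_1,\dots,x_{l-1}]$ and apply the inductive hypothesis there; the bookkeeping is the same.
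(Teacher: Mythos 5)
Your argument is essentially the paper's own: reduce condition (2) to ``$S/\rgin(I)$ has the $k$-SLP'' via Proposition~\ref{prop:kwlplastvarok}, translate both sides into Hilbert-function equalities through Propositions~\ref{prop:SLPdiffHF}/\ref{prop:WLPdiffHF} and \ref{prop:inductionkwlp}, and match them using the Conca-type identity $\HF(S/(I+\ideal{\ell_1,\dots,\ell_{j-1},\ell_j^s}))=\HF(S/(\rgin(I)+\ideal{x_l,\dots,x_{l-j+2},x_{l-j+1}^s}))$ for general linear forms. The only difference is that you spell out the genericity/semicontinuity step (that a general sequence witnesses the $k$-SLP whenever some sequence does), which the paper leaves implicit in its citations; your treatment of it is correct.
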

\begin{proof} We first show that the two conditions are equivalent for the $k$-WLP. Let $1\le j\le k$. Lemma~1.2 of \cite{conca2003reduction} shows that the Hilbert function of $S/(\rgin(I)+\ideal{x_l, \dots, x_{l-j+1}})$ is equal to the
Hilbert function of $S/(I+\ideal{\ell_1,\dots, \ell_j})$ for a general linear form $\ell_1,\dots, \ell_j \in S_1$. This fact, together with Propositions~\ref{prop:WLPdiffHF}, \ref{prop:inductionkwlp} and \ref{prop:kwlplastvarok}, gives us the equivalence between the two conditions for the $k$-WLP.

We now show the equivalence for the $k$-SLP. Let $1\le j\le k$. Similarly to the proof of Proposition~\ref{prop:SLPdiffHF}, by modifying the proof of Lemma~1.2 of \cite{conca2003reduction}, we obtain  the equality between the Hilbert function of $S/(\rgin(I)+\ideal{x_l, \dots, x_{l-j+2}, x_{l-j+1}^s})$ and the
Hilbert function of $S/(I+\ideal{\ell_1,\dots, \ell_{j-1}, \ell_j^s})$ for general linear forms $\ell_1,\dots, \ell_j \in S_1$ and $s\ge1$. This fact, together with Propositions~\ref{prop:SLPdiffHF}, \ref{prop:inductionkwlp} and \ref{prop:kwlplastvarok}, gives us the equivalence between the two conditions for the $k$-SLP.
\end{proof}


\section{Almost revlex ideals and Lefschetz properties}

In this section, we recall the notion of almost revlex ideal, a special class of monomial ideals, and we put it in connection with the $k$-Lefschetz properties.

\begin{Definition}\label{def:ARL} 
A monomial ideal $I$ of $S$ is called an \textbf{almost revlex ideal}, if for any power-product $t$ in the minimal generating set of $I$, every other power-product $t'$ of $S$ with $\deg(t')=\deg(t)$ and $t'>_{\DegRevLex}t$ belongs to the ideal $I$.
\end{Definition}

\begin{Remark}\label{rem:almostimpliesstrongstable}
Every almost revlex ideal is strongly stable.
\end{Remark}

In general, not all strongly stable ideals are almost revlex ideals.
\begin{Example}
Consider the ideal $I=\ideal{x^3, x^2y, xy^2, xyz}$ in $\mathbb{R}[x,y,z]$ of Example~\ref{ex:nonss-ssideal}. As seen before, it is not strongly stable, and hence it is not almost revlex. On the other hand,  also the strongly stable ideal $J=I+\ideal{x^2z}$ is not almost revlex.
In fact, $xyz\in J$, but $y^3\notin J$. If we consider the ideal $J+\ideal{y^3}$, finally, this is an almost revlex ideal. 
\end{Example}

\begin{Remark}\label{rem:ARLsameHFsameideal} 
If two almost revlex ideals have the same Hilbert function, then they coincide.
\end{Remark}

If we assume that $S=\KK[x,y]$, then all strongly stable ideals are almost revlex ideals.
\begin{Lemma}\label{lemma:ssimpliesalmostrevlex} Let $I$ be a strongly stable ideal of $S=\KK[x,y]$. Then $I$ is an almost revlex ideal and it is uniquely determined by the Hilbert function.
\end{Lemma}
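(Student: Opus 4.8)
The plan is to exploit the extremely rigid combinatorial structure of strongly stable (= Borel-fixed, since $\operatorname{char}\KK=0$) ideals in just two variables. In $S=\KK[x,y]$ with $x>_{\DegRevLex}y$, a strongly stable ideal $I$ has the property that whenever $x^ay^b\in I$ and $a\ge 1$, also $x^{a+1}y^{b-1}\in I$; equivalently, in each degree $d$ the set $I_d$ of monomials is an ``up-set'' for the order $y^d<xy^{d-1}<\dots<x^d$. So $I_d$ is determined by a single threshold: there is an integer $c_d$ with $0\le c_d\le d+1$ such that $I_d=\{x^a y^{d-a}\mid a\ge d+1-c_d\}$, i.e. $I_d$ consists of the top $c_d$ monomials of degree $d$. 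The first step is to record this and note that $c_d=\dim_\KK I_d = (d+1)-\HF(S/I,d)$ is read off directly from the Hilbert function.

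Next I would show $I$ is almost revlex. Take a minimal generator $t=x^a y^b$ of $I$, of degree $d=a+b$. Any other monomial $t'$ of degree $d$ with $t'>_{\DegRevLex}t$ is of the form $x^{a'}y^{d-a'}$ with $a'>a$; by strong stability (applied repeatedly, raising the $x$-exponent) $t'\in I$. This is exactly the almost revlex condition, so Remark~\ref{rem:almostimpliesstrongstable}'s converse holds in two variables.

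For uniqueness by the Hilbert function, the cleanest route is to invoke Remark~\ref{rem:ARLsameHFsameideal} directly: two strongly stable ideals in $\KK[x,y]$ are almost revlex by the previous paragraph, so if they share a Hilbert function they coincide. Alternatively, and more self-containedly, one can argue by hand: the Hilbert function of $S/I$ determines $c_d=\dim_\KK I_d$ for every $d$, and by the up-set description $I_d$ is then forced to be exactly the top $c_d$ monomials in degree $d$; hence $I$ itself is determined. One should also check the elementary consistency condition $c_{d+1}\ge c_d$ (which holds because $I$ is an ideal: multiplying the top $c_d$ monomials of degree $d$ by $x$ gives $c_d$ of the top monomials of degree $d+1$), but this is automatic and needs no separate verification once we know $I$ is an ideal.

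I do not expect a genuine obstacle here: the whole statement is a direct unwinding of the definition of strong stability specialized to two variables, and the ``up-set with a single threshold'' picture makes both the almost-revlex claim and the Hilbert-function rigidity immediate. The only point requiring a line of care is phrasing the passage from ``$t'>_{\DegRevLex}t$ in fixed degree'' to ``$t'$ has strictly larger $x$-exponent,'' which is just the observation that in a single degree $\DegRevLex$ on $\KK[x,y]$ orders monomials by $x$-exponent.
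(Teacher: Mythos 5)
Your proposal is correct and follows essentially the same route as the paper: the key observation that in $\KK[x,y]$ any degree-$d$ monomial $t'>_{\DegRevLex}t$ equals $x^\alpha t/y^\alpha$, so strong stability forces $t'\in I$, and uniqueness then follows from Remark~\ref{rem:ARLsameHFsameideal}. The ``up-set with a single threshold'' framing and the alternative self-contained uniqueness argument are fine extras but not needed.
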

\begin{proof} Since $S=\KK[x,y]$, we have that if $t$ and $t'$ are two power-product in $S$ such that $\deg(t')=\deg(t)$ and $t'>_{\DegRevLex}t$, then $t'=x^\alpha t /y^\alpha$ for some $\alpha\ge0$. By Definitions~\ref{def:stronstableideal} and \ref{def:ARL}, this clearly implies that every strongly stable ideal is an almost revlex ideal.
 
Finally, $I$ is determined only by the Hilbert function, by Remark~\ref{rem:ARLsameHFsameideal}.
\end{proof}


Almost revlex ideals have several interesting properties, as described in \cite{harima2013lefschetz}, \cite{bertone2019almost} and \cite{palezzato2019lefschetz}. The following result add the $l$-SLP to the list.

\begin{Theorem}\label{theo:almrevlexlSLP} Let $I$ be an almost revlex ideal of $S$. Then $(S/I, x_l,\dots, x_1)$ has the $l$-SLP.
\end{Theorem}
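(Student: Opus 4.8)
The plan is to proceed by induction on the number of variables $l$, using the reduction machinery for $k$-Lefschetz properties that has been established in Propositions~\ref{prop:inductionkwlp} and \ref{prop:kwlplastvarok}, together with the key structural feature of almost revlex ideals recorded in Remark~\ref{rem:ARLsameHFsameideal}. The base case $l=1$ is immediate since every ideal in $\KK[x_1]$ is of the form $\ideal{x_1^n}$ and quotients of $\KK[x_1]$ trivially have the SLP. For the inductive step, by Remark~\ref{rem:almostimpliesstrongstable} the ideal $I$ is strongly stable, so by Proposition~\ref{prop:kwlplastvarok} it suffices to show that $S/I$ has the $l$-SLP at all, and by Proposition~\ref{prop:inductionkwlp} this amounts to two tasks: first, that $S/I$ has the SLP with Lefschetz element $x_l$; and second, that $S/(I+\ideal{x_l}) \cong \bar S/\bar I$ has the $(l-1)$-SLP, where $\bar S = \KK[x_1,\dots,x_{l-1}]$ and $\bar I = \bar S \cap I$.

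For the second task, the crucial observation — which I expect to be the technical heart of the argument — is that if $I$ is almost revlex in $S$, then $\bar I = \bar S \cap I$ is almost revlex in $\bar S$. One should verify this directly from Definition~\ref{def:ARL}: a minimal generator $t$ of $\bar I$ is a power-product in $x_1,\dots,x_{l-1}$, and for any power-product $t'$ in $\bar S$ with $\deg t' = \deg t$ and $t' >_{\DegRevLex} t$, one needs $t' \in \bar I$; since the DegRevLex order on $\bar S$ is the restriction of that on $S$ (with $x_l$ the smallest variable) and $t' \in S$ sits above $t \in S$ in DegRevLex, almost revlexness of $I$ gives $t' \in I$, hence $t' \in \bar S \cap I = \bar I$. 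One should also check $t$ is a minimal generator of $\bar I$ — or more carefully, that every minimal generator of $\bar I$ arises from one of $I$ not divisible by $x_l$, which follows because $\bar I = I \cap \bar S$ and strong stability controls the generators. Granting this, the inductive hypothesis applied to $\bar I$ in $\bar S$ yields that $(\bar S/\bar I, x_{l-1},\dots,x_1)$ has the $(l-1)$-SLP, completing the second task.

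For the first task, that $S/I$ has the SLP with Lefschetz element $x_l$, I would use Proposition~\ref{prop:SLPdiffHF}: it suffices to check that $\HF(S/(I+\ideal{x_l^s}),d) = \max\{\HF(S/I,d)-\HF(S/I,d-s),0\}$ for all $s\ge 1$ and $d\ge 0$. Here the defining property of almost revlex ideals enters decisively. The point is that for an almost revlex ideal, in each degree $d$ the monomials of $I_d$ form a DegRevLex-initial segment once one accounts for the "staircase" coming from lower-degree generators; more precisely, one can exploit that $I + \ideal{x_l^s}$ is again (close to) almost revlex, or argue combinatorially that the monomials $u \in I_d$ with $x_l^s \nmid u$ together with $x_l^s \cdot (\text{monomials of } S_{d-s} \setminus I_{d-s})$ exhaust exactly the right count. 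Concretely, the multiplication $\times x_l^s$ on $S/I$ fails to be full rank only if there is a monomial $t \notin I_{d-s}$ with $x_l^s t \in I_d$ while simultaneously some monomial of degree $d$ not in $I_d$ fails to lift; the almost revlex condition rules this out because $x_l^s t \in I_d$ forces, via repeated application of the definition comparing $x_l^s t$ with monomials obtained by trading $x_l$'s for larger variables, that $t$ itself must already lie in $I_{d-s}$. I would phrase this as: if $x_l^s t \in I$ and $x_l^s t$ has a minimal generator of $I$ dividing it, then by the almost revlex property all DegRevLex-larger monomials of that generator's degree are in $I$, and descending this shows $t \in I$; the boundary case where $x_l^s t$ is itself of minimal generator degree is handled the same way. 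The remaining obstacle is purely bookkeeping: assembling these monomial-counting observations into the exact equality of Hilbert functions demanded by \eqref{eq:SLPHilbertdiffequal}, uniformly in $s$ and $d$.

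One clean alternative for the first task, which may be worth invoking to shorten the argument, is to note that by Lemma~\ref{lemma:SSWLPx_l} it is enough to show $S/I$ has the SLP, and then combine Proposition~\ref{prop:ILPginLP} with the fact (from \cite{palezzato2019lefschetz} or \cite{bertone2019almost}) that almost revlex ideals are their own reverse-lexicographic generic initial ideals and already known to enjoy the SLP in the relevant setting; if such a statement is available it can be cited directly, reducing the whole proof to the inductive propagation handled above. In either presentation, the main obstacle is the monomial combinatorics establishing the SLP of $S/I$ with element $x_l$ from the almost revlex hypothesis; everything else is a formal induction driven by Propositions~\ref{prop:inductionkwlp} and \ref{prop:kwlplastvarok} and the stability of the almost revlex property under the section $\bar I = \bar S \cap I$.
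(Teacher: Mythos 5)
Your proposal matches the paper's argument: the paper also proceeds by induction on $l$ via Proposition~\ref{prop:inductionkwlp}, using exactly the two facts you identify, namely that $I\cap\KK[x_1,\dots,x_{l-1}]$ is again almost revlex and that $S/I$ has the SLP with Lefschetz element $x_l$. For the latter the paper simply cites Corollary~5.7 of \cite{palezzato2019lefschetz} — precisely your ``clean alternative'' — which is the safer route, since your direct combinatorial sketch leans on the implication $x_l^s t\in I\Rightarrow t\in I$, which does not hold unconditionally and must be restricted to the degrees where injectivity (rather than surjectivity) is required.
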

\begin{proof} By Corollary~5.7 from \cite{palezzato2019lefschetz}, $S/I$ has the SLP with Lefschetz element $x_l$. Since $I$ is an almost revlex ideal, then $I\cap\KK[x_1,\dots,x_{l-1}]$ is an almost revlex ideal. Since every almost revlex ideal is strongly stable by Remark~\ref{rem:almostimpliesstrongstable}, we conclude by Proposition~\ref{prop:inductionkwlp} and induction on $l$.
\end{proof}


Similarly to Proposition 3.15 of \cite{harima2013lefschetz}, if $l=2$, then $S/I$ has always the $2$-SLP.
\begin{Theorem}\label{theo:SLP2dim} Let $I$ be a homogeneous ideal of $S=\KK[x,y]$. Then $S/I$ has the $2$-SLP.
\end{Theorem}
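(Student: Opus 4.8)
The plan is to reduce to the strongly stable case via the generic initial ideal and then exploit the fact, already established in Lemma~\ref{lemma:ssimpliesalmostrevlex}, that every strongly stable ideal in $\KK[x,y]$ is almost revlex. First I would invoke Theorem~\ref{theo:kSLPtorgin} with $l=2$ and $k=2$: the graded algebra $S/I$ has the $2$-SLP if and only if $(S/\rgin(I), x_2, x_1)$ has the $2$-SLP. Since $\rgin(I)$ is a strongly stable ideal of $\KK[x,y]$, Lemma~\ref{lemma:ssimpliesalmostrevlex} tells us it is in fact an almost revlex ideal. Now Theorem~\ref{theo:almrevlexlSLP} applies directly in the two-variable case: for an almost revlex ideal $J$ of $S=\KK[x_1,x_2]$, the tuple $(S/J, x_2, x_1)$ has the $l$-SLP, i.e.\ the $2$-SLP. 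Applying this to $J=\rgin(I)$ gives that $(S/\rgin(I), x_2, x_1)$ has the $2$-SLP, and then Theorem~\ref{theo:kSLPtorgin} transports this conclusion back to $S/I$.

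Alternatively, and perhaps more self-containedly, I could argue by unwinding the definition of $2$-SLP directly. By Proposition~\ref{prop:inductionkwlp} (after replacing $I$ by $\rgin(I)$, which is strongly stable), it suffices to show that $S/\rgin(I)$ has the SLP with Lefschetz element $x_2$ and that $S/(\rgin(I)+\ideal{x_2})$ has the $1$-SLP. The first part is Corollary~5.7 of \cite{palezzato2019lefschetz} (a strongly stable, hence almost revlex in two variables, ideal has the SLP with Lefschetz element $x_l$). For the second part, $S/(\rgin(I)+\ideal{x_2})$ is a quotient of $\KK[x_1]$, i.e.\ either $\KK[x_1]$ itself or a truncation $\KK[x_1]/\ideal{x_1^m}$; in either case every multiplication map $\times x_1^s$ between consecutive-or-spaced graded pieces is manifestly full-rank, so it trivially has the $1$-SLP. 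Combining gives the $2$-SLP for $S/\rgin(I)$, and then Proposition~\ref{prop:ILPginLP} together with the reduction Theorem~\ref{theo:kSLPtorgin} yields the claim for $S/I$.

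The only genuine point requiring care is the passage from $S/I$ to $S/\rgin(I)$: one must make sure the $k$-SLP is exactly the invariant controlled by Theorem~\ref{theo:kSLPtorgin}, and that no Artinian hypothesis is secretly needed there. Since the excerpt proves Theorem~\ref{theo:kSLPtorgin} for arbitrary homogeneous ideals, this is available. I expect the main (mild) obstacle to be purely bookkeeping: checking that when $l=2$ the chain of hyperplane sections in Theorem~\ref{theo:kSLPtorgin} really does terminate after $k=2$ steps with a quotient of $\KK[x_1]$, and that Theorem~\ref{theo:almrevlexlSLP} is being applied with the ambient polynomial ring having exactly two variables so that ``$l$-SLP'' means ``$2$-SLP''. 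No substantial new idea is needed; the theorem is essentially a corollary of the almost-revlex machinery specialized to two variables.
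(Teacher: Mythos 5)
Your proposal is correct and follows essentially the same route as the paper: reduce to the strongly stable ideal $\rgin(I)$, observe via Lemma~\ref{lemma:ssimpliesalmostrevlex} that in two variables it is almost revlex, and conclude with Theorem~\ref{theo:almrevlexlSLP}. If anything, your use of Theorem~\ref{theo:kSLPtorgin} for the reduction is slightly more careful than the paper's citation of Proposition~\ref{prop:ILPginLP}, which only addresses the $1$-Lefschetz properties.
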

\begin{proof} By Proposition~\ref{prop:ILPginLP}, it is enough to prove the statement when $I$ is a strongly stable ideal. By Lemma~\ref{lemma:ssimpliesalmostrevlex}, if $I$ is a strongly stable ideal, then it is an almost revlex ideal, and hence by Theorem~\ref{theo:almrevlexlSLP}, $S/I$ has the $2$-SLP.
\end{proof}

\begin{Remark} As noted in Remark~3.3 from \cite{harima2013lefschetz}, Theorem~\ref{theo:SLP2dim} is false if we do not assume that $S$ has standard grading.
\end{Remark}

\begin{Corollary}\label{cor:l-2slptolslp} Let $I$ be a homogeneous ideal of $S$. Then the following facts are equivalent
\begin{enumerate}
\item $S/I$ has the $l$-SLP (respectively the $l$-WLP),
\item $S/I$ has the $(l{-}1)$-SLP (respectively the $(l{-}1)$-WLP),
\item $S/I$ has the $(l{-}2)$-SLP (respectively the $(l{-}2)$-WLP).
\end{enumerate}
\end{Corollary}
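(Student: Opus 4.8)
My plan is to pass to the generic initial ideal, unfold the $k$-SLP (resp.\ $k$-WLP) into a chain of single-variable SLP (resp.\ WLP) conditions, and then check that the last two links of that chain are automatic.

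First I would dispose of the implications $(1)\Rightarrow(2)\Rightarrow(3)$, which are immediate from the definition: if $(S/I,\ell_1,\dots,\ell_k)$ has the $k$-SLP (resp.\ $k$-WLP), then $(S/I,\ell_1,\dots,\ell_{k-1})$ has the $(k-1)$-SLP (resp.\ $(k-1)$-WLP). It then remains to prove $(3)\Rightarrow(1)$. By Theorem~\ref{theo:kSLPtorgin}, and since $\rgin(I)$ is strongly stable, $S/I$ has the $m$-SLP (resp.\ $m$-WLP) if and only if $S/\rgin(I)$ does, for every $m$; so I may assume $I$ is strongly stable, and also that $l\ge 3$, the remaining cases being subsumed by Theorem~\ref{theo:SLP2dim}. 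For $1\le m\le l$ set $S^{(m)}=\KK[x_1,\dots,x_m]$ and $I^{(m)}=I\cap S^{(m)}$; then $I^{(m)}$ is again strongly stable and $S/(I+\ideal{x_{m+1},\dots,x_l})\cong S^{(m)}/I^{(m)}$.

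Next I would iterate Proposition~\ref{prop:inductionkwlp}, applying it in turn to the strongly stable ideals $I^{(l)}, I^{(l-1)}, \dots$ inside $S^{(l)}, S^{(l-1)}, \dots$. This yields: for $1\le k\le l$, the algebra $S/I$ has the $k$-SLP (resp.\ $k$-WLP) if and only if, for every $j=0,1,\dots,k-1$, the algebra $S^{(l-j)}/I^{(l-j)}$ has the SLP (resp.\ WLP) with Lefschetz element $x_{l-j}$. In particular, the conditions characterizing the $(l-1)$-SLP (resp.\ $(l-1)$-WLP) are exactly those characterizing the $(l-2)$-SLP (resp.\ $(l-2)$-WLP), together with the one extra condition that $S^{(2)}/I^{(2)}$ have the SLP (resp.\ WLP) with Lefschetz element $x_2$; and the conditions for the $l$-SLP (resp.\ $l$-WLP) add on top of that the condition that $S^{(1)}/I^{(1)}$ have the SLP (resp.\ WLP) with Lefschetz element $x_1$.

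Finally I would observe that both extra conditions are automatic. The one on $S^{(1)}/I^{(1)}$ is trivial, since $S^{(1)}/I^{(1)}$ is a graded quotient of $\KK[x_1]$: all its graded components have dimension at most one, so every multiplication map by a power of $x_1$ has full rank. The one on $S^{(2)}/I^{(2)}$ follows from Theorem~\ref{theo:SLP2dim}, which gives that $S^{(2)}/I^{(2)}$ has the $2$-SLP (hence the SLP), combined with Lemma~\ref{lemma:SSWLPx_l}, which — as $I^{(2)}$ is strongly stable — lets us take $x_2$ as a strong (and therefore weak) Lefschetz element. Hence the three lists of conditions coincide, establishing $(3)\Rightarrow(1)$. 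I expect the only real obstacle to be the bookkeeping in the iteration of Proposition~\ref{prop:inductionkwlp}: one must verify at each step that $I^{(m)}$ remains strongly stable, that $S/(I+\ideal{x_{m+1},\dots,x_l})\cong S^{(m)}/I^{(m)}$, and that the Lefschetz elements produced are $x_l, x_{l-1}, \dots$ precisely in this order — so that the gap between $l-2$ and $l$ is accounted for exactly by the $\KK[x_1,x_2]$- and $\KK[x_1]$-quotients, for which the SLP (resp.\ WLP) holds for free.
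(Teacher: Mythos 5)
Your proposal is correct and follows essentially the same route as the paper: the implications $(1)\Rightarrow(2)\Rightarrow(3)$ are immediate, and $(3)\Rightarrow(1)$ comes down to the fact that the last two links of the Lefschetz chain live in quotients of $\KK[x_1,x_2]$ and $\KK[x_1]$, where the SLP/WLP is automatic by Theorem~\ref{theo:SLP2dim}. The paper states this in one line by invoking the definition of the $k$-Lefschetz properties together with Theorem~\ref{theo:SLP2dim}; your detour through $\rgin(I)$ and the iteration of Proposition~\ref{prop:inductionkwlp} is just a more explicit (and in fact more careful) bookkeeping of the same argument.
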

\begin{proof} Clearly (1) implies (2) and (2) implies (3). On the other hand, by the definition of $k$-Lefschetz properties and Theorem~\ref{theo:SLP2dim}, we get that (3) implies (1).
\end{proof}

\section{Regularity and $k$-Lefschetz properties}

To connect the Artianian case and non-Artinian one, a key role is played by the regularity of a homogeneous ideal. 

\begin{Definition}\label{def:regularity}
Let $I$ be a homogeneous ideal of $S$. 
The \textbf{Castelnuovo-Mumford regularity} of $I$, denoted~{\boldmath$\reg(I)$},
is the maximum of the numbers $d_i-i$,
where  $d_i = \max\{j\mid \beta_{i,j}(I)\ne0\}$ and $\beta_{i,j}(I)$ are the graded  
Betti numbers of $I$.
\end{Definition}

In \cite{bayer1987criterion}, the authors described the connection between the Castelnuovo-Mumford regularity of an ideal and the maximal degree of the minimal generators of its generic initial ideal.
\begin{Theorem}[\cite{bayer1987criterion}]\label{theo:regginequalideal} 
Let $I$ be a homogeneous ideal of $S$. Then $\reg(I)=\reg(\rgin(I))$. 
Moreover, if $I$ is a strongly stable ideal, then $\reg(I)$ is the highest degree of a minimal generator of $I$.
\end{Theorem}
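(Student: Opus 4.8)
The statement is classical and I would deduce it from two standard ingredients, handling the strongly stable case first.

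\emph{The strongly stable case.} Suppose $I$ is strongly stable with minimal monomial generating set $G(I)$, and put $d=\max\{\deg u\mid u\in G(I)\}$. The plan is to invoke the Eliahou--Kervaire minimal free resolution of $S/I$: for a strongly stable ideal one has the explicit formula
\[
\beta_{i,\,i+j}(I)=\sum_{\substack{u\in G(I)\\ \deg u=j}}\binom{\max(u)-1}{i},
\]
where $\max(u)$ denotes the largest index $k$ with $x_k\mid u$. Reading off this formula, $\beta_{i,i+j}(I)\ne0$ forces $j\le d$, so $\reg(I)\le d$; and taking $i=0$ and $u$ a minimal generator of top degree $d$ gives $\beta_{0,d}(I)\ne0$, so $d\le \reg(I)$. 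Hence $\reg(I)=d$. Alternatively one can run an induction on $l$ via the short exact sequence
\[
0\longrightarrow \bigl(S/(I:x_l)\bigr)(-1)\xrightarrow{\ \cdot x_l\ } S/I\longrightarrow S/(I+\ideal{x_l})\longrightarrow 0,
\]
together with the facts that $I+\ideal{x_l}$ restricts to a strongly stable ideal of $\KK[x_1,\dots,x_{l-1}]$ and that $I:x_l$ is again strongly stable with no larger generating degrees; but the Eliahou--Kervaire route is the cleanest.

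\emph{The general case.} For arbitrary homogeneous $I$, the inequality $\reg(I)\le\reg(\rgin(I))$ is the easy half: passing to an initial ideal can only make the graded Betti numbers go up (upper semicontinuity of $\beta_{i,j}$ under Gr\"obner degeneration), so $\beta_{i,j}(I)\le\beta_{i,j}\bigl(\LT_{\DegRevLex}(g(I))\bigr)=\beta_{i,j}(\rgin(I))$ for generic $g\in\GL(l)$, whence $\reg(I)\le\reg(\rgin(I))$. For the reverse inequality I would use the Bayer--Stillman regularity criterion: in generic coordinates, $m$-regularity of $I$ is equivalent to a list of conditions asserting that in all degrees $\ge m$ the variables $x_l,x_{l-1},\dots$ behave like a regular sequence modulo $I$, namely $(I+\ideal{x_l,\dots,x_{l-j+1}}):x_{l-j}$ agrees with $I+\ideal{x_l,\dots,x_{l-j+1}}$ in degrees $\ge m$ for every $j$, and $(I+\ideal{x_l,\dots,x_1})_{\ge m}=S_{\ge m}$. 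Since $\rgin(I)$ is Borel-fixed, the same colon conditions for $\rgin(I)$ are governed, through the combinatorics of strongly stable ideals, exactly by its top generating degree, which by the first part equals $\reg(\rgin(I))$; and because $\rgin(I)=\LT_{\DegRevLex}(g(I))$ for generic $g$, the colon- and quotient-dimensions entering the criterion are the same for $I$ and for $\rgin(I)$ in those generic coordinates. Comparing the two instances of the criterion yields $\reg(\rgin(I))\le\reg(I)$, hence equality.

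The only genuinely non-formal step, and the main obstacle, is the reverse inequality $\reg(\rgin(I))\le\reg(I)$ in the general case: there is no elementary manipulation giving it, and it rests essentially on the Bayer--Stillman cohomological characterization of regularity (equivalently, on controlling the local cohomology of $S/I$ under the revlex degeneration). Everything else is bookkeeping with the Eliahou--Kervaire resolution and with semicontinuity of Betti numbers.
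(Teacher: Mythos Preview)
The paper does not give its own proof of this theorem: it is stated with a citation to \cite{bayer1987criterion} and no argument is supplied. Your sketch is a correct outline of the classical proof---the Eliahou--Kervaire resolution pins down the regularity of a strongly stable ideal as its top generating degree, and the Bayer--Stillman criterion together with upper semicontinuity of Betti numbers under Gr\"obner degeneration gives $\reg(I)=\reg(\rgin(I))$---so there is nothing to compare beyond noting that you have reproduced the standard literature argument that the paper simply cites.
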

\begin{Remark}\label{rem:regandminimalgen} If $I$ is a homogeneous ideal of $S$, then the highest degree of a minimal generator of $I$ is smaller or equal to $\reg(I)$.
\end{Remark}

Given $I$ a homogeneous ideal of $S$, we will denote by {\boldmath$\hat{I}$} the following ideal
$$\hat{I}= I+\ideal{x_1,\dots,x_l}^{\reg(I)+1}.$$

In Corollary~5.4 from \cite{palezzato2019lefschetz}, the authors described how to reduce to the Artinian case if we are interested in studying the WLP. We can generalize such result for the $k$-SLP.

\begin{Theorem}\label{theo:kSLPtoArtinian}
Let $I$ be a homogeneous ideal of $S$ and $1\le k\le l$. Then the following facts are equivalent
\begin{enumerate}
\item the graded ring $S/I$ has the $k$-SLP,
\item the graded Artinian ring $S/\hat{I}$ has the $k$-SLP.
\end{enumerate}
\end{Theorem}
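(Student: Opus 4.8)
The plan is to reduce to the case of a strongly stable ideal and then, for such ideals, prove by induction on $k$ a slightly more general Artinianization statement. Write $\mathfrak m=\ideal{x_1,\dots,x_l}$ and set $r=\reg(I)$. First I would observe that, combining Theorem~\ref{theo:kSLPtorgin} with Proposition~\ref{prop:kwlplastvarok}, $S/I$ has the $k$-SLP if and only if $S/\rgin(I)$ does, and likewise $S/\hat I$ has the $k$-SLP if and only if $S/\rgin(\hat I)$ does. Next I would check that $\rgin(\hat I)=\widehat{\rgin(I)}$: the ideal $\mathfrak m^{r+1}$ is $\GL(l)$-stable and, together with $I$, fills every graded component in degrees $>r$, so $\LT_{\DegRevLex}\bigl(g(I)+\mathfrak m^{r+1}\bigr)=\LT_{\DegRevLex}(g(I))+\mathfrak m^{r+1}$ for every $g\in\GL(l)$; taking $g$ generic and using $\reg(\rgin(I))=\reg(I)$ (Theorem~\ref{theo:regginequalideal}) gives the identity. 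Hence it suffices to prove the theorem for $I$ strongly stable.

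The statement I would then prove, by induction on $k$, is: for a strongly stable ideal $J$ of $S$ and any integer $N\ge\reg(J)$, the ring $S/J$ has the $k$-SLP if and only if $S/(J+\mathfrak m^{N+1})$ has the $k$-SLP; the theorem is the case $N=\reg(J)$. I allow a general $N$ because restricting a strongly stable ideal to a polynomial subring in fewer variables may strictly decrease the regularity. For $k\ge2$ I would apply Proposition~\ref{prop:inductionkwlp} to the strongly stable ideals $J$ and $J+\mathfrak m^{N+1}$: the condition that $S/J$ has the SLP with Lefschetz element $x_l$ is dealt with by the case $k=1$ together with Lemma~\ref{lemma:SSWLPx_l}, and for the remaining condition one passes to $\bar S=\KK[x_1,\dots,x_{l-1}]$, where $J+\ideal{x_l}$ corresponds to $\bar J:=J\cap\bar S$. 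This $\bar J$ is strongly stable with $\reg(\bar J)\le\reg(J)\le N$ — its minimal generators are exactly those of $J$ not divisible by $x_l$ — while $J+\mathfrak m^{N+1}+\ideal{x_l}$ corresponds to $\bar J+\ideal{x_1,\dots,x_{l-1}}^{N+1}$, so the inductive hypothesis in $l-1$ variables applies.

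For the base case $k=1$, by Lemma~\ref{lemma:SSWLPx_l} and Proposition~\ref{prop:SLPdiffHF} the ring $S/J$ (respectively $S/(J+\mathfrak m^{N+1})$) has the SLP if and only if \eqref{eq:SLPHilbertdiffequal} holds with $\ell=x_l$ for all $s\ge1$ and $d\ge0$. Since $J$ and $J+\mathfrak m^{N+1}$ agree in degrees $\le N$ and $S/(J+\mathfrak m^{N+1})$ vanishes in degrees $>N$, the latter ring has the SLP if and only if \eqref{eq:SLPHilbertdiffequal} holds for $J$ in all degrees $d\le N$; so I must show that \eqref{eq:SLPHilbertdiffequal} for $J$ in degrees $\le N$ forces it in all degrees, equivalently (as in the proof of Proposition~\ref{prop:SLPdiffHF}) that the map $\times x_l^s\colon(S/J)_{d-s}\to(S/J)_d$ having full rank for all target degrees $d\le N$ forces it for all $d$. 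The structural input is that for strongly stable $J$ the map $\times x_l\colon(S/J)_d\to(S/J)_{d+1}$ is injective whenever $d\ge\reg(J)$: if a monomial $u\notin J$ with $\deg u\ge\reg(J)$ had $x_lu\in J$, then writing $x_lu$ as a multiple of a minimal generator of $J$ and repeatedly applying the strongly stable exchange to move the factors of $x_l$ to smaller indices would force $u\in J$, a contradiction. Granting this, for $d>N$ I would factor $\times x_l^s$ through degree $d-s$ when $d-s\ge\reg(J)$ (a composition of injective maps), and through degree $\reg(J)$ when $d-s<\reg(J)$; in the second case the upper factor is injective and the lower factor $\times x_l^{\reg(J)-(d-s)}$ has full rank by the hypothesis, the only delicate sub-case being when that lower factor is surjective but not injective — then $\times x_l\colon(S/J)_{\reg(J)-1}\to(S/J)_{\reg(J)}$ is surjective, which for strongly stable $J$ forces $\KK[x_1,\dots,x_{l-1}]_{\reg(J)}\subseteq J$, hence makes $\times x_l$ bijective in all degrees $\ge\reg(J)$, so the composition has full rank again.

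I expect the main obstacle to be this base case: establishing the injectivity of multiplication by $x_l$ in degrees $\ge\reg(J)$ for strongly stable ideals, and disposing of the degenerate surjective-but-not-injective sub-case in the factorization argument. The rest is bookkeeping with Hilbert functions and with the propositions already proved.
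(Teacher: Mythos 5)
Your proof is correct, and its skeleton coincides with the paper's: reduce to the strongly stable case via $\rgin$, use the identity $\rgin(\hat I)=\rgin(I)+\ideal{x_1,\dots,x_l}^{\reg(I)+1}$ (which both arguments derive from $\LT_{\DegRevLex}(g(I)+\mathfrak m^{r+1})=\LT_{\DegRevLex}(g(I))+\mathfrak m^{r+1}$ and Theorem~\ref{theo:regginequalideal}), observe that $\times x_l$ is injective on $S/J$ in degrees at least $\reg(J)$ because a strongly stable ideal has no minimal generators beyond its regularity, and factor $\times x_l^{s}$ through degree $\reg(J)$ when $d-s<\reg(J)<d$. The differences are organizational but worth noting. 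First, you run both implications through $\rgin$, whereas the paper proves $(1)\Rightarrow(2)$ directly with the general linear forms $\ell_1,\dots,\ell_k$ by comparing graded pieces; your route is uniform but leans more heavily on Theorem~\ref{theo:kSLPtorgin}. Second, your explicit induction on $k$ with the strengthened statement for arbitrary $N\ge\reg(J)$ is a genuinely useful refinement: it makes transparent why the argument survives the passage to $\bar J=J\cap\KK[x_1,\dots,x_{l-1}]$, whose regularity may drop, a point the paper handles implicitly when it asserts injectivity of $\times x_{l-j}^s$ for the truncated ideals. Third, in the delicate sub-case where the lower factor $\times x_l^{\reg(J)-(d-s)}$ is surjective but not injective, you argue combinatorially (surjectivity onto degree $\reg(J)$ forces $\KK[x_1,\dots,x_{l-1}]_{\reg(J)}\subseteq J$, hence surjectivity of $\times x_l$ in all higher degrees), while the paper instead invokes unimodality of the Hilbert function of $S/(\rgin(I)+\ideal{x_l,\dots,x_{l-j+1}})$ from \cite[Proposition~2.10]{palezzato2019lefschetz} together with its eventual monotonicity; your version is self-contained and avoids that external input. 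All the individual claims you flag as the main obstacles (the exchange argument for injectivity past the regularity, and the disposal of the surjective-not-injective sub-case) check out.
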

\begin{proof} Assume that $(S/I,\ell_1,\dots,\ell_k)$ has the $k$-SLP, and let $0\le j\le k-1$. By construction $(S/(I+\ideal{\ell_1,\dots,\ell_{j}}))_d=(S/(\hat{I}+\ideal{\ell_1,\dots,\ell_{j}}))_d$, for any $0\le d\le \reg(I)$, where if $j=0$, the ideal $\ideal{\ell_1,\dots,\ell_j}=\ideal{0}$.
Since $(S/I,\ell_1,\dots,\ell_k)$ has the $k$-SLP, the multiplication map 
$$\times \ell^s_{j+1} \colon (S/(\hat{I}+\ideal{\ell_1,\dots,\ell_j}))_{d} \rightarrow (S/(\hat{I}+\ideal{\ell_1,\dots,\ell_j}))_{d+s}$$ 
has full-rank every time $d-s \le \reg(I)$.

On the other hand, $(S/(\hat{I}+\ideal{\ell_1,\dots,\ell_{j}}))_d=0$ for any $d\ge\reg(I)+1$. This implies that the multiplication map 
$$\times \ell^s_{j+1} \colon (S/(\hat{I}+\ideal{\ell_1,\dots,\ell_j}))_{d} \rightarrow (S/(\hat{I}+\ideal{\ell_1,\dots,\ell_j}))_{d+s}$$
is always surjective when $d-s \ge \reg(I)+1$, and hence, $(S/\hat{I},\ell_1,\dots,\ell_k)$ has the $k$-SLP.

Assume now that $S/\hat{I}$ has the $k$-SLP, and let $0\le j\le k-1$. By Theorem~\ref{theo:kSLPtorgin}, $(S/\rgin(\hat{I}),x_l,\dots, x_{l-k+1})$ has the $k$-SLP. By Theorem~\ref{theo:regginequalideal}, $\rgin(I)$ has no minimal generators of degree greater or equal to $\reg(I)+1$. Hence $\rgin(\hat{I})=\rgin(I)+\ideal{x_1,\dots,x_l}^{\reg(I)+1}$. This implies that, similarly to the previous part, $(S/(\rgin(I)+\ideal{x_l,\dots,x_{l-j+1}}))_d=(S/(\rgin(\hat{I})+\ideal{x_l,\dots,x_{l-j+1}}))_d$, for any $0\le d\le \reg(I)$, where if $j=0$, the ideal $\ideal{x_l,\dots,x_{l-j+1}}=\ideal{0}$. Consider $s\ge1$, then the multiplication map $\times x_{l-j}^s$ from $(S/(\rgin(I)+\ideal{x_l,\dots,x_{l-j+1}}))_{d}$ to
$(S/(\rgin(I)+\ideal{x_l,\dots,x_{l-j+1}}))_{d+s}$
has full-rank every time $d+s \le \reg(I)$. On the other hand, since $\rgin(I)$ has no minimal generators of degree greater or equal to $\reg(I)+1$, the multiplication map $\times x_{l-j}^s$
is injective for every $d \ge \reg(I)$. This implies that, if $s=1$, all the multiplication maps by $x_{l-j}$ have full rank, and hence that $(S/\rgin(I),x_l,\dots, x_{l-k+1})$ has the $k$-WLP. 
Let $s\ge2$ and $d<\reg(I)<d+s$. Consider the multiplication map $\times x_{l-j}^s$ from $(S/(\rgin(I)+\ideal{x_l,\dots,x_{l-j+1}}))_{d}$ to
$(S/(\rgin(I)+\ideal{x_l,\dots,x_{l-j+1}}))_{d+s}$.
This map can be written as the composition of the multiplication maps $\times x_{l-j}^{\reg(I)-d}$ from $(S/(\rgin(I)+\ideal{x_l,\dots,x_{l-j+1}}))_{d}$ to $(S/(\rgin(I)+\ideal{x_l,\dots,x_{l-j+1}}))_{\reg(I)}$ and $\times x_{l-j}^{d+s-\reg(I)}$ from $(S/(\rgin(I)+\ideal{x_l,\dots,x_{l-j+1}}))_{\reg(I)}$ to $(S/(\rgin(I)+\ideal{x_l,\dots,x_{l-j+1}}))_{d+s}$. Notice that both maps have full-rank.
By \cite[Proposition~2.10]{palezzato2019lefschetz}, the Hilbert function of $S/(\rgin(I)+\ideal{x_l,\dots,x_{l-j+1}})$ is unimodal, and, since $\rgin(I)$ has no minimal generators of degree greater or equal to $\reg(I)+1$, such Hilbert function is increasing for every $d\ge\reg(I)$.
This implies that we have to analyze only the following two cases.
If 
\begin{align*}
&\HF(S/(\rgin(I)+\ideal{x_l,\dots,x_{l-j+1}}),d)\\
\le&\HF(S/(\rgin(I)+\ideal{x_l,\dots,x_{l-j+1}}),\reg(I))\\
\le&\HF(S/(\rgin(I)+\ideal{x_l,\dots,x_{l-j+1}}),d+s),
\end{align*}
then both multiplication maps $\times x_{l-j}^{\reg(I)-d}$ and $\times x_{l-j}^{d+s-\reg(I)}$ are injective and hence so is $\times x_{l-j}^{s}$.
If 
\begin{align*}
&\HF(S/(\rgin(I)+\ideal{x_l,\dots,x_{l-j+1}}),d)\\
>&\HF(S/(\rgin(I)+\ideal{x_l,\dots,x_{l-j+1}}),\reg(I))\\
=&\HF(S/(\rgin(I)+\ideal{x_l,\dots,x_{l-j+1}}),d+s),
\end{align*}
 then the multiplication map $\times x_{l-j}^{\reg(I)-d}$ is surjective and $\times x_{l-j}^{d+s-\reg(I)}$ is an injective map between spaces of the same dimension and hence it is also surjective. This implies that the multiplication map $\times x_{l-j}^{s}$ is surjective.
This proves that $(S/\rgin(I),x_l,\dots, x_{l-k+1})$ has the $k$-SLP. By Theorem~\ref{theo:kSLPtorgin}, $S/I$ has the $k$-SLP.
\end{proof}

If we consider only the case when $s=1$ in the argument of Theorem~\ref{theo:kSLPtoArtinian}, we can describe how to reduce to the Artinian case if we are interested in studying the $k$-WLP.

\begin{Corollary}\label{cor:kWLPtoArtinian}
Let $I$ be a homogeneous ideal of $S$ and $1\le k\le l$. Then the following facts are equivalent
\begin{enumerate}
\item the graded ring $S/I$ has the $k$-WLP,
\item the graded Artinian ring $S/\hat{I}$ has the $k$-WLP.
\end{enumerate}
\end{Corollary}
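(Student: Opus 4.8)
The plan is to mirror the proof of Theorem~\ref{theo:kSLPtoArtinian} but retain only the first-difference information, i.e. only the multiplication maps $\times\ell$ (the case $s=1$), which is exactly where the $k$-WLP lives. Since $k$-WLP for a strongly stable ideal is characterized, via Propositions~\ref{prop:WLPdiffHF}, \ref{prop:inductionkwlp} and \ref{prop:kwlplastvarok}, together with Theorem~\ref{theo:kSLPtorgin}, purely in terms of Hilbert functions of the quotients $S/(I+\ideal{x_l,\dots,x_{l-j+1}})$ for $0\le j\le k$, the whole argument reduces to comparing these Hilbert functions for $I$ and for $\hat I$.

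First I would prove $(1)\Rightarrow(2)$. Assume $(S/I,\ell_1,\dots,\ell_k)$ has the $k$-WLP and fix $0\le j\le k-1$. As in Theorem~\ref{theo:kSLPtoArtinian}, by the definition of $\hat I$ one has $(S/(I+\ideal{\ell_1,\dots,\ell_j}))_d=(S/(\hat I+\ideal{\ell_1,\dots,\ell_j}))_d$ for all $0\le d\le\reg(I)$, while $(S/(\hat I+\ideal{\ell_1,\dots,\ell_j}))_d=0$ for $d\ge\reg(I)+1$. Hence the map $\times\ell_{j+1}\colon(S/(\hat I+\ideal{\ell_1,\dots,\ell_j}))_d\to(S/(\hat I+\ideal{\ell_1,\dots,\ell_j}))_{d+1}$ agrees with the corresponding map for $I$ whenever $d+1\le\reg(I)$, so it has full rank there; and it is trivially surjective (target zero) whenever $d\ge\reg(I)$. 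Therefore $(S/\hat I,\ell_1,\dots,\ell_k)$ has the $k$-WLP.

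For $(2)\Rightarrow(1)$, assume $S/\hat I$ has the $k$-WLP. By Theorem~\ref{theo:kSLPtorgin}, $(S/\rgin(\hat I),x_l,\dots,x_{l-k+1})$ has the $k$-WLP. By Theorem~\ref{theo:regginequalideal}, $\rgin(I)$ has no minimal generator of degree $\ge\reg(I)+1$, so $\rgin(\hat I)=\rgin(I)+\ideal{x_1,\dots,x_l}^{\reg(I)+1}$, and consequently $(S/(\rgin(I)+\ideal{x_l,\dots,x_{l-j+1}}))_d=(S/(\rgin(\hat I)+\ideal{x_l,\dots,x_{l-j+1}}))_d$ for all $0\le d\le\reg(I)$ and $0\le j\le k-1$. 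For such $d$ with $d+1\le\reg(I)$ the map $\times x_{l-j}$ has full rank because it coincides with the one for $\rgin(\hat I)$; and for $d\ge\reg(I)$ the map $\times x_{l-j}$ is injective, since $\rgin(I)+\ideal{x_l,\dots,x_{l-j+1}}$ has no minimal generator in degree $>\reg(I)$ (any power-product $u$ of degree $d+1$ lying in the ideal is then divisible by some generator of degree $\le\reg(I)\le d$, hence $u/x_{l-j}$ is already in the ideal). Thus all maps $\times x_{l-j}$ have full rank, so $(S/\rgin(I),x_l,\dots,x_{l-k+1})$ has the $k$-WLP, and by Theorem~\ref{theo:kSLPtorgin} again $S/I$ has the $k$-WLP.

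The only subtlety — and the step I would write out most carefully — is the injectivity of $\times x_{l-j}$ in degrees $d\ge\reg(I)$ for the quotient $S/(\rgin(I)+\ideal{x_l,\dots,x_{l-j+1}})$: one must check that quotienting by the extra variables does not introduce new minimal generators in degree $>\reg(I)$, which follows because $\rgin(I)\cap\KK[x_1,\dots,x_{l-j}]$ is again strongly stable with the same regularity bound, so Theorem~\ref{theo:regginequalideal} applies. Everything else is the $s=1$ specialization of the argument already given for Theorem~\ref{theo:kSLPtoArtinian}, so the corollary's proof can simply say ``identical to the proof of Theorem~\ref{theo:kSLPtoArtinian}, retaining only the multiplication maps $\times\ell_{j+1}$ (i.e.\ $s=1$), and using Corollary~5.4 of \cite{palezzato2019lefschetz} in place of the SLP input.''
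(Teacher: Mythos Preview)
Your proposal is correct and matches the paper's approach exactly: the paper's proof is the single sentence ``consider only the case when $s=1$ in the argument of Theorem~\ref{theo:kSLPtoArtinian}'', and you have written out precisely this specialization. Your closing suggestion is in fact what the paper does, so the detailed expansion you give (including the remark on injectivity in high degree, which the paper also asserts without further justification inside the proof of Theorem~\ref{theo:kSLPtoArtinian}) is faithful to, and slightly more explicit than, the original.
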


Similarly to Theorem~\ref{theo:kSLPtoArtinian}, also the study of almost revlex ideals can be reduced to the Artinian case.

\begin{Theorem}\label{theo:almostrevlexArtinian} Let $I$ be a monomial ideal of $S$. Then $I$ is an almost revlex ideal if and only if $\hat{I}$ is an almost revlex ideal.
\end{Theorem}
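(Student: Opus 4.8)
The plan is to work out explicitly the minimal monomial generators of $\hat{I}$ in terms of those of $I$, and then to verify the defining condition of an almost revlex ideal generator by generator in each direction. Set $r=\reg(I)$ and $\mathfrak{m}=\langle x_1,\dots,x_l\rangle$, so that $\hat{I}=I+\mathfrak{m}^{r+1}$. First I would record two elementary facts. By Remark~\ref{rem:regandminimalgen} every minimal generator of $I$ has degree at most $r$; and since $\mathfrak{m}^{r+1}$ contains no monomial of degree $\le r$, the monomial ideals $I$ and $\hat{I}$ contain exactly the same monomials in each degree $d\le r$, whereas $\hat{I}$ contains every monomial of degree $\ge r+1$. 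From these two facts one deduces that the minimal generators of $\hat{I}$ are precisely the minimal generators of $I$ together with the monomials of degree $r+1$ that do not lie in $I$: a monomial of degree $\le r$ is a minimal generator of $\hat{I}$ if and only if it is a minimal generator of $I$ (proper divisors live in degrees $\le r$, where the two ideals agree); a monomial $m$ of degree $r+1$ is a minimal generator of $\hat{I}$ if and only if $m\notin I$ (if $m\in I$ it is a multiple of some minimal generator of $I$, which is then a proper divisor of $m$); and no monomial of degree $\ge r+2$ is a minimal generator of $\hat{I}$, since it has a proper divisor of degree $r+1$, which lies in $\hat{I}$.

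With this description in hand, the implication ``$I$ almost revlex $\Rightarrow$ $\hat{I}$ almost revlex'' is immediate: let $t$ be a minimal generator of $\hat{I}$ and $t'$ a monomial with $\deg(t')=\deg(t)$ and $t'>_{\DegRevLex}t$. If $\deg(t)\le r$, then $t$ is a minimal generator of $I$, so $t'\in I\subseteq\hat{I}$ by the almost revlex hypothesis on $I$; if $\deg(t)=r+1$, then $t'\in\mathfrak{m}^{r+1}\subseteq\hat{I}$ automatically. For the converse, assume $\hat{I}$ is almost revlex, let $t$ be a minimal generator of $I$, and let $t'$ be a monomial with $\deg(t')=\deg(t)$ and $t'>_{\DegRevLex}t$. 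By the description above $t$ is also a minimal generator of $\hat{I}$, hence $t'\in\hat{I}$; and since $\deg(t')=\deg(t)\le r$, where $I$ and $\hat{I}$ have the same monomials, we conclude $t'\in I$. Thus $I$ satisfies Definition~\ref{def:ARL}.

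The only delicate point — and the step I would spell out most carefully — is the identification of the minimal generators of $\hat{I}$: one must check that passing from $I$ to $\hat{I}$ neither loses a minimal generator of $I$ (this cannot happen, since they all sit in degrees $\le r$, where the two ideals coincide) nor introduces unexpected new minimal generators in degrees $\le r$ or in degrees $\ge r+2$. Once this bookkeeping is settled, both implications reduce to a direct unwinding of the definition of an almost revlex ideal, and no further input is needed beyond Remark~\ref{rem:regandminimalgen}.
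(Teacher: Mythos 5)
Your proposal is correct and follows essentially the same route as the paper's proof: both directions reduce to comparing minimal generators of $I$ and $\hat{I}$ degree by degree, using that the two ideals agree in degrees $\le\reg(I)$ and that $\hat{I}$ has no minimal generators above degree $\reg(I)+1$. Your explicit identification of the minimal generating set of $\hat{I}$ just makes the bookkeeping that the paper leaves implicit more visible.
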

\begin{proof} Assume that $I$ is an almost revlex ideal. Let $t$ be a minimal generator of $\hat{I}$ and $t'$ a power-product such that $\deg(t)=\deg(t')$ and $t'>_{\DegRevLex}t$. If $\deg(t)\le\reg(I)$, then $t$ is also a minimal generator of $I$ and hence $t'\in I\subseteq\hat{I}$.
If $\deg(t)=\reg(I)+1$ then, by construction, $t'\in\hat{I}$.
Since $\hat{I}$ has no minimal generators of degree higher than $\reg(I)+1$, this implies that $\hat{I}$ is an almost revlex ideal.

Assume now that $\hat{I}$ is an almost revlex ideal. Let $t$ be a minimal generator of $I$ and $t'$ a power-product such that $\deg(t)=\deg(t')$ and $t'>_{\DegRevLex}t$. By Remark~\ref{rem:regandminimalgen}, $\deg(t)\le\reg(I)$. This implies that $t$ is a minimal generator of $\hat{I}$ and that $t'\in\hat{I}$.
Since $I_d=\hat{I}_d$ for all $0\le d\le\reg(I)$, then $t'\in I$. This proves that $I$ is an almost revlex ideal.
\end{proof}

\section{Sectional matrix and $k$-Lefschetz properties}

It seems natural to investigate the connections between the sectional matrix that encodes the Hilbert function of successive hyperplane sections of a graded algebra and the $k$-Lefschetz properties of such algebra.

In this section, we recall the definition and basic properties of the sectional matrix for the quotient algebra $S/I$, as described in \cite{BPT2016}. We then describe how to determine if a graded algebra has the $k$-SLP or $k$-WLP by looking at its sectional matrix.

\begin{Definition}\label{def:SM} 
Let $I$ be a homogeneous ideal of $S$. The \textbf{sectional matrix} 
of $S/I$ is the function $\{1,\dots,l\}\times \NN \longrightarrow \NN$
$$\M_{S/I}(i,d) = \HF(S/(I+\ideal{\ell_1,\dots,\ell_{l-i}}),d),$$
where $\ell_1,\dots,\ell_{l-i}$ are generic linear forms.
Notice that $$\M_{S/I}(l,d) = \HF(S/I, d).$$
\end{Definition}

\begin{Remark} As described in Theorem~4.1 of \cite{BPT2016}, even if the sectional matrix has an infinite numbers of columns, 
to describe the matrix it is enough the knowledge of the first $\reg(I)$ columns.
\end{Remark}

The following result reduces the study of the sectional matrix of
a homogeneous ideal to the combinatorial behaviour of a monomial ideal.

\begin{Theorem}\cite[Lemma~3.8]{BPT2016}\label{theo:rgin}
Let $I$ be a homogeneous ideal of $S$. 
Then 
$$\M_{S/I} (i,d) = \M_{S/\rgin(I)}(i,d) =
\HF(S/(\rgin(I)+\ideal{x_l,\dots,x_{i+1}}),d),$$
where if $i=l$, then $\ideal{x_l,\dots,x_{i+1}}=\ideal{0}$.
\end{Theorem}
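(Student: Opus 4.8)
The plan is to deduce the statement from Conca's reduction lemma \cite[Lemma~1.2]{conca2003reduction}: for a homogeneous ideal $J\subseteq S$ and generic linear forms $\ell_1,\dots,\ell_j$ one has $\HF(S/(J+\ideal{\ell_1,\dots,\ell_j}),d)=\HF(S/(\rgin(J)+\ideal{x_l,\dots,x_{l-j+1}}),d)$ for all $d$ (the case $j=0$ being Remark~\ref{rem:rginsameHFasideal}). Everything then reduces to applying this to $I$ and to $\rgin(I)$.

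For the rightmost equality of the theorem, set $j=l-i$, so that $\ideal{x_l,\dots,x_{l-j+1}}=\ideal{x_l,\dots,x_{i+1}}$; Definition~\ref{def:SM} together with Conca's lemma applied to $J=I$ gives directly $\M_{S/I}(i,d)=\HF(S/(I+\ideal{\ell_1,\dots,\ell_{l-i}}),d)=\HF(S/(\rgin(I)+\ideal{x_l,\dots,x_{i+1}}),d)$.

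For the middle equality, apply the same lemma with $J=\rgin(I)$. Since $\rgin(I)$ is a strongly stable monomial ideal it is Borel-fixed (Remark~\ref{rem:ss=borelfixed}), hence it coincides with its own $\DegRevLex$ generic initial ideal, $\rgin(\rgin(I))=\rgin(I)$; therefore Conca's lemma yields $\M_{S/\rgin(I)}(i,d)=\HF(S/(\rgin(I)+\ideal{x_l,\dots,x_{i+1}}),d)$, which is exactly the expression just obtained for $\M_{S/I}(i,d)$. This settles both equalities at once.

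The one step deserving care is the input from \cite{conca2003reduction}: its proof rests on the identity $\LT_{\DegRevLex}(K+\ideal{x_l})=\LT_{\DegRevLex}(K)+\ideal{x_l}$ — valid for $\DegRevLex$ with $x_1>_{\DegRevLex}\cdots>_{\DegRevLex}x_l$ and any homogeneous ideal $K$ — applied after a generic change of coordinates and then iterated, where at each stage one passes to the quotient ring $\KK[x_1,\dots,x_m]$ so that the variable being adjoined remains the smallest one (this is what singles out $\DegRevLex$ among term orders for hyperplane sections). One should also observe that a generic linear form can be moved to $x_l$ by an element of the subgroup $\mathcal{H}$ appearing in the proof of Proposition~\ref{prop:inductionkwlp}, which fixes every strongly stable ideal; this gives an alternative, self-contained route both to $\rgin(\rgin(I))=\rgin(I)$ and, more directly, to the middle equality. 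The remaining facts used — that a generic $g\in\GL(l)$ produces a generic flag of linear subspaces, and that graded automorphisms and initial ideals preserve Hilbert functions — are routine.
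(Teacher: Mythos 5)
Your argument is correct. The paper itself does not prove this statement---it imports it as Lemma~3.8 of \cite{BPT2016}---so there is no in-paper proof to compare against; but your reconstruction is sound and uses exactly the tool the paper relies on elsewhere (e.g.\ in the proof of Theorem~\ref{theo:kSLPtorgin}), namely Conca's reduction lemma giving $\HF(S/(I+\ideal{\ell_1,\dots,\ell_j}),d)=\HF(S/(\rgin(I)+\ideal{x_l,\dots,x_{l-j+1}}),d)$ for generic $\ell_1,\dots,\ell_j$; with $j=l-i$ this is precisely the outer equality. The middle equality then follows from the standard characteristic-zero fact that a Borel-fixed ideal equals its own generic initial ideal, so $\rgin(\rgin(I))=\rgin(I)$. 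One small caveat on your closing remark: the $\mathcal{H}$-argument (moving a generic form to $x_l$ and iterating through $\KK[x_1,\dots,x_{l-1}]$) does give a clean direct proof of the middle equality, since $\mathcal{H}$ stabilizes strongly stable ideals and linear changes of coordinates preserve Hilbert functions, but it does not by itself establish $\rgin(\rgin(I))=\rgin(I)$, which requires the full Borel subgroup and the big-cell decomposition of $\GL(l)$; as you only need one of the two routes, this does not affect the validity of the proof.
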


\begin{Example}\label{ex:sectionalmat} Let $I=\ideal{x^2,xy,y^2,xz}$ be an ideal of $S=\mathbb{R}[x,y,z]$. Then the sectional matrix of $S/I$ is given by
$$
\begin{array}{rccccccc} 
          &  _0 & _1 & _2 & _3  & _4  & \dots\\ 
\M_{S/I}(1,d):&  1 & 1 & 0 & 0 & 0 & \dots\\ 
\M_{S/I}(2,d):&  1 & 2 & 0 & 0 & 0 & \dots\\
\HF(S/I,d)=\M_{S/I}(3,d):&  1 & 3 & 2 & 2 & 2 & \dots
\end{array} 
$$
\end{Example}

There are several known results that connect the algebraic properties of an ideal, the entries of the sectional matrix and the shape of the associated generic initial ideal. The most important for this article is the following.
\begin{Theorem}\cite[Theorem~6.6]{Gin-freearr}\label{theo:sectmatrixgenxk}
Let $I$ be a non-zero homogeneous ideal of $S$, $2\le i\le l$ and $d\ge1$.
Then $$\M_{S/I}(i,d)-\M_{S/I}(i,d-1)\le \M_{S/I}(i-1,d).$$
Moreover, the equality holds if and only if $\rgin(I)$ has no minimal generator of degree~$d$ divisible by $x_i$.
\end{Theorem}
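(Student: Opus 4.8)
The plan is to reduce everything to the monomial case via Theorem~\ref{theo:rgin}, and then argue combinatorially with strongly stable ideals. By Theorem~\ref{theo:rgin}, setting $J=\rgin(I)$, the inequality to prove becomes
$$\HF(S/(J+\ideal{x_l,\dots,x_{i+1}}),d)-\HF(S/(J+\ideal{x_l,\dots,x_{i+1}}),d-1)\le \HF(S/(J+\ideal{x_l,\dots,x_i}),d),$$
and since $J$ is strongly stable, so is each successive quotient $J+\ideal{x_l,\dots,x_{i+1}}$ inside $\KK[x_1,\dots,x_i]$ (after identifying, as in the proof of Proposition~\ref{prop:kwlplastvarok}, the quotient by the last few variables with the contraction $\bar J=J\cap\KK[x_1,\dots,x_i]$, which is again strongly stable). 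So it suffices to prove: if $J'$ is a strongly stable ideal of $S'=\KK[x_1,\dots,x_i]$ with $i\ge 2$ and $d\ge 1$, then
$$\HF(S'/J',d)-\HF(S'/J',d-1)\le \HF(S'/(J'+\ideal{x_i}),d),$$
with equality if and only if $J'$ has no minimal generator of degree $d$ divisible by $x_i$.

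First I would set up the basic bookkeeping. Let $A_d$ denote the set of power-products of degree $d$ in $S'$ not lying in $J'$, so $\HF(S'/J',d)=\#A_d$. Multiplication by $x_i$ sends $A_{d-1}$ into $S'_d$; because $J'$ is strongly stable (actually just stable suffices here, but strong stability is what we have), if $x_i t \in J'$ with $\deg t = d-1$ then in fact $t\in J'$, so $x_i\cdot A_{d-1}\subseteq A_d$ and this map is injective. Hence $A_d$ decomposes as the disjoint union of $x_i\cdot A_{d-1}$ and the set $B_d$ of power-products of degree $d$ not in $J'$ and not divisible by $x_i$. But $\#B_d$ is exactly $\HF(S'/(J'+\ideal{x_i}),d)$, since monomials of $S'/(J'+\ideal{x_i})$ in degree $d$ correspond to degree-$d$ monomials of $S'$ divisible by no $x_i$ and not in $J'$ (using that $(J'+\ideal{x_i})\cap\KK[x_1,\dots,x_{i-1}]=J'\cap\KK[x_1,\dots,x_{i-1}]$). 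Therefore
$$\HF(S'/J',d)=\HF(S'/J',d-1)+\#B_d = \HF(S'/J',d-1)+\HF(S'/(J'+\ideal{x_i}),d),$$
which is precisely the claimed inequality, and in fact shows the gap $\M_{S/I}(i-1,d) - \bigl(\M_{S/I}(i,d)-\M_{S/I}(i,d-1)\bigr)$ equals the number of degree-$d$ monomials of $S'$ divisible by $x_i$, not in $J'$, whose quotient by $x_i$ lies in $J'$ — call these the "new" monomials.

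The remaining point is the equality characterization: the count above is $0$ if and only if there is no degree-$d$ monomial $u=x_i v$ with $u\notin J'$ but $v\in J'$ — i.e., no monomial of $J'$ of degree $d$ arises as $x_i\cdot(\text{something in }J')$ while $u$ itself is outside $J'$; equivalently every degree-$d$ monomial of $J'$ divisible by $x_i$ has its $x_i$-quotient outside... wait, I must phrase it as the statement does. The clean way: the "new" set is empty iff $x_i\cdot J'_{d-1} = J'_d \cap x_i S'$, i.e. every degree-$d$ element of $J'$ divisible by $x_i$ is $x_i$ times an element of $J'_{d-1}$; and a minimal generator of $J'$ of degree $d$ divisible by $x_i$ is exactly a monomial $u\in J'_d$, $x_i\mid u$, with $u/x_i\notin J'$ and $u/x_j\notin J'$ for all $j<i$ with $x_j\mid u$ — but by strong stability, if $u/x_i\notin J'$ then $u/x_j\notin J'$ for every $j<i$ automatically (since $u/x_j = x_i\cdot(u/x_i)/x_j \cdot$... ), so $u$ is a minimal generator iff just $u/x_i\notin J'$. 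Thus the "new" set is in bijection with the minimal generators of $J'$ of degree $d$ divisible by $x_i$, and emptiness of one is emptiness of the other. I expect the main obstacle to be exactly this last equivalence: checking carefully that, for a strongly stable ideal, a degree-$d$ monomial $u$ divisible by $x_i$ with $u/x_i\notin J'$ is automatically a minimal generator (no other variable-quotient can lie in $J'$ either), which is where strong stability — rather than mere stability — is genuinely used. Everything else is the disjoint-union count above and the reduction through Theorem~\ref{theo:rgin}.
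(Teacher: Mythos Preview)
The paper does not give its own proof of this statement; it is quoted from \cite{Gin-freearr}. So there is nothing to compare against, and I will simply assess your argument.

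There is a genuine error in the inequality step. You claim that for a strongly stable ideal $J'\subseteq S'=\KK[x_1,\dots,x_i]$, ``if $x_it\in J'$ with $\deg t=d-1$ then in fact $t\in J'$'', and from this you deduce $x_i\cdot A_{d-1}\subseteq A_d$ and hence an \emph{equality} $\HF(S'/J',d)=\HF(S'/J',d-1)+\HF(S'/(J'+\ideal{x_i}),d)$. The implication is false: take $J'=\ideal{x_1^2,x_1x_2}\subseteq\KK[x_1,x_2]$, which is strongly stable; then $x_2\cdot x_1=x_1x_2\in J'$ but $x_1\notin J'$. Correspondingly, at $d=2$ one has $\HF(S'/J',2)=1$, $\HF(S'/J',1)=2$, $\HF(S'/(J'+\ideal{x_2}),2)=0$, so your claimed equality $1=2+0$ fails. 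Your description of the ``gap'' as counting monomials $u\notin J'$ with $u/x_i\in J'$ is then vacuous, since such $u$ cannot exist in any ideal.

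The fix is to run the injection in the opposite direction. Write $A_d=B_d\sqcup C_d$, where $B_d$ is as you defined and $C_d=\{u\in A_d:\ x_i\mid u\}$. Division by $x_i$ gives an injection $C_d\hookrightarrow A_{d-1}$, simply because $J'$ is an ideal (if $u\notin J'$ then $u/x_i\notin J'$). This yields $\#A_d-\#B_d=\#C_d\le\#A_{d-1}$, i.e.\ the desired inequality. The defect $\#A_{d-1}-\#C_d$ counts monomials $v\notin J'_{d-1}$ with $x_iv\in J'_d$; setting $u=x_iv$, these are exactly the degree-$d$ monomials $u\in J'$ divisible by $x_i$ with $u/x_i\notin J'$. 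Your final paragraph then applies correctly: by strong stability, $u/x_i\notin J'$ forces $u/x_j\notin J'$ for every $j<i$ with $x_j\mid u$ (since if $u/x_j\in J'$ and $x_i\mid u/x_j$, replacing $x_i$ by $x_j$ gives $u/x_i\in J'$), so such $u$ are precisely the minimal generators of $J'$ of degree $d$ divisible by $x_i$. Finally, the passage from $J'=\rgin(I)\cap\KK[x_1,\dots,x_i]$ back to minimal generators of $\rgin(I)$ divisible by $x_i$ uses that $\rgin(I)$ is strongly stable: any minimal generator of $\rgin(I)$ divisible by $x_i$ already lies in $\KK[x_1,\dots,x_i]$, since a factor $x_j$ with $j>i$ could be swapped for $x_i$ without leaving the ideal, contradicting minimality.
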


Using the language of sectional matrices, we can rephrase Proposition~\ref{prop:WLPdiffHF} and characterize the graded algebras having the WLP via sectional matrices.
\begin{Proposition}\label{prop:WLPdiffSectMat} Let $I$ be a homogeneous ideal of $S$.
Then the graded ring $S/I$ has the WLP if and only if for every $0\le d\le \reg(I)$ we have that
\begin{equation}\label{eq:WLPsectionalmat}
\M_{S/I}(l-1,d)=\max\{\M_{S/I}(l,d)-\M_{S/I}(l,d-1),0\},
\end{equation}
where $\M_{S/I}(l,-1)=0$. 
\end{Proposition}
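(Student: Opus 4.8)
The plan is to reduce everything to the strongly stable case with Lefschetz element $x_l$, invoke the Hilbert-function characterization of the WLP already established in Proposition~\ref{prop:WLPdiffHF}, translate the Hilbert functions appearing there into entries of the sectional matrix, and finally argue that the claimed equality is automatic in degrees exceeding $\reg(I)$, so that only the range $0\le d\le\reg(I)$ is relevant.

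Concretely, first I would use Proposition~\ref{prop:ILPginLP} to replace $I$ by $\rgin(I)$: the ring $S/I$ has the WLP if and only if $S/\rgin(I)$ has the WLP. Since $\rgin(I)$ is strongly stable, Lemma~\ref{lemma:SSWLPx_l} upgrades this to: $S/\rgin(I)$ has the WLP if and only if it has the WLP with weak Lefschetz element $x_l$. Applying Proposition~\ref{prop:WLPdiffHF} to the ideal $\rgin(I)$ with $\ell=x_l$, this is in turn equivalent to
\[
\HF(S/(\rgin(I)+\ideal{x_l}),d)=\max\{\HF(S/\rgin(I),d)-\HF(S/\rgin(I),d-1),0\}\quad\text{for all }d\ge0.
\]
Then I would read off the sectional matrix via Theorem~\ref{theo:rgin}: with $i=l-1$ it gives $\HF(S/(\rgin(I)+\ideal{x_l}),d)=\M_{S/\rgin(I)}(l-1,d)=\M_{S/I}(l-1,d)$, and with $i=l$ it gives $\HF(S/\rgin(I),d)=\M_{S/I}(l,d)$. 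Hence $S/I$ has the WLP if and only if \eqref{eq:WLPsectionalmat} holds for every $d\ge0$.

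It remains to discard the degrees $d\ge\reg(I)+1$, i.e.\ to show \eqref{eq:WLPsectionalmat} holds automatically there. For such $d$, since $\rgin(I)$ is strongly stable with $\reg(\rgin(I))=\reg(I)$ by Theorem~\ref{theo:regginequalideal}, it has no minimal generator of degree $d$, in particular none divisible by $x_l$; so Theorem~\ref{theo:sectmatrixgenxk} (with $i=l$) yields $\M_{S/I}(l,d)-\M_{S/I}(l,d-1)=\M_{S/I}(l-1,d)$. As the right-hand side is a value of a Hilbert function it is non-negative, so $\max\{\M_{S/I}(l,d)-\M_{S/I}(l,d-1),0\}=\M_{S/I}(l-1,d)$ and \eqref{eq:WLPsectionalmat} holds. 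This last step is the only one requiring any care; everything else is a direct transcription of Propositions~\ref{prop:ILPginLP} and \ref{prop:WLPdiffHF}, Lemma~\ref{lemma:SSWLPx_l} and Theorem~\ref{theo:rgin}. (Implicitly one assumes $l\ge2$, as the statement refers to $\M_{S/I}(l-1,d)$.)
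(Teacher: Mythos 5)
Your proposal is correct and follows essentially the same route as the paper: the equivalence rests on Proposition~\ref{prop:WLPdiffHF}, and the only real work is showing that \eqref{eq:WLPsectionalmat} holds automatically for $d\ge\reg(I)+1$ via Theorems~\ref{theo:regginequalideal} and \ref{theo:sectmatrixgenxk}, exactly as in the paper. Your explicit detour through $\rgin(I)$, $x_l$ and Theorem~\ref{theo:rgin} merely makes precise the genericity bookkeeping (that the generic linear form in Definition~\ref{def:SM} can be taken as the Lefschetz element) which the paper leaves implicit.
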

\begin{proof} By Definition~\ref{def:SM} and Proposition~\ref{prop:WLPdiffHF}, we just need to show that, if for every $0\le d\le \reg(I)$,
the sectional matrix of $S/I$ satisfies \eqref{eq:WLPsectionalmat}, then $S/I$ has the WLP. 
By Theorem~\ref{theo:regginequalideal}, $\rgin(I)$ has no minimal generators of degree greater or equal to $\reg(I)+1$. This implies that by Theorem~\ref{theo:sectmatrixgenxk}, that $\M_{S/I}(l,d)-\M_{S/I}(l,d-1)= \M_{S/I}(l-1,d)$ for every $d\ge\reg(I)+1$. Hence, the sectional matrix of $S/I$ satisfies \eqref{eq:WLPsectionalmat}, for every $d\ge0$. We conclude by Proposition~\ref{prop:WLPdiffHF}.
\end{proof}

Similarly to Proposition~\ref{prop:WLPdiffSectMat}, using Proposition~\ref{prop:inductionkwlp}, we obtain the following generalization of Proposition~\ref{prop:WLPdiffHF} for the $k$-WLP.

\begin{Theorem}\label{theo:kWLPdiffSectMat} Let $I$ be a homogeneous ideal of $S$ and $1\le k\le l$.
Then the graded ring $S/I$ has the $k$-WLP if and only if for every $0\le d\le \reg(I)$ and $0\le j\le k-1$ we have that
\begin{equation}\label{eq:kWLPsectionalmat}
\M_{S/I}(l-j-1,d)=\max\{\M_{S/I}(l-j,d)-\M_{S/I}(l-j,d-1),0\},
\end{equation}
where $\M_{S/I}(l,-1)=0$. 
\end{Theorem}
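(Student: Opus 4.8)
The plan is to reduce the statement to the already-established case $k=1$ (Proposition~\ref{prop:WLPdiffSectMat}) together with an induction on $k$, using Proposition~\ref{prop:inductionkwlp} to pass from $S/I$ to the hyperplane section. First I would invoke Theorem~\ref{theo:kSLPtorgin} (the $k$-WLP part, equivalently Corollary~\ref{cor:kWLPtoArtinian} combined with Theorem~\ref{theo:kSLPtorgin}) to reduce to the case where $I$ is a strongly stable ideal and the candidate Lefschetz flag is $x_l, x_{l-1}, \dots, x_{l-k+1}$; this is legitimate because by Theorem~\ref{theo:rgin} the entries $\M_{S/I}(i,d)$ only depend on $\rgin(I)$, so condition~\eqref{eq:kWLPsectionalmat} is unchanged when $I$ is replaced by $\rgin(I)$.

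Next, with $I$ strongly stable, I would set up the induction. The base case $k=1$ is exactly Proposition~\ref{prop:WLPdiffSectMat}. For the inductive step, by Proposition~\ref{prop:inductionkwlp}, $S/I$ has the $k$-WLP if and only if $S/I$ has the WLP with Lefschetz element $x_l$ and $S/(I+\ideal{x_l})$ has the $(k-1)$-WLP. Writing $\bar S = \KK[x_1,\dots,x_{l-1}]$ and $\bar I = I \cap \bar S$ as in the proof of Proposition~\ref{prop:kwlplastvarok}, we have $S/(I+\ideal{x_l}) \cong \bar S/\bar I$ with $\bar I$ strongly stable in $l-1$ variables. The key bookkeeping point is that the sectional matrix of $\bar S/\bar I$ is obtained from that of $S/I$ by deleting the top row: precisely, $\M_{\bar S/\bar I}(i,d) = \HF(\bar S/(\bar I + \ideal{\ell_1,\dots,\ell_{l-1-i}}),d) = \HF(S/(I+\ideal{x_l,\ell_1,\dots,\ell_{l-1-i}}),d) = \M_{S/I}(i,d)$ for $1 \le i \le l-1$, where I use genericity of the $\ell_j$ and that $x_l$ together with $l-1-i$ further generic forms is itself a generic $(l-i)$-tuple. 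Under this identification, the conditions~\eqref{eq:kWLPsectionalmat} for $\bar S/\bar I$ with $0 \le j \le k-2$ are exactly the conditions~\eqref{eq:kWLPsectionalmat} for $S/I$ with $1 \le j \le k-1$, while the $j=0$ condition for $S/I$ is the WLP criterion of Proposition~\ref{prop:WLPdiffSectMat}. Combining these with the inductive hypothesis applied to $\bar S/\bar I$ and Proposition~\ref{prop:inductionkwlp} closes the induction. One small point requiring care: the range of $d$ is $0 \le d \le \reg(I)$ for $S/I$ but $0 \le d \le \reg(\bar I)$ for $\bar S/\bar I$, and $\reg(\bar I) \le \reg(I)$ since $\bar I$ is a hyperplane section of a strongly stable ideal; the extra columns $\reg(\bar I) < d \le \reg(I)$ are handled exactly as in the proof of Proposition~\ref{prop:WLPdiffSectMat}, using Theorems~\ref{theo:regginequalideal} and~\ref{theo:sectmatrixgenxk} to see that equality in~\eqref{eq:kWLPsectionalmat} holds automatically there.

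The main obstacle I anticipate is not any single deep step but rather getting the index bookkeeping exactly right: matching the row index $l-j$ in the sectional matrix of $S/I$ against the row index $(l-1)-j'$ in the sectional matrix of $\bar S/\bar I$ after deleting one variable, and making sure the $d$-ranges line up. The genericity argument showing that taking a section by $x_l$ first and then by further generic forms gives the same Hilbert function as taking all sections generically (which underlies the identification $\M_{\bar S/\bar I}(i,d) = \M_{S/I}(i,d)$) is where I would be most careful, though it follows from the same circle of ideas already used in Proposition~\ref{prop:kwlplastvarok} and Theorem~\ref{theo:rgin}. Once that identification is in place, the proof is a clean two-line induction: the $j=0$ equation is the WLP of $S/I$ by Proposition~\ref{prop:WLPdiffSectMat}, and the equations for $j \ge 1$ are, verbatim, the hypothesis of the theorem for $\bar S/\bar I$ and $k-1$, so the result follows from Proposition~\ref{prop:inductionkwlp}.
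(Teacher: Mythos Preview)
Your proposal is correct and follows essentially the same approach as the paper, which simply remarks that the result follows ``similarly to Proposition~\ref{prop:WLPdiffSectMat}, using Proposition~\ref{prop:inductionkwlp}.'' You have spelled out precisely that induction: reduce to the strongly stable case via Theorems~\ref{theo:kSLPtorgin} and~\ref{theo:rgin}, take $k=1$ as the base case (Proposition~\ref{prop:WLPdiffSectMat}), and for the inductive step use Proposition~\ref{prop:inductionkwlp} together with the identification $\M_{\bar S/\bar I}(i,d)=\M_{S/I}(i,d)$ for $i\le l-1$, handling the extra range $\reg(\bar I)<d\le\reg(I)$ exactly as in the proof of Proposition~\ref{prop:WLPdiffSectMat}.
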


\begin{Example}\label{ex:sectionalmatkWLP} Let $I$ be the ideal of Example~\ref{ex:sectionalmat}. Then $\reg(I)=2$ and the sectional matrix of $S/I$ satisfies condition
\eqref{eq:kWLPsectionalmat}, and hence $S/I$ has the $3$-WLP.
\end{Example}

\begin{Corollary}\label{corol:triantokwlp} Let $I$ be a homogeneous ideal of $S$. If there exists $0\le k\le l-1$ such that
$$\M_{S/I}(l-k-1,d)=\M_{S/I}(l-k,d)-\M_{S/I}(l-k,d-1),$$
for all $d\ge0$, then $S/I$ has the $(k{+}1)$-WLP.
\end{Corollary}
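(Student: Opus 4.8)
The plan is to derive this as an immediate consequence of Theorem~\ref{theo:kWLPdiffSectMat}. The hypothesis gives us, for a single fixed value $k$, the equality
$$\M_{S/I}(l-k-1,d)=\M_{S/I}(l-k,d)-\M_{S/I}(l-k,d-1)$$
for all $d\ge0$. What Theorem~\ref{theo:kWLPdiffSectMat} requires for the $(k{+}1)$-WLP is instead a whole family of equalities indexed by $0\le j\le k$, namely
$$\M_{S/I}(l-j-1,d)=\max\{\M_{S/I}(l-j,d)-\M_{S/I}(l-j,d-1),0\}$$
for $0\le d\le\reg(I)$. So the real content of the argument is to show that the single equality at level $k$ forces the analogous equalities at all lower levels $j=0,1,\dots,k-1$.

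First I would observe that Theorem~\ref{theo:sectmatrixgenxk} always gives the inequality $\M_{S/I}(i,d)-\M_{S/I}(i,d-1)\le\M_{S/I}(i-1,d)$ for every $2\le i\le l$ and $d\ge1$ (and for $d=0$ both sides equal $1$ since $I$ is a proper ideal, unless $I=S$ in which case the statement is vacuous). Since the entries of the sectional matrix are nonnegative, this means
$$\M_{S/I}(i-1,d)\ge\max\{\M_{S/I}(i,d)-\M_{S/I}(i,d-1),0\}$$
for all relevant $i$ and $d$, with equality precisely when $\rgin(I)$ has no minimal generator of degree $d$ divisible by $x_i$. Setting $i=l-k$, the hypothesis says this equality holds for all $d$, so $\rgin(I)$ has no minimal generator divisible by $x_{l-k}$ at all. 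Because $\rgin(I)$ is strongly stable, the absence of a minimal generator divisible by $x_{l-k}$ forces the absence of a minimal generator divisible by any $x_i$ with $i\le l-k$: if $x_i\mid u$ for a minimal generator $u$ with $i<l-k$, then by the strongly stable property one can repeatedly replace a variable of larger index until one obtains a power-product in $\rgin(I)$ divisible by $x_{l-k}$ of the same degree, and a standard argument shows this contradicts minimality at level $l-k$. Hence $\rgin(I)$ has no minimal generator divisible by $x_i$ for any $i\le l-k$, so by the equality clause of Theorem~\ref{theo:sectmatrixgenxk} we get
$$\M_{S/I}(i-1,d)=\M_{S/I}(i,d)-\M_{S/I}(i,d-1)$$
for all $2\le i\le l-k$ and all $d\ge1$, which is exactly the family of equalities needed for $j=0,\dots,k-1$ (and the max with $0$ is harmless since, by this same chain, the left-hand sides stay nonnegative).

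Thus all the hypotheses of Theorem~\ref{theo:kWLPdiffSectMat} (with $k$ there replaced by $k+1$) are met, and we conclude that $S/I$ has the $(k{+}1)$-WLP. The main obstacle I anticipate is the bookkeeping in the step that propagates ``no minimal generator divisible by $x_{l-k}$'' downward to ``no minimal generator divisible by $x_i$ for $i<l-k$'': one must be careful that the strongly stable swaps preserve degree and land inside $\rgin(I)$, and that the resulting power-product divisible by $x_{l-k}$ genuinely obstructs minimality. Alternatively, and perhaps more cleanly, one can bypass this by applying Theorem~\ref{theo:sectmatrixgenxk} iteratively: the level-$(l-k)$ equality expresses $\M_{S/I}(l-k-1,d)$ as a first difference, and combined with the unimodality/monotonicity facts already used in the proof of Theorem~\ref{theo:kSLPtoArtinian} one can show each $\M_{S/I}(i,\cdot)$ for $i\le l-k-1$ is itself eventually increasing and, more to the point, that the column-wise differences at those lower levels are automatically pinned to the sectional matrix entries one level down. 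Either route reduces the corollary to a direct citation of Theorem~\ref{theo:kWLPdiffSectMat}.
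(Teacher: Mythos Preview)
Your overall strategy---deduce from the hypothesis that $\rgin(I)$ has no minimal generator divisible by $x_{l-k}$, propagate via strong stability, and invoke Theorem~\ref{theo:kWLPdiffSectMat}---is exactly the paper's, but you have the direction of propagation backwards, and this is a genuine error. The equalities required by Theorem~\ref{theo:kWLPdiffSectMat} for the $(k{+}1)$-WLP are indexed by $0\le j\le k$, i.e.\ they concern the pairs of rows $(l{-}j,\,l{-}j{-}1)$ and hence the variables $x_{l-j}$ for $j=0,\dots,k$; that is the range $l-k\le i\le l$, not $i\le l-k$. So what you must show is that $\rgin(I)$ has no minimal generator divisible by $x_r$ for any $r\ge l-k$, not for $r\le l-k$.

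Strong stability does give exactly this upward propagation: if some minimal generator $u$ were divisible by $x_r$ with $r>l-k$, then writing $u=x_r v$ one has $v\notin\rgin(I)$ by minimality, while $x_{l-k}v\in\rgin(I)$ by strong stability; a short descent then produces a minimal generator divisible by $x_{l-k}$, contradicting the hypothesis. By contrast, your claimed downward propagation (``no minimal generator divisible by $x_{l-k}$ forces none divisible by $x_i$ for $i\le l-k$'') is false in general---e.g.\ $\langle x_1\rangle\subset\KK[x_1,x_2,x_3]$ is strongly stable with no minimal generator divisible by $x_2$, yet $x_1$ is one---and in any case would not yield the rows you need. Once you reverse the direction, the argument goes through exactly as in the paper: the equality clause of Theorem~\ref{theo:sectmatrixgenxk} gives $\M_{S/I}(l-j-1,d)=\M_{S/I}(l-j,d)-\M_{S/I}(l-j,d-1)$ for all $0\le j\le k$, and Theorem~\ref{theo:kWLPdiffSectMat} finishes.
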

\begin{proof} By Theorem~\ref{theo:sectmatrixgenxk}, the existence of such index $k$ implies that $\rgin(I)$ has no minimal generator divisible by $x_{l-k}$. Since $\rgin(I)$ is a strongly stable ideal, then $\rgin(I)$ has no minimal generator divisible by $x_{r}$ for any $l-k\le r\le l$. Again by Theorem~\ref{theo:sectmatrixgenxk}, this implies that $\M_{S/I}(l-j-1,d)=\M_{S/I}(l-j,d)-\M_{S/I}(l-j,d-1)$ for any $0\le j\le k$. We conclude by Theorem~\ref{theo:kWLPdiffSectMat}.
\end{proof}
In general the statement of Corollary~\ref{corol:triantokwlp} is not an equivalence.

\begin{Example} Let $I$ be the ideal of Example~\ref{ex:sectionalmat}. As seen in Example~\ref{ex:sectionalmatkWLP}, $S/I$ has the $3$-WLP. However, $\M_{S/I}(2,2)>\M_{S/I}(3,2)-\M_{S/I}(3,1)$.
\end{Example}

Similarly to Theorem~\ref{theo:kWLPdiffSectMat}, we can generalize Proposition~\ref{prop:SLPdiffHF} using the language of sectional matrices and characterize the graded algebras having the $k$-SLP via such matrices.

\begin{Theorem}\label{theo:kSLPdiffSectMat} Let $I$ be a homogeneous ideal of $S$ and $1\le k\le l$.
$(S/I,\ell_1,\dots,\ell_k)$ has the $k$-SLP if and only if for every $d\ge0$, $0\le j\le k-1$ and $s\ge1$ we have that
\begin{align*}
&\HF(S/(I+\ideal{\ell_1,\dots,\ell_{j-1},\ell_j^s}),d)=\\
=&\HF(S/(\rgin(I)+\ideal{x_l,\dots,x_{l-j},x_{l-j+1}^s}),d)=\\
=&\max\{\M_{S/I}(l-j,d)-\M_{S/I}(l-j,d-s),0\},
\end{align*}
where $\M_{S/I}(l,r)=0$ for all $r\le0$. 
\end{Theorem}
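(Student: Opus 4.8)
The plan is to follow the same template already used twice in this section: first reduce the problem to the strongly stable monomial case using the earlier results, then verify the Hilbert function identity directly for a strongly stable ideal with $x_l$ as Lefschetz element. Concretely, I would start from Theorem~\ref{theo:kSLPtorgin}, which says that $S/I$ has the $k$-SLP if and only if $(S/\rgin(I), x_l,\dots,x_{l-k+1})$ has the $k$-SLP. The intermediate equality in the displayed formula, namely
$$\HF(S/(I+\ideal{\ell_1,\dots,\ell_{j-1},\ell_j^s}),d)=\HF(S/(\rgin(I)+\ideal{x_l,\dots,x_{l-j},x_{l-j+1}^s}),d),$$
is exactly the Hilbert-function comparison established inside the proof of Theorem~\ref{theo:kSLPtorgin} (the modification of Lemma~1.2 of \cite{conca2003reduction}, written out in the proof of Proposition~\ref{prop:SLPdiffHF}), so I would cite that and treat that equality as already known. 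This also lets me rewrite $\HF(S/(\rgin(I)+\ideal{x_l,\dots,x_{l-j},x_{l-j+1}^s}),d)$ as $\HF((\bar S/\bar J)/\langle\bar{x}_{l-j}^s\rangle, d)$, where $\bar S = \KK[x_1,\dots,x_{l-j}]$ and $\bar J = \rgin(I)\cap\bar S$ is again strongly stable, by the same contraction argument used in Proposition~\ref{prop:kwlplastvarok}.

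Next I would invoke Proposition~\ref{prop:inductionkwlp} repeatedly: $(S/\rgin(I),x_l,\dots,x_{l-k+1})$ has the $k$-SLP if and only if, for each $0\le j\le k-1$, the ring $\bar S/\bar J$ (with $\bar J$ as above, so that $\bar S/\bar J \cong S/(\rgin(I)+\ideal{x_l,\dots,x_{l-j+1}})$) has the SLP with Lefschetz element $x_{l-j}$. Then I apply Proposition~\ref{prop:SLPdiffHF} to the strongly stable ideal $\bar J$ in the polynomial ring $\bar S$: $\bar S/\bar J$ has the SLP with element $x_{l-j}$ if and only if for all $s\ge1$ and $d\ge0$,
$$\HF(\bar S/(\bar J+\ideal{x_{l-j}^s}),d)=\max\{\HF(\bar S/\bar J,d)-\HF(\bar S/\bar J,d-s),0\}.$$
By Definition~\ref{def:SM} together with Theorem~\ref{theo:rgin}, $\HF(\bar S/\bar J,d)=\M_{S/I}(l-j,d)$, and $\HF(\bar S/(\bar J+\ideal{x_{l-j}^s}),d)$ is the remaining term in the displayed equation of the statement. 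Assembling these equivalences over all $j$ with $0\le j\le k-1$ gives the claimed characterization.

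The one point requiring a little care — and the main potential obstacle — is bookkeeping the index shift: the statement writes the generic version with the ideal $\ideal{x_l,\dots,x_{l-j},x_{l-j+1}^s}$, i.e.\ the first $j$ variables $x_l,\dots,x_{l-j+1}$ appear to power one and it is the $(j{+}1)$-st variable $x_{l-j}$ that is raised to the $s$-th power, while the leftmost term uses $\ell_1,\dots,\ell_{j-1}$ linearly and $\ell_j^s$; one must check these conventions are consistent with how Proposition~\ref{prop:inductionkwlp} strips off variables one at a time and with the ``modified Lemma~1.2'' comparison. I would also note the edge cases explicitly: when $j=0$ the ideal $\ideal{x_l,\dots,x_{l-j+1}}$ is interpreted as $\ideal{0}$ (as in Theorem~\ref{theo:rgin}), and the convention $\M_{S/I}(l,r)=0$ for $r\le0$ matches $\HF(S/I,r)=0$ for $r<0$ used in Proposition~\ref{prop:SLPdiffHF}. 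Beyond that indexing care, the proof is a routine concatenation of Theorems~\ref{theo:kSLPtorgin} and~\ref{theo:rgin}, Propositions~\ref{prop:inductionkwlp} and~\ref{prop:SLPdiffHF}, and Definition~\ref{def:SM}, so I would keep the write-up short and refer back to those statements rather than reproving anything.
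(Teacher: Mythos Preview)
Your proposal is correct and follows essentially the same route as the paper: the paper's proof also notes that the first displayed equality holds unconditionally (via the modified Lemma~1.2 of \cite{conca2003reduction} invoked in the proof of Theorem~\ref{theo:kSLPtorgin}) and then obtains the equivalence of the second equality with the $k$-SLP by combining Propositions~\ref{prop:inductionkwlp} and~\ref{prop:SLPdiffHF} with Theorem~\ref{theo:kSLPtorgin}, exactly as you outline. Your indexing concern is legitimate---the paper's own proof in fact writes $\ideal{x_l,\dots,x_{l-j+2},x_{l-j+1}^s}$ (i.e.\ $j-1$ linear variables and one $s$-th power, matching $\ell_1,\dots,\ell_{j-1},\ell_j^s$), so the statement as printed carries an index shift that your bookkeeping remark correctly detects.
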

\begin{proof} The first equality holds independently from the $k$-SLP. In fact, as described in the proof of Theorem~\ref{theo:kSLPtorgin}, by modifying the proof of Lemma~1.2 of \cite{conca2003reduction}, we obtain the equality between the Hilbert function of $S/(\rgin(I)+\ideal{x_l, \dots, x_{l-j+2}, x_{l-j+1}^s})$ and the
Hilbert function of $S/(I+\ideal{\ell_1,\dots, \ell_{j-1}, \ell_j^s})$ for general linear forms $\ell_1,\dots, \ell_j \in S_1$ and $s\ge1$.
The equivalence between the $k$-SLP and the second equality is a consequence of Propositions~\ref{prop:inductionkwlp} and \ref{prop:SLPdiffHF}, and Theorem~\ref{theo:kSLPtorgin}.
\end{proof}

For a graded algebra, having the $k$-SLP implies that the last $k$ rows of the sectional matrix are unimodal functions.

\begin{Proposition}\label{prop:kSLPshaperows} Let $I$ be a homogeneous ideal of $S$ and $1\le k\le l$. If $S/I$ has the $k$-SLP, then $\M_{S/I}(l-j,-)$ is an unimodal function, for all $j=0,\dots, k-1$.
\end{Proposition}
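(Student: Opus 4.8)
The plan is to reduce, as usual, to the strongly stable case and then use the characterization of the $k$-SLP together with the structural results on the sectional matrix. First I would invoke Theorem~\ref{theo:kSLPtorgin} to replace $I$ by $\rgin(I)$: since $\M_{S/I}(i,d)=\M_{S/\rgin(I)}(i,d)$ for all $i,d$ by Theorem~\ref{theo:rgin}, and since $S/I$ has the $k$-SLP if and only if $(S/\rgin(I),x_l,\dots,x_{l-k+1})$ has the $k$-SLP, it suffices to prove the unimodality statement when $I$ is strongly stable and the Lefschetz elements are the last $k$ variables. In this situation, by Definition~\ref{def:SM} and Theorem~\ref{theo:rgin}, the row $\M_{S/I}(l-j,-)$ is precisely the Hilbert function of $S/(I+\ideal{x_l,\dots,x_{l-j+1}})$.

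Next I would fix $j$ with $0\le j\le k-1$ and set $J=I+\ideal{x_l,\dots,x_{l-j+1}}$ (with $J=I$ when $j=0$). By Proposition~\ref{prop:inductionkwlp} applied $j$ times, the hypothesis that $S/I$ has the $k$-SLP implies in particular that $S/J$ has the SLP with Lefschetz element $x_{l-j}$; equivalently, $S/J$, viewed as a quotient of $\KK[x_1,\dots,x_{l-j}]$ (note $J$ is a strongly stable ideal there), has the SLP. Now I apply Proposition~\ref{prop:SLPdiffHF} with $\ell=x_{l-j}$: for every $s\ge1$ and $d\ge0$,
\[
\HF(S/(J+\ideal{x_{l-j}^s}),d)=\max\{\HF(S/J,d)-\HF(S/J,d-s),0\}\ge0,
\]
so in particular $\HF(S/J,d)-\HF(S/J,d-s)$ is bounded below by $-\HF(S/(J+\ideal{x_{l-j}^s}),d)$; what I actually need is the standard consequence that having a strong Lefschetz element forces the Hilbert function to be unimodal. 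Concretely, let $h(d)=\HF(S/J,d)=\M_{S/I}(l-j,d)$. Since $J$ is strongly stable of finite regularity in $\bar S=\KK[x_1,\dots,x_{l-j}]$... wait, $J$ need not be Artinian; but the SLP statement still gives, for each $d$, that the map $\times x_{l-j}\colon (S/J)_d\to(S/J)_{d+1}$ has full rank. I would argue: if $h(d)>h(d+1)$ for some $d$ then this map is not injective, so it is surjective, whence $h(d+1)\ge h(d+2)$ as well (because $\times x_{l-j}^2\colon (S/J)_d\to(S/J)_{d+2}$ is the composite of two surjections when restricted appropriately, or directly because surjectivity of multiplication by $x_{l-j}$ on $S/J$ propagates upward in a strongly stable ideal); iterating, once the Hilbert function starts to strictly decrease it is non-increasing thereafter, which is exactly unimodality.

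The main obstacle, and the step deserving care, is the propagation argument: showing that once $\times x_{l-j}\colon (S/J)_d\to(S/J)_{d+1}$ fails to be injective (hence is surjective), the same map in degree $d+1$ is again surjective. For strongly stable $J$ this is clean — surjectivity of $\times x_{l-j}$ in degree $d$ means every monomial of $(S/J)_{d+1}$ is divisible by $x_{l-j}$, i.e. $u\cdot x_{l-j}\notin J$ only if $x_{l-j}\mid u$ fails to push it... more precisely it means for every monomial $w\notin J$ of degree $d+1$ we have $w/x_{l-j}\notin J$, equivalently $x_{l-j}\mid$ every "new" generator; then strong stability forces the same in degree $d+2$. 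Alternatively, and perhaps more cleanly, I would simply cite that the SLP of $S/J$ with element $x_{l-j}$ means by definition that $\times x_{l-j}^s\colon (S/J)_d\to(S/J)_{d+s}$ is full rank for all $s,d$, and deduce unimodality of $h$ by the classical linear-algebra observation: if $h(d)\le h(d+1)$ fails at some point, pick the largest such "descent"... Actually the slickest route is: $h$ is unimodal iff there is no $d$ with $h(d-1)>h(d)<h(d+1)$; but $h(d)<h(d+1)$ forces $\times x_{l-j}\colon (S/J)_d\to(S/J)_{d+1}$ injective, hence $\times x_{l-j}\colon (S/J)_{d-1}\to(S/J)_d$ injective too (it is the full-rank map in lower degree, and injectivity downward is automatic for strongly stable ideals, or follows since its composite with the injection is $\times x_{l-j}^2$ which is full rank hence injective), giving $h(d-1)\le h(d)$, a contradiction. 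I would write this last clean version, citing Lemma~\ref{lemma:SSWLPx_l} and Proposition~\ref{prop:SLPdiffHF} for the reduction and the full-rank property, and conclude.
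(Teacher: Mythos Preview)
Your reduction is exactly the paper's: pass to $\rgin(I)$ via Theorem~\ref{theo:kSLPtorgin} and Theorem~\ref{theo:rgin}, then use Proposition~\ref{prop:inductionkwlp} to see that each $S/(\rgin(I)+\ideal{x_l,\dots,x_{l-j+1}})$ has the SLP with Lefschetz element $x_{l-j}$, so that the relevant rows of the sectional matrix are Hilbert functions of algebras with the SLP. At that point the paper simply cites Proposition~2.12 of \cite{palezzato2019lefschetz} (SLP $\Rightarrow$ unimodal Hilbert function), whereas you re-derive that implication by hand.

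Your final ``slick'' argument for this last step is essentially correct, but one of the two justifications you offer is not. The claim that ``injectivity of $\times x_{l-j}$ propagates downward for strongly stable ideals'' is false in general: for $I=\ideal{x^2,xy}\subset\KK[x,y]$ (strongly stable), $\times y$ is not injective in degree~$1$ (it kills $x$) but is injective in degree~$2$. What does work is your other justification: if $h(d-1)>h(d)<h(d+1)$, then $\times x_{l-j}\colon(S/J)_d\to(S/J)_{d+1}$ is injective, so the image of $\times x_{l-j}^2\colon(S/J)_{d-1}\to(S/J)_{d+1}$ lies in a subspace of dimension $h(d)<h(d+1)$; hence $\times x_{l-j}^2$ is not surjective, and being full rank it is injective, forcing its first factor $\times x_{l-j}\colon(S/J)_{d-1}\to(S/J)_d$ to be injective and thus $h(d-1)\le h(d)$, a contradiction. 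You should make explicit the step ``full rank and not surjective $\Rightarrow$ injective'' rather than writing ``full rank hence injective''.
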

\begin{proof} By Theorem~\ref{theo:kSLPtorgin}, $(S/\rgin(I), x_l,\dots,x_{l-k+1})$ has the $k$-SLP. By Theorem~\ref{theo:rgin}, $\M_{S/I} (i,d) {=} \M_{S/\rgin(I)}(i,d) {=} \HF(S/(\rgin(I){+}\ideal{x_l,\dots,x_{i+1}}),d),$ for all $1\le i\le l$ and $d\ge0$. This implies that the statement follows by combining Proposition~\ref{prop:inductionkwlp} with Proposition~2.12 from \cite{palezzato2019lefschetz}.
\end{proof}

\begin{Remark}\label{rem:sectnonarttoart} The sectional matrix of $S/\hat{I}$ can be easily obtained from the one of $S/I$. In particular, for every $1\le i\le l$, we have that
$$\M_{S/\hat{I}}(i,d)=
\begin{cases}
\M_{S/I}(i,d)&\text{ if $~0\le d\le\reg(I)$}\\
0&\text{ if $~d\ge\reg(I)+1.$}\\
\end{cases}
$$
\end{Remark}

\begin{Definition} Let $h=(h_0,\dots, h_c)$ be a unimodal sequence of positive integers and $h_i$ the maximum of $h$. Then $h$ is said to be \textbf{quasi-symmetric} if, for every $i<j\le c$, $h_j$ coincides with one of $\{h_0,\dots, h_i\}$.
\end{Definition} 

In Corollary~5.11 from \cite{palezzato2019lefschetz}, the authors proved that if we assume that $l=3$ and that $S/I$ has the SLP, then $\rgin(I)$ is an almost revlex ideal. Similarly to Theorem~6.29 from \cite{harima2013lefschetz}, we can generalize this result to any dimension. 

\begin{Theorem}\label{theo:stronstimpliealmostrevlex} Let $I$ be a homogeneous ideal of $S$ such that $S/I$ has the $l$-SLP.
Suppose that $(\M_{S/I}(l-j,0),\dots,\M_{S/I}(l-j,\reg(I)))$ is quasi-symmetric for all $0\le j\le l-4$. Then $\rgin(I)$ is an almost revlex ideal and it is uniquely determined by the Hilbert function.
\end{Theorem}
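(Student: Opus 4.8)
The plan is to reduce to the Artinian case and then mimic the proof of Theorem~6.29 in \cite{harima2013lefschetz}, using the dictionary between sectional matrices and Hilbert functions of successive hyperplane sections provided by Theorem~\ref{theo:rgin}. First I would pass to $\hat{I}$: by Theorem~\ref{theo:kSLPtoArtinian}, $S/\hat{I}$ has the $l$-SLP, by Remark~\ref{rem:sectnonarttoart} the sectional matrix of $S/\hat{I}$ agrees with that of $S/I$ in degrees $0,\dots,\reg(I)$ and vanishes afterwards, so the quasi-symmetry hypothesis is inherited, and by Theorem~\ref{theo:almostrevlexArtinian} it suffices to prove $\rgin(\hat{I})=\rgin(I)+\ideal{x_1,\dots,x_l}^{\reg(I)+1}$ is almost revlex. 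So from now on assume $S/I$ is Artinian.

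Next I would set up an induction on $l$, peeling off one variable at a time via Proposition~\ref{prop:inductionkwlp}. Writing $J=\rgin(I)$, which by Proposition~\ref{prop:ILPginLP} (and Lemma~\ref{lemma:SSWLPx_l}) may be assumed strongly stable with $l$-SLP realized by $x_l,\dots,x_1$, one has $J\cap\KK[x_1,\dots,x_{l-1}]$ strongly stable and, by Proposition~\ref{prop:inductionkwlp}, $S/(J+\ideal{x_l})\cong \bar S/\bar J$ has the $(l-1)$-SLP; moreover the sectional matrix of $\bar S/\bar J$ is obtained from that of $S/J$ by deleting the bottom row (Theorem~\ref{theo:rgin}), so the quasi-symmetry hypothesis passes to $\bar J$ in the appropriate range. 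By induction $\bar J$ is almost revlex, and it remains to show that $J$ itself is almost revlex, i.e. that $J$ has no minimal generator $t$ with $x_l\nmid t$, unless forced — equivalently, that every minimal generator of $J$ in a given degree $d$ is "as revlex-small as possible." Here I would use Theorem~\ref{theo:sectmatrixgenxk}: $J$ has a minimal generator of degree $d$ divisible by $x_{l-j}$ exactly when $\M_{S/I}(l-j,d)-\M_{S/I}(l-j,d-1)<\M_{S/I}(l-j-1,d)$, so the whole minimal generating set of $J$, row by row, is read off from the first differences of the sectional matrix.

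The core of the argument is then the numerical claim: if $S/I$ has the $l$-SLP (so by Proposition~\ref{prop:kSLPshaperows} and Theorem~\ref{theo:kSLPdiffSectMat} every row $\M_{S/I}(l-j,-)$ is unimodal and its first differences behave like $\max\{\cdot,0\}$) and the top few rows are quasi-symmetric, then the strongly stable ideal $J$ with that sectional matrix is forced to be almost revlex. For this I would argue, for a fixed degree $d$, that the set of monomials of $J$ of degree $d$ is a revlex segment: using the $l$-SLP one controls $\M_{S/I}(1,d),\dots,\M_{S/I}(l,d)$ and their differences, and the quasi-symmetry of rows $l,l-1,\dots$ down to row $4$ (together with the low-dimensional cases $l\le 3$, handled by Lemma~\ref{lemma:ssimpliesalmostrevlex} in two variables and by Corollary~5.11 of \cite{palezzato2019lefschetz} in three) guarantees that once the "profile" of $J_d$ across the rows is pinned down it can only be the revlex one; then for a minimal generator $t$ of $J$ of degree $d$, any $t'$ with $\deg t'=d$ and $t'>_{\DegRevLex}t$ lies in $J_d$ by the segment property, which is exactly Definition~\ref{def:ARL}. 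Finally, uniqueness by the Hilbert function is immediate from Remark~\ref{rem:ARLsameHFsameideal} once we know $\rgin(I)$ is almost revlex, and since $\rgin(I)$ has the same Hilbert function as $S/I$ by Remark~\ref{rem:rginsameHFasideal}, the whole $\rgin(I)$ is determined by $\HF(S/I,-)$.

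The main obstacle I expect is the numerical claim in the last paragraph: translating "$l$-SLP plus quasi-symmetry of the top $l-3$ rows" into "the sectional matrix forces each $J_d$ to be a revlex segment" requires a careful bookkeeping of how the unimodal, $\max$-of-differences behavior of each row constrains the counts of monomials of $J$ divisible by each $x_i$, and matching this against the strongly stable structure; this is the step where one genuinely needs quasi-symmetry (as in Theorem~6.29 of \cite{harima2013lefschetz}) rather than just unimodality, and where the base cases $l=2,3$ feed the induction.
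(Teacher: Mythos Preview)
Your reduction to the Artinian case is exactly what the paper does: pass to $\hat{I}$ via Theorem~\ref{theo:kSLPtoArtinian} and Remark~\ref{rem:sectnonarttoart}, observe $\rgin(\hat{I})=\rgin(I)+\ideal{x_1,\dots,x_l}^{\reg(I)+1}=\widehat{\rgin(I)}$ (this equality needs Theorem~\ref{theo:regginequalideal}, which you should cite), and then use Theorem~\ref{theo:almostrevlexArtinian} to transfer the almost revlex conclusion back to $\rgin(I)$.

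Where you diverge is after this reduction. You set out to \emph{reprove} the content of \cite[Theorem~6.29]{harima2013lefschetz} by induction on $l$, and you correctly flag the resulting ``numerical claim'' as the main obstacle. The paper bypasses this entirely: once $S/\hat{I}$ has the $l$-SLP (hence the $l$-WLP), Theorem~\ref{theo:kWLPdiffSectMat} says that the row $\M_{S/\hat{I}}(l-j,-)$ is literally the $j$-th difference of $\HF(S/\hat{I},-)$ for each $j$. That identification, together with Remark~\ref{rem:sectnonarttoart}, shows the quasi-symmetry hypotheses translate verbatim into the hypotheses of \cite[Theorem~6.29]{harima2013lefschetz}, which is then invoked as a black box to conclude that $\rgin(\hat{I})$ is almost revlex. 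In other words, the single observation ``$l$-SLP $\Rightarrow$ sectional rows $=$ iterated differences'' makes your entire inductive argument and the delicate revlex-segment bookkeeping unnecessary. Your route would work if the numerical claim were fully carried out (it is essentially the proof of \cite[Theorem~6.29]{harima2013lefschetz} itself), but it is much longer and leaves precisely the hard step unfinished; the paper's approach buys you a two-line proof of that step by quoting the existing Artinian result.
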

\begin{proof} By Theorem~\ref{theo:kSLPtoArtinian}, $S/\hat{I}$ has the $l$-SLP, and hence, by Theorem~\ref{theo:kWLPdiffSectMat}, $\M_{S/\hat{I}}(l-j,-)$ coincides with the $j$-th difference of the Hilbert function of $S/\hat{I}$. By Remark~\ref{rem:sectnonarttoart}, this implies that we are in the hypothesis of  \cite[Theorem~6.29]{harima2013lefschetz}, and hence, $\rgin(\hat{I})$ is an almost revlex ideal. By Theorem~\ref{theo:regginequalideal}, $\rgin(I)$ has no minimal generators of degree greater or equal to $\reg(I)+1$. Hence $\rgin(\hat{I})=\rgin(I)+\ideal{x_1,\dots,x_l}^{\reg(I)+1}$. This implies that $\rgin(\hat{I})=\widehat{\rgin(I)}$.
By Theorem~\ref{theo:almostrevlexArtinian}, $\rgin(I)$ is an almost revlex ideal.

Finally, $\rgin(I)$ is determined only by the Hilbert function, by Remark~\ref{rem:ARLsameHFsameideal}.
\end{proof}


\section{Preliminares on hyperplane arrangements}\label{sec:arr}


A finite set of affine hyperplanes $\A =\{H_1, \dots, H_n\}$ in $\KK^l$ 
is called a \textbf{hyperplane arrangement}. For each hyperplane $H_i$ we fix a defining linear polynomial $\alpha_i\in S$ such that $H_i = \alpha_i^{-1}(0)$, 
and let $Q(\A)=\prod_{i=1}^n\alpha_i$. An arrangement $\A$ is called \textbf{central} if each $H_i$ contains the origin of $\KK^l$. 
In this case, each $\alpha_i\in S$ is a linear homogeneous polynomial, and hence $Q(\A)$ is homogeneous of degree $n$. 


We denote by $\Der_{\KK^l} =\{\sum_{i=1}^l f_i\partial_{x_i}~|~f_i\in S\}$ the $S$-module of \textbf{polynomial vector fields} on $\KK^l$ (or $S$-derivations). 
Let $\delta =  \sum_{i=1}^l f_i\partial_{x_i}\in \Der_{\KK^l}$. Then $\delta$ is  said to be \textbf{homogeneous of polynomial degree} $d$ if $f_1, \dots, f_l$ are homogeneous polynomials of degree~$d$ in $S$. 
In this case, we write $\pdeg(\delta) = d$.

\begin{Definition} 
Let $\A$ be a central arrangement in $\KK^l$. Define the \textbf{module of vector fields logarithmic tangent} to $\A$ (or logarithmic vector fields) by
$$D(\A) = \{\delta\in \Der_{\KK^l}~|~ \delta(\alpha_i) \in \ideal{\alpha_i} S, \forall i\}.$$
\end{Definition}

The module $D(\A)$ is a graded $S$-module and we have that $$D(\A)= \{\delta\in \Der_{\KK^l}~|~ \delta(Q(\A)) \in \ideal{Q(\A)} S\}.$$

\begin{Definition} 
A central arrangement $\A$ in $\KK^l$ is said to be \textbf{free with exponents $(e_1,\dots,e_l)$} 
if and only if $D(\A)$ is a free $S$-module and there exists a basis $\delta_1,\dots,\delta_l \in D(\A)$ 
such that $\pdeg(\delta_i) = e_i$, or equivalently $D(\A)\cong\bigoplus_{i=1}^lS(-e_i)$.
\end{Definition}

%

Given an arrangement $\A$ in $\KK^l$, the \textbf{Jacobian ideal} $J(\A)$ of $\A$
is the ideal of $S$ generated by $Q(\A)$ and all its partial derivatives.

The Jacobian ideal has a central role in the study of free arrangements.
In fact, we can characterize freeness by looking at $S/J(\A)$ via the Terao's criterion.
Notice that Terao described this result for characteristic $0$, but the statement holds true for any characteristic as shown in \cite{palezzato2018free}.

\begin{Theorem}[\cite{terao1980arrangementsI}]\label{theo:freCMcod2} 
A central arrangement $\A$ in $\KK^l$ is free if and only if $S/J(\A)$ is $0$ or $(l{-}2)$-dimensional Cohen--Macaulay.
\end{Theorem}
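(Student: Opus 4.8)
The statement to prove is Terao's criterion, Theorem~\ref{theo:freCMcod2}: a central arrangement $\A$ in $\KK^l$ is free if and only if $S/J(\A)$ is $0$ or $(l{-}2)$-dimensional Cohen--Macaulay. Since the paper cites \cite{terao1980arrangementsI} and \cite{palezzato2018free} and calls this a recalled result, the expected ``proof'' is a sketch of the classical argument rather than anything new; what follows is the plan one would write.

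The plan is to connect the module $D(\A)$ of logarithmic derivations with the Jacobian ideal $J(\A)$ through a length-one free resolution. First I would recall the Saito--Euler exact sequence: since $\A$ is central, the Euler derivation $\theta_E = \sum x_i \partial_{x_i}$ lies in $D(\A)$ with $\theta_E(Q) = nQ$, and $D(\A) = S\theta_E \oplus D_0(\A)$ where $D_0(\A)$ is the submodule of derivations $\delta$ with $\delta(Q)=0$. The key homological input is the exact sequence
\begin{equation*}
0 \longrightarrow D_0(\A) \longrightarrow S^{l} \xrightarrow{\ (\partial_1 Q, \dots, \partial_l Q)\ } S \longrightarrow S/J_0(\A) \longrightarrow 0,
\end{equation*}
where $J_0(\A)$ is the ideal generated by the partial derivatives $\partial_i Q$; and one checks $J(\A)$ and $J_0(\A)$ agree up to the relation $nQ = \sum x_i \partial_i Q$, so they define the same quotient up to a shift that does not affect depth or dimension. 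This identifies $D_0(\A)$ (up to a degree twist) with the first syzygy module of $J(\A)$, so that $S/J(\A)$ has a free resolution of length $\le 2$ exactly when $D_0(\A)$ is free, i.e. exactly when $D(\A)$ is free.

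Next I would translate freeness of $D(\A)$ into a Cohen--Macaulay statement via the Auslander--Buchsbaum formula. From the exact sequence above, $\projdim_S(S/J(\A)) \le 2$ always, with equality unless $D_0(\A)$ is free; hence $\depth(S/J(\A)) \ge l-2$ always, and $D(\A)$ free $\iff \projdim_S(S/J(\A)) \le 1 \iff \depth(S/J(\A)) \ge l-1$. On the other hand one shows $\dim(S/J(\A)) = l-2$ whenever $J(\A) \ne S$ (the singular locus of a nonempty central arrangement, cut out by the partials, has codimension exactly $2$; equivalently the non-free-locus/vanishing locus of the Jacobian of $Q$ has pure codimension $2$ by a reflexivity/height argument on $D(\A)$). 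Combining: $S/J(\A)$ is Cohen--Macaulay of dimension $l-2$ precisely when $\depth \ge l-2$ is actually $= l-2 = \dim$, which given the universal bound $\depth \ge l-2$ and $\dim = l-2$ is automatic once we know $J(\A)\ne S$; so the real content is that $D(\A)$ is free $\iff$ either $J(\A)=S$ (the empty/Boolean-in-disguise case, giving $S/J(\A)=0$) or $\projdim \le 1$, and for the latter one must rule out the case $\depth = l-1$ with $\dim = l-2$, which cannot happen, so $\depth = l-1$ would force a contradiction unless the resolution has length $\le 1$. The careful bookkeeping here, matching ``free'' with ``$\projdim \le 1$'' and with ``CM of the right dimension,'' together with the codimension-$2$ fact for the singular locus, is the part requiring the most attention.

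The main obstacle will be establishing that $\codim(S/J(\A)) = 2$ (equivalently $\dim = l-2$) whenever $\A$ is nonempty, since the rest is formal homological algebra on the Saito sequence. For this I would use that $D(\A)$ is a reflexive $S$-module (being a second syzygy, or by a direct normality argument), so its free locus is the complement of a codimension-$\ge 2$ set, and conversely the partials of $Q$ vanish on the codimension-$2$ strata coming from rank-$2$ flats of the intersection lattice, pinning the codimension at exactly $2$. Once $\dim(S/J(\A)) = l-2$ is in hand, the Auslander--Buchsbaum computation closes the equivalence, and the degenerate case $S/J(\A)=0$ corresponds to the (vacuously free) empty arrangement.
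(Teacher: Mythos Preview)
The paper does not prove Theorem~\ref{theo:freCMcod2}: it is quoted as Terao's criterion with a citation to \cite{terao1980arrangementsI}, so there is no argument in the paper to compare against. You correctly anticipate this, and the plan to sketch the standard syzygy/Auslander--Buchsbaum argument is the right template.

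That said, your sketch contains a genuine off-by-one error in the homological bookkeeping. From the exact sequence
\[
0 \longrightarrow D_0(\A) \longrightarrow S^{l} \xrightarrow{\ (\partial_1 Q,\dots,\partial_l Q)\ } S \longrightarrow S/J(\A) \longrightarrow 0
\]
one does \emph{not} obtain $\projdim_S(S/J(\A))\le 2$ in general: this sequence is a free resolution precisely when $D_0(\A)$ is free, so the bound $\projdim_S(S/J(\A))\le 2$ holds \emph{exactly} when $\A$ is free, not always. Your assertions ``$\projdim\le 2$ always'' and ``$D(\A)$ free $\iff \projdim\le 1$'' are both shifted by one; the correct chain is
\[
D(\A)\text{ free} \;\Longleftrightarrow\; D_0(\A)\text{ free} \;\Longleftrightarrow\; \projdim_S(S/J(\A))\le 2 \;\Longleftrightarrow\; \depth(S/J(\A))\ge l-2.
\]
With your inequalities the argument would force every nonempty central arrangement to have $S/J(\A)$ Cohen--Macaulay of dimension $l-2$, hence to be free, which is of course false. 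Once the shift is corrected, the contortion about ``ruling out $\depth=l-1$ with $\dim=l-2$'' evaporates (it was an artifact of the error), and the equivalence follows cleanly from Auslander--Buchsbaum together with $\dim(S/J(\A))=l-2$; your reasoning for the latter via the codimension-$2$ singular locus is fine.
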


In \cite{Gin-freearr}, the authors connected the study of generic initial ideals to the one of arrangements, obtaining a new characterization of freeness via the generic initial ideal of the Jacobian ideal.

\begin{Proposition}[\cite{Gin-freearr}]\label{prop:shapergin}
Let $\A =\{H_1, \dots, H_n\}$ be a central arrangement in $\KK^l$. 
Then $\rgin(J(\A))$ coincides with $S$ or 
its minimal generators include $x_1^{n-1}$, some positive power of
$x_2$, and no monomials only in $x_3,\dots, x_l$. 
\end{Proposition}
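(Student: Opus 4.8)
The plan is to reduce the statement to a combinatorial analysis of the generic initial ideal of the Jacobian ideal. First I would recall that for a central arrangement $\A$ in $\KK^l$ the polynomial $Q(\A)$ is homogeneous of degree $n$, and by Euler's formula $Q(\A)$ itself lies in the ideal generated by its partials, so $J(\A)$ is generated by the $l$ partial derivatives $\partial_{x_1}Q,\dots,\partial_{x_l}Q$, each homogeneous of degree $n-1$. Consequently $J(\A)$ is generated in degree $n-1$, and hence so is $\rgin(J(\A))$ in the sense that it has no minimal generators in degree $<n-1$. If $J(\A)=S$ (equivalently $\rgin(J(\A))=S$) there is nothing to prove, so assume $J(\A)\ne S$.

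The key input is the pencil of hyperplane sections. A general linear section of a hyperplane arrangement is again a hyperplane arrangement (in one lower dimension) with the same number of hyperplanes $n$, provided the $H_i$ stay distinct after restriction, which holds generically. Using Theorem~\ref{theo:rgin} and the description of $\rgin$ via successive generic hyperplane sections, the bottom row $\M_{S/J(\A)}(1,d)$ records the Hilbert function of the Jacobian ideal of the restriction of $\A$ to a general line through the origin; that restriction is a single point with multiplicity, whose Jacobian ideal in $\KK[x_1]$ is $\ideal{x_1^{n-1}}$. This forces $\rgin(J(\A))$ to contain $x_1^{n-1}$ as a minimal generator and no smaller power of $x_1$. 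Similarly, restricting to a general plane gives an arrangement of $n$ lines through the origin in $\KK^2$; its Jacobian ideal is not the whole ring (the arrangement of lines is never ``empty'' for $n\ge1$), so $\rgin$ of that two-variable ideal is a proper strongly stable ideal of $\KK[x_1,x_2]$, which must contain a positive power of $x_2$ among its minimal generators. Since $\rgin(J(\A))$ restricted to the first two variables is exactly this ideal, $\rgin(J(\A))$ has a minimal generator that is a positive power of $x_2$.

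It remains to show that $\rgin(J(\A))$ has no minimal generator involving only $x_3,\dots,x_l$, i.e.\ no minimal generator in $\KK[x_3,\dots,x_l]$. I would argue this from the structure of $D(\A)$. For any central arrangement the Euler derivation $\theta_E=\sum x_i\partial_{x_i}$ lies in $D(\A)$, so $D(\A)$ always contains a free rank-one summand $S\cdot\theta_E$, and writing $D(\A)=S\theta_E\oplus D_0(\A)$ one sees that the projective dimension of $S/J(\A)$ is at most $l-2$; in particular $\depth(S/J(\A))\ge 2$, so $x_1,x_2$ (or rather two generic linear forms, which after applying the generic change of coordinates become $x_l,x_{l-1}$ in the relevant computations, equivalently $x_1,x_2$ after reindexing) form part of a regular sequence outside the relevant associated primes. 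Translating through Theorem~\ref{theo:sectmatrixgenxk}: $\rgin(J(\A))$ having a minimal generator purely in $x_3,\dots,x_l$ would force, via strong stability, minimal generators in $x_2$ and $x_1$ of the same degree and, more to the point, would violate the depth bound by producing $x_1$ and $x_2$ in an associated prime of $S/\rgin(J(\A))=S/J(\A)$ (recall $\Ass$ and depth are read off correctly from $\rgin$ for strongly stable ideals in the relevant range). Thus no such generator exists.

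The main obstacle I anticipate is making the last paragraph rigorous: carefully relating ``$\rgin(J(\A))$ has no minimal generator in $\KK[x_3,\dots,x_l]$'' to the depth/projective-dimension bound $\projdim(S/J(\A))\le l-2$ coming from the Euler derivation splitting. One must be precise about which variables play which role after the generic coordinate change, and one must quote the standard fact (Bayer–Stillman / Eliahou–Kervaire type results, as used in \cite{BPT2016}) that for a strongly stable ideal the last variables appearing in minimal generators control the depth, so $\depth(S/\rgin(J(\A)))\ge 2$ is equivalent to no minimal generator of $\rgin(J(\A))$ lying in $\KK[x_3,\dots,x_l]$. Once that dictionary is in place the rest is the bookkeeping of the line- and plane-section computations described above.
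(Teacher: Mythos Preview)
The paper does not prove this proposition; it is quoted from \cite{Gin-freearr}, so there is no in-paper argument to compare against. Your proposal, however, contains a genuine error in the last step.

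The condition ``$\rgin(J(\A))$ has no minimal generator in $\KK[x_3,\dots,x_l]$'' is \emph{not} equivalent to $\depth(S/\rgin(J(\A)))\ge 2$. For a strongly stable ideal $J$ the Bayer--Stillman/Eliahou--Kervaire dictionary reads $\depth(S/J)=l-m$, where $m$ is the \emph{largest} index of a variable dividing some minimal generator; so $\depth\ge 2$ means no minimal generator is divisible by $x_{l-1}$ or $x_l$, which is an entirely different statement. The correct translation goes through \emph{dimension}: since every associated prime of a strongly stable ideal is of the form $\ideal{x_1,\dots,x_i}$, one has $\sqrt{\rgin(J(\A))}=\ideal{x_1,\dots,x_c}$ with $c=l-\dim(S/J(\A))$, and ``no monomial only in $x_3,\dots,x_l$'' is equivalent to $c\le 2$, i.e.\ to $\dim(S/J(\A))\ge l-2$.

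Your projective-dimension bound is also false. The splitting $D(\A)=S\theta_E\oplus D_0(\A)$ identifies $D_0(\A)$ with the first syzygy module of the partials, giving $\projdim(S/J(\A))=2+\projdim D_0(\A)$; this equals $2$ only when $\A$ is free and can reach $l$ in general (the arrangement of Example~\ref{ex:AnotWLPinR4} has a minimal generator of $\rgin(J(\A))$ divisible by $x_l$, hence $\depth(S/J(\A))=0$). The right argument is geometric: for $n\ge 2$ the zero locus $V(J(\A))$ is exactly $\bigcup_{i<j}H_i\cap H_j$, a union of codimension-two linear subspaces, so $\dim(S/J(\A))=l-2$; this single computation yields both the power of $x_2$ ($c\ge 2$) and the absence of generators in $\KK[x_3,\dots,x_l]$ ($c\le 2$). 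Your line- and plane-section approach to the first two claims also has a small gap---taking the Jacobian ideal does not commute with restriction to a generic linear subspace, you only get a containment---but that one is easily repaired.
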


\begin{Example}\label{ex:NotFreeButLP}
Let $\A$ be the arrangement in $\RR^3$ with defining polynomial $Q(\A)=xyz(x+y+z)$.
In this case $\rgin(J(\A))=\ideal{x^3,x^2y,xy^2,y^4,y^3z}$.
\end{Example}

\begin{Theorem}[\cite{Gin-freearr}]\label{theo:firstequivfregin}
Let $\A =\{H_1, \dots, H_n\}$ be a central arrangement in $\KK^l$. 
Then $\A$ is free if and only if 
$\rgin(J(\A))$ coincides with $S$ or it is minimally generated by
$$x_1^{n-1},\; x_1^{n-2}x_2^{\lambda_1},\; \dots,\; x_2^{\lambda_{n-1}}$$ 
with $1\le\lambda_1<\lambda_2<\cdots<\lambda_{n-1}$ and $\lambda_{i{+}1}-\lambda_i= 1$ or $2$.
\end{Theorem}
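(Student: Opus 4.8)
The plan is to reduce the freeness of $\A$ to a purely combinatorial statement about the shape of $\rgin(J(\A))$, using the machinery already assembled in the excerpt, and then to identify which strongly stable ideals of the form allowed by Proposition~\ref{prop:shapergin} actually come from free arrangements. First, by Terao's criterion (Theorem~\ref{theo:freCMcod2}), $\A$ is free if and only if $S/J(\A)$ is zero or $(l-2)$-dimensional Cohen--Macaulay; the zero case corresponds to $\rgin(J(\A))=S$, so from now on I assume $J(\A)\ne S$. The key observation is that Cohen--Macaulayness of $S/J(\A)$ of codimension $2$ is equivalent to $\rgin(J(\A))$ having the same property, and — since $\rgin(J(\A))$ is strongly stable, hence Borel-fixed — a strongly stable ideal of codimension $2$ defines a Cohen--Macaulay quotient precisely when no minimal generator involves any of the variables $x_3,\dots,x_l$. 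By Proposition~\ref{prop:shapergin} we already know $\rgin(J(\A))$ contains $x_1^{n-1}$ and a power of $x_2$ and no monomial purely in $x_3,\dots,x_l$; combining this with the Cohen--Macaulay condition forces every minimal generator to lie in $\KK[x_1,x_2]$, so that (after saturating out the irrelevant behaviour in the remaining variables) $\rgin(J(\A))$ is, as a subscheme-defining ideal, governed by a strongly stable ideal of $\KK[x_1,x_2]$.

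Next I would describe the possible shapes of a strongly stable ideal of $\KK[x_1,x_2]$ containing $x_1^{n-1}$. Since $x_1>_\sigma x_2$, such an ideal is minimally generated by a "staircase"
$$x_1^{a_0},\; x_1^{a_1}x_2^{\lambda_1},\; x_1^{a_2}x_2^{\lambda_2},\;\dots,\; x_2^{\lambda_m},$$
with strictly increasing $x_2$-exponents and strictly decreasing $x_1$-exponents; strong stability in two variables imposes no further constraint beyond this monotonicity (cf. Lemma~\ref{lemma:ssimpliesalmostrevlex}). The requirement that $x_1^{n-1}$ be a minimal generator pins down $a_0=n-1$, and the interlacing combinatorics of the $a_i$ forces them to decrease by exactly one at each step once one also uses that $\rgin(J(\A))$ is generated in the degree range dictated by the Jacobian ideal of a degree-$n$ polynomial. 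This is where the condition $\lambda_{i+1}-\lambda_i \in \{1,2\}$ enters: it is exactly the translation, via the standard dictionary between two-variable staircases and their Hilbert functions, of the exponents $(e_1,\dots,e_l)=(1,e_2,\dots,e_l)$ of a free arrangement being a partition of $n-1$ in the non-trivial entries with the successive-difference restriction that freeness of rank-$l$ arrangements imposes (Ziegler/Terao addition–deletion on a generic section reduces everything to the rank-$3$ case, where the numerical criterion is classical).

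The main obstacle I expect is the bookkeeping in the converse direction: given a strongly stable ideal $J$ of the prescribed staircase form with the difference condition $\lambda_{i+1}-\lambda_i=1$ or $2$, one must exhibit a free arrangement $\A$ with $\rgin(J(\A))=J$, or at least argue that the Hilbert function forces $S/J(\A)$ to be Cohen--Macaulay. The cleanest route is not to construct $\A$ but to argue backwards: the staircase shape with the difference condition is equivalent to the minimal free resolution of $S/J$ being linear in the appropriate sense, i.e. to $\reg(J)$ being as small as possible given the Hilbert function, and by Theorem~\ref{theo:regginequalideal} this is a statement about $\reg(J(\A))$; one then invokes the fact (essentially the content of Proposition~\ref{prop:shapergin} together with the Cohen--Macaulay criterion above) that for a central arrangement this minimal-regularity condition is equivalent to $S/J(\A)$ being Cohen--Macaulay of codimension $2$, hence to freeness. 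The delicate point is checking that the difference-$1$-or-$2$ condition is genuinely equivalent to, and not merely implied by, the Cohen--Macaulay / no-high-variable-generators condition — this requires verifying that an arbitrary monotone staircase arising as $\rgin(J(\A))$ cannot have a jump of size $\ge 3$ in its $x_2$-exponents, which follows from the fact that $J(\A)$ always contains $n$ forms of degree $n-1$ (the partials of $Q(\A)$) and that consecutive exponents of a free arrangement differ by at most the rank constraints allow; I would spell this out by reducing to $l=3$ via a generic hyperplane section, where it is a known numerical fact about exponents of free plane arrangements.
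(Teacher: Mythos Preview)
The paper does not prove this theorem at all: it is quoted verbatim from \cite{Gin-freearr} and used as a black box (for instance in the proof of Theorem~\ref{thm:FreelSLP}). So there is no ``paper's own proof'' to compare against, and your proposal should be read as an attempted reconstruction of the argument in the cited source.

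On its merits, the skeleton of your argument is sound and matches the standard route: Terao's criterion (Theorem~\ref{theo:freCMcod2}) reduces freeness to $S/J(\A)$ being Cohen--Macaulay of codimension~$2$; by Bayer--Stillman this is equivalent to $S/\rgin(J(\A))$ being Cohen--Macaulay; and by Eliahou--Kervaire a strongly stable ideal has Cohen--Macaulay quotient of codimension~$2$ exactly when all minimal generators lie in $\KK[x_1,x_2]$. Your observation that in a two-variable strongly stable ideal the $x_1$-exponents of consecutive minimal generators drop by exactly one is correct (it follows directly from strong stability, not from ``the degree range dictated by the Jacobian ideal'' as you suggest), and this is what forces the generator list to have the shape $x_1^{n-1}, x_1^{n-2}x_2^{\lambda_1},\dots,x_2^{\lambda_{n-1}}$ once one knows $x_1^{n-1}$ is there.

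The genuine gap is the numerical constraint $\lambda_{i+1}-\lambda_i\in\{1,2\}$, in both directions. Your forward argument invokes ``addition--deletion on a generic section'' and a ``known numerical fact about exponents of free plane arrangements'', but neither of these actually yields the difference bound; what is needed is the explicit Hilbert series of $S/J(\A)$ computed from the free resolution $0\to\bigoplus_i S(-n-e_i+1)\to S(-n+1)^{l}\to J(\A)\to 0$ coming from $D(\A)\cong\bigoplus_i S(-e_i)$, and then a translation of the resulting Hilbert function into the staircase exponents $\lambda_i$. For the converse you propose that the difference condition is equivalent to ``minimal regularity'', but this is not correct: a two-variable strongly stable ideal with a jump $\lambda_{i+1}-\lambda_i\ge 3$ can still have the same regularity as one satisfying the constraint. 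What actually makes the converse work is that any monomial ideal of the displayed form is already Cohen--Macaulay of codimension~$2$ (its generators involve only $x_1,x_2$), so $\rgin(J(\A))$ having this form forces $S/J(\A)$ to be Cohen--Macaulay by Bayer--Stillman again; the difference bound $\le 2$ is then an \emph{additional} property that one must verify is automatically satisfied when the ideal arises as $\rgin(J(\A))$, and this is where the Hilbert-series computation from the exponents is essential. As written, your proposal does not close this loop.
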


\begin{Example}
Let $\A$ be the central arrangement in $\RR^3$ with defining polynomial $Q(\A)=xyz(x-y)(x-z)(y-z)$.
$\A$ is a free arrangement with exponents $(1,2,3)$.
In this case $\rgin(J(\A))=\ideal{x^5,x^4y,x^3y^2,x^2y^4,xy^5,y^7}$.
\end{Example}

\begin{Example}\label{ex:NotFreeButLPexplain} Let $\A$ be the arrangement in Example~\ref{ex:NotFreeButLP}. Then $\A$ is not free since there is a minimal generator of
$\rgin(J(\A))$ that is divisible by $z$. 
\end{Example}

The following Conjecture first appeared in \cite{Gin-freearr}.

\begin{Conjecture}\label{conj:generatZ}
Let $\A$ be a central arrangement in $\KK^l$, 
and consider $d_0=\min\{d~|~x_2^{d}\in\rgin(J(\A))\}$.
If $\rgin(J(\A))$ has a minimal generator $t$ that involves the third variable of $S$, then $\deg(t)\ge d_0$.
\end{Conjecture}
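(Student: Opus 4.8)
\textbf{Proof strategy for Conjecture~\ref{conj:generatZ}.}
The plan is to reduce the statement to a question about the sectional matrix of $S/J(\A)$ and then exploit Theorem~\ref{theo:sectmatrixgenxk} together with the very rigid shape of $\rgin(J(\A))$ given by Proposition~\ref{prop:shapergin}. First I would set $R = \rgin(J(\A))$ and assume $R \ne S$ (otherwise there is nothing to prove). By Proposition~\ref{prop:shapergin}, $R$ is strongly stable with $x_1^{n-1} \in R$, some pure power $x_2^{d_0} \in R$ where $d_0 = \min\{d \mid x_2^d \in R\}$, and no minimal generator lying only in $x_3,\dots,x_l$. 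Suppose, for contradiction, that $R$ has a minimal generator $t$ with $x_j \mid t$ for some $j \ge 3$ and $\deg(t) = d < d_0$. Since $R$ is strongly stable, one may replace $t$ by $x_3^{a} u$ with $u \in \KK[x_1,x_2]$ to find a minimal generator of degree $d$ divisible by $x_3$; more precisely, among all minimal generators of degree $d$ divisible by some $x_j$ with $j \ge 3$, strong stability lets us push the ``high'' variables down to $x_3$, so we may assume $x_3 \mid t$.

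The key step is then to translate ``$R$ has a minimal generator of degree $d$ divisible by $x_3$'' into an inequality of sectional matrix entries via Theorem~\ref{theo:sectmatrixgenxk}: taking $i=3$ there gives
\[
\M_{S/J(\A)}(3,d) - \M_{S/J(\A)}(3,d-1) < \M_{S/J(\A)}(2,d).
\]
On the other hand, since $x_2^{d_0} \in R$ but $x_2^{d} \notin R$ for $d < d_0$ and $R$ contains no minimal generator purely in $x_3,\dots,x_l$, the ideal $R + \ideal{x_l,\dots,x_3}$ restricted to degrees $\le d$ is just the extension of an ideal of $\KK[x_1,x_2]$ whose only relevant generators (in degree $\le d$) are the $x_1$-powers and $x_1^{a}x_2^{b}$-type generators inherited from the bivariate shape. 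I would compute $\M_{S/J(\A)}(2,d)$ and $\M_{S/J(\A)}(2,d-1)$ explicitly from this bivariate picture and show that, for $d < d_0$, the second difference of the row $\M_{S/J(\A)}(3,-)$ is in fact forced to equal $\M_{S/J(\A)}(2,d)$ — i.e.\ equality holds in Theorem~\ref{theo:sectmatrixgenxk} in all degrees below $d_0$ — contradicting the existence of the generator $t$. The point is that below degree $d_0$ the Hilbert function of the generic $(l-2)$-dimensional section $S/(J(\A)+\ideal{\ell_3,\dots,\ell_l})$ agrees with that of a quotient of $\KK[x_1,x_2]$ by an ideal generated in the relevant degrees only by powers of $x_1$ and monomials $x_1^\ast x_2^\ast$, and such quotients never create a ``$x_3$-generator'' in the next section up before a pure power of $x_2$ appears.

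The main obstacle I expect is making the last claim precise: one needs to control $\M_{S/J(\A)}(i,d)$ for $3 \le i \le l$ and $d < d_0$, not just $i = 3$, and show no minimal generator of $R$ involving $x_3,\dots,x_l$ can appear there. The cleanest route is probably an inductive use of Theorem~\ref{theo:sectmatrixgenxk} across the rows $i = l, l-1, \dots, 3$: a minimal generator of degree $d$ divisible by $x_i$ forces strict inequality in row $i$, and strong stability propagates any such generator downward to row $3$, so it suffices to rule out the $i=3$ case — which is exactly the bivariate computation above. A secondary subtlety is the case $\M_{S/J(\A)}(2,d) = 0$ for small $d$, where the inequality is vacuous; but there $R + \ideal{x_l,\dots,x_3}$ already contains all of $S_{\le d}$ in the relevant range, so $d_0 \le d$ is forced and the hypothesis $d < d_0$ cannot occur. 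Assembling these observations yields $d \ge d_0$, contradicting the choice of $t$, and the conjecture follows. \qed
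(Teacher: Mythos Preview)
This statement is labelled and treated in the paper as a \emph{conjecture} (it first appeared in \cite{Gin-freearr}); the paper does not prove it, so there is no proof in the paper for your attempt to be compared against. What follows is an assessment of your argument on its own merits.

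Your approach cannot succeed as written, for a structural reason: the only arrangement-specific input you invoke is Proposition~\ref{prop:shapergin}, and everything else (strong stability, Theorem~\ref{theo:sectmatrixgenxk}, bivariate Hilbert function computations) applies to an arbitrary strongly stable ideal with that shape. But the statement is \emph{false} at that level of generality. For instance, in $\KK[x_1,x_2,x_3]$ the ideal
\[
R=\ideal{x_1^{3},\,x_1^{2}x_2,\,x_1x_2^{2},\,x_1^{2}x_3,\,x_2^{5}}
\]
is strongly stable, has $x_1^{3}$ and a pure power of $x_2$ among its minimal generators, has no minimal generator in $x_3$ alone, yet $x_1^{2}x_3$ is a minimal generator of degree $3<d_0=5$. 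So any argument that would establish the conjecture using only the conclusion of Proposition~\ref{prop:shapergin} together with general facts about strongly stable ideals and sectional matrices must contain an error. In your write-up the error is the step you yourself flag as the ``key step'': you assert that for $d<d_0$ one can \emph{compute} $\M_{S/J(\A)}(3,d)$ from the bivariate picture and thereby force equality in Theorem~\ref{theo:sectmatrixgenxk}. This is circular --- knowing $\M_{S/J(\A)}(3,d)$ already requires knowing the degree-$d$ part of $R\cap\KK[x_1,x_2,x_3]$, which is precisely where a minimal generator divisible by $x_3$ would live. The bivariate data $\M_{S/J(\A)}(2,-)$ alone does not determine $\M_{S/J(\A)}(3,-)$; the example above shows this concretely.

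In short, a proof of Conjecture~\ref{conj:generatZ} must use something about $J(\A)$ beyond the coarse shape of $\rgin(J(\A))$ recorded in Proposition~\ref{prop:shapergin}; your proposal never brings such information into play.
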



\section{Hyperplane arrangements and $k$-Lefschetz properties}

In this section, we study the Jacobian algebra $S/J(\A)$ of an arrangement $\A$ from the point of view of the $k$-Lefschetz properties.

Directly from Theorem~\ref{theo:SLP2dim}, we obtain the following result for arrangements in $2$-dimensional space.
\begin{Lemma}\label{lemma:l=2SLP}
Let $\A$ be a central arrangement in $\KK^2$. Then $S/J(\A)$ has the $2$-SLP.
\end{Lemma}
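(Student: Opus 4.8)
The plan is to reduce directly to Theorem~\ref{theo:SLP2dim}. The key observation is that $S=\KK[x_1,x_2]$ is a polynomial ring in two variables, and $J(\A)$ is a homogeneous ideal of $S$ whenever $\A$ is central (since then each $\alpha_i$ is linear homogeneous, $Q(\A)$ is homogeneous of degree $n$, and all its partials are homogeneous of degree $n-1$). Therefore $J(\A)$ falls squarely within the hypotheses of Theorem~\ref{theo:SLP2dim}, applied with $l=2$.

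The argument is then a single line: by Theorem~\ref{theo:SLP2dim}, for any homogeneous ideal $I$ of $\KK[x,y]$ the quotient $S/I$ has the $2$-SLP; taking $I=J(\A)$ gives that $S/J(\A)$ has the $2$-SLP. One should perhaps note explicitly that this holds trivially also in the degenerate case $J(\A)=S$ (i.e. $S/J(\A)=0$), since the zero ring vacuously has every Lefschetz property, so there is no need to separate cases.

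There is essentially no obstacle here; the only point requiring a word of care is confirming that $J(\A)$ is indeed homogeneous, which is exactly the content of the ``central'' hypothesis as recalled in Section~\ref{sec:arr}. I would write the proof as follows.

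\begin{proof}
Since $\A$ is central, each defining form $\alpha_i$ is linear and homogeneous, so $Q(\A)=\prod_{i=1}^n\alpha_i$ is homogeneous of degree $n$ and all its partial derivatives are homogeneous of degree $n-1$. Hence the Jacobian ideal $J(\A)$ is a homogeneous ideal of $S=\KK[x_1,x_2]$. The statement now follows immediately from Theorem~\ref{theo:SLP2dim} applied to $I=J(\A)$.
\end{proof}
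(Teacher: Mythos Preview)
Your proof is correct and follows exactly the paper's approach: the paper states the lemma as an immediate consequence of Theorem~\ref{theo:SLP2dim} with no further argument, and your proof simply spells out why $J(\A)$ satisfies its hypotheses.
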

%
%
%
The freeness of an arrangement $\A$ forces their Jacobian algebra $S/J(\A)$ to have the $l$-SLP.
\begin{Theorem}\label{thm:FreelSLP}
Let $\A$ be a free arrangement in $\KK^l$. Then $S/J(\A)$ has the $l$-SLP.
\end{Theorem}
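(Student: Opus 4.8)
The plan is to reduce the freeness hypothesis to a statement about $\rgin(J(\A))$ via Theorem~\ref{theo:firstequivfregin}, and then verify the $l$-SLP directly for that very explicit strongly stable ideal. First I would dispose of the trivial case: if $\rgin(J(\A)) = S$, then $S/J(\A) = 0$, which vacuously has the $l$-SLP. So assume $\A$ is free and nontrivial, so that by Theorem~\ref{theo:firstequivfregin} the ideal $G := \rgin(J(\A))$ is minimally generated by $x_1^{n-1},\, x_1^{n-2}x_2^{\lambda_1},\, \dots,\, x_2^{\lambda_{n-1}}$ with $1 \le \lambda_1 < \cdots < \lambda_{n-1}$ and consecutive jumps equal to $1$ or $2$. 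By Theorem~\ref{theo:kSLPtorgin}, $S/J(\A)$ has the $l$-SLP if and only if $(S/G, x_l, \dots, x_1)$ has the $l$-SLP, so it suffices to work with $G$ alone.

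The key observation is that $G$ involves only the variables $x_1$ and $x_2$. I would first argue that $S/G$ has the SLP with Lefschetz element $x_l$, and that in fact the quotients by $x_l, x_{l-1}, \dots, x_3$ are ``free'' in the sense that $G$ has no minimal generator divisible by any $x_i$ with $i \ge 3$: by Theorem~\ref{theo:sectmatrixgenxk} (or directly), this means each of the first $l-2$ steps in the definition of the $l$-SLP for $(S/G, x_l, \dots, x_3)$ is automatic, since passing from $S/(G + \ideal{x_l, \dots, x_{i+1}})$ to $S/(G + \ideal{x_l, \dots, x_i})$ only kills a polynomial variable and the multiplication maps by $x_i$ stay full rank. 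After these $l-2$ steps we are reduced to showing that $\bar S/\bar G$ has the $2$-SLP, where $\bar S = \KK[x_1, x_2]$ and $\bar G = G \cap \bar S$ is the strongly stable ideal with the same generators. But this is exactly Theorem~\ref{theo:SLP2dim} (the $2$-variable case), so $(\bar S/\bar G, x_2, x_1)$ has the $2$-SLP. Concatenating, $(S/G, x_l, \dots, x_1)$ has the $l$-SLP, and we are done.

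Let me make the reduction of the first $l-2$ steps precise, since that is the part that needs care. Suppose $J$ is a strongly stable ideal of $S = \KK[x_1, \dots, x_l]$ with no minimal generator divisible by $x_l$. I claim $S/J$ has the SLP with Lefschetz element $x_l$ and $S/(J + \ideal{x_l}) \cong \KK[x_1, \dots, x_{l-1}]/J'$ where $J'$ is obtained by the same generators; moreover $\HF(S/(J+\ideal{x_l^s}), d) = \max\{\HF(S/J, d) - \HF(S/J, d-s), 0\}$ for all $s, d$, so by Proposition~\ref{prop:SLPdiffHF} one gets the SLP with element $x_l$. Indeed, since no minimal generator of $J$ involves $x_l$, every monomial $x_l^s t \in J$ forces $t \in J$, so $J_d$ decomposes as the disjoint union of $x_l^s \cdot J_{d-s}$ and the monomials in $J_d$ not divisible by $x_l^s$, which gives the Hilbert function identity exactly as in the proof of Proposition~\ref{prop:SLPdiffHF}. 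Applying this with $J = G + \ideal{x_l, \dots, x_{i+1}}$ for $i = l, l-1, \dots, 3$ in turn (each of which is still strongly stable with no minimal generator divisible by $x_i$, since $G$ involves only $x_1, x_2$) gives the first $l - 2$ conditions in Definition~\ref{def:klefsprop} for the chain $x_l, \dots, x_3$, and we land on $\KK[x_1,x_2]/\bar G$.

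The main obstacle is not any single deep step but rather handling the bookkeeping of the non-Artinian setting cleanly: the quotient $S/G$ is not Artinian (it has Krull dimension $l-2$, consistent with Theorem~\ref{theo:freCMcod2}), so one must use the non-Artinian formulations of the Lefschetz properties and the sectional-matrix machinery built up in Sections~5 and~6, rather than citing the classical Artinian results directly. Once the reduction to $\KK[x_1, x_2]$ is in place, Theorem~\ref{theo:SLP2dim} does all the remaining work, so the real content is the clean ``stripping of polynomial variables'' argument above, which is essentially Proposition~\ref{prop:inductionkwlp} applied repeatedly together with the observation that a variable not appearing in any minimal generator is automatically a Lefschetz element.
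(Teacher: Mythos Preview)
Your argument is correct, but it takes a different---and somewhat more hands-on---route than the paper's. The paper observes in one line that the explicit generating set $x_1^{n-1}, x_1^{n-2}x_2^{\lambda_1}, \dots, x_2^{\lambda_{n-1}}$ makes $\rgin(J(\A))$ an \emph{almost revlex} ideal, and then invokes Theorem~\ref{theo:almrevlexlSLP} to conclude the $l$-SLP immediately before passing back through Theorem~\ref{theo:kSLPtorgin}. You instead bypass the almost revlex machinery entirely: you strip off the variables $x_l, \dots, x_3$ one at a time, using the elementary fact that a variable absent from all minimal generators of a monomial ideal is automatically a strong Lefschetz element (since multiplication by any power of it is injective on the quotient), and then finish with Theorem~\ref{theo:SLP2dim} in $\KK[x_1,x_2]$. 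Your approach is more self-contained---it does not need the reader to verify the almost revlex condition or to unpack Theorem~\ref{theo:almrevlexlSLP}---while the paper's is considerably shorter once those tools are in hand. In effect you are unrolling, for this special ideal, exactly the induction that proves Theorem~\ref{theo:almrevlexlSLP}.
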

\begin{proof}
If $l=2$ we can directly conclude by Lemma~\ref{lemma:l=2SLP}.
Assume $l\geq3$. By Theorem~\ref{theo:firstequivfregin}, $\rgin(J(\A)) = \ideal{x_1^{n-1},\; x_1^{n-2}x_2^{\lambda_1},\; \dots,\; x_2^{\lambda_{n-1}}}$. This implies that $\rgin(J(\A))$ is an almost revlex ideal. By Theorem~\ref{theo:almrevlexlSLP}, 
$S/\rgin(J(\A))$ has the $l$-SLP. By Theorem~\ref{theo:kSLPtorgin}, $S/J(\A)$ has the $l$-SLP.
\end{proof}
Notice that Theorem~\ref{thm:FreelSLP} is not an equivalence.

\begin{Example}
Let $\A$ be the arrangement in $\RR^3$ of Example~\ref{ex:NotFreeButLP}. As described in Example~\ref{ex:NotFreeButLPexplain},
$\A$ is non-free. However, a direct computation shows that $z$ is a strong Lefschetz element for $S/\rgin(J(\A))$.
This implies that $S/\rgin(J(\A))$ has the $1$-SLP, and hence, by Corollary~\ref{cor:l-2slptolslp}, it has the $3$-SLP.
By Theorem~\ref{theo:kSLPtorgin}, also $S/J(\A)$ has the $3$-SLP.
\end{Example} 

Not all arrangements have their Jacobian algebra that has the $l$-WLP.

\begin{Example}\label{ex:AnotWLPinR4}
Let $\A$ be the arrangement in $\RR^4$ with defining polynomial $Q(\A)=xyzw(x-y+z)(y+z-3w)(x+z+w)(x-5w)$.
In this case we have that $\HF(S/\rgin(J(\A)),9)=180$ and $\HF(S/\rgin(J(\A)),10)=207$.
This shows that the multiplication by $w$ from $(S/\rgin(J(\A)))_9$ to $(S/\rgin(J(\A)))_{10}$ is not surjective.
On the other hand, $x^2y^5z^2w$ is a minimal generator of $\rgin(J(\A))$ but $x^2y^5z^2\notin\rgin(J(\A))$, 
and hence the multiplication by $w$ from degree $9$ to degree $10$ is not even injective.
This shows that $w$ is not a Lefschetz element for $S/\rgin(J(\A))$, and hence, by Proposition~\ref{prop:kwlplastvarok},
$S/\rgin(J(\A))$ does not have the $1$-WLP.
By Theorem~\ref{theo:kSLPtorgin}, also $S/J(\A)$ does not have the $1$-WLP.
\end{Example}

%
%
%
%
%
%
If Conjecture~\ref{conj:generatZ} holds, this would give us informations on the Jacobian algebra of arrangements in $\KK^3$. 

\begin{Proposition}
Let $\A$ be a central arrangement in $\KK^3$. 
If Conjecture~\ref{conj:generatZ} holds, then $S/J(\A)$ has the $3$-WLP.
\end{Proposition}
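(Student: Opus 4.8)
The plan is to leverage the structure of $\rgin(J(\A))$ for a central arrangement in $\KK^3$, as recorded in Proposition~\ref{prop:shapergin} and the conjectural refinement in Conjecture~\ref{conj:generatZ}, together with the sectional-matrix characterization of the $k$-WLP from Theorem~\ref{theo:kWLPdiffSectMat} (or, more directly, Corollary~\ref{corol:triantokwlp}). By Theorem~\ref{theo:kSLPtorgin}, it suffices to prove that $(S/\rgin(J(\A)), x_3, x_2, x_1)$ has the $3$-WLP, so we may work entirely with the monomial ideal $J=\rgin(J(\A))$ in $S=\KK[x_1,x_2,x_3]$, which is strongly stable. If $J=S$ the statement is trivial, so assume otherwise.

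First I would handle the easiest row: by Proposition~\ref{prop:shapergin}, $J$ has no minimal generator that is a monomial purely in $x_3$; since $J$ is strongly stable this already forces $J$ to have no minimal generator divisible by $x_3$ at all? — no, that is false in general (e.g. $x_2^3 x_3$ can be a minimal generator), so instead I would argue at the level of multiplication maps. Write $d_0=\min\{d\mid x_2^d\in J\}$ and let $m$ be the top degree of a minimal generator of $J$, i.e. $m=\reg(J)$. The key input from Conjecture~\ref{conj:generatZ} is that every minimal generator of $J$ involving $x_3$ has degree $\ge d_0$. I would combine this with the following observation about the middle row of the sectional matrix: by Theorem~\ref{theo:sectmatrixgenxk}, $\M_{S/J}(2,d)-\M_{S/J}(2,d-1)=\M_{S/J}(1,d)$ for all $d$ such that $J$ has no minimal generator of degree $d$ divisible by $x_2$; and the condition $\M_{S/J}(1,d)=\max\{\M_{S/J}(2,d)-\M_{S/J}(2,d-1),0\}$ that we must verify can fail only where the left side is $0$ while the difference on the right is negative, i.e. where $\M_{S/J}(2,-)$ is strictly decreasing. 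Since $\M_{S/J}(2,d)=\HF(S/(J+\ideal{x_3}),d)=\HF(\KK[x_1,x_2]/\bar J,d)$ with $\bar J=J\cap\KK[x_1,x_2]$ a strongly stable (hence, by Lemma~\ref{lemma:ssimpliesalmostrevlex}, almost revlex) ideal in two variables, Theorem~\ref{theo:SLP2dim} gives that $\KK[x_1,x_2]/\bar J$ has the $2$-SLP, so in particular $\M_{S/J}(2,-)$ and $\M_{S/J}(1,-)$ together satisfy exactly the required WLP-type identities for the bottom two rows $j=1,2$. Thus only the top relation, namely $\M_{S/J}(2,d)=\max\{\M_{S/J}(3,d)-\M_{S/J}(3,d-1),0\}$ for $0\le d\le \reg(J)$, remains, and by Theorem~\ref{theo:sectmatrixgenxk} this is equivalent to: for every $d$ in the range where $\M_{S/J}(3,-)=\HF(S/J,-)$ is non-increasing, $J$ has no minimal generator of degree $d$ divisible by $x_3$, equivalently (since $J$ is strongly stable) $J$ has no minimal generator of degree $d$ at all except possibly the pure power $x_1^{d}$-type ones not involving $x_3$.

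So the crux is: \emph{the Hilbert function of $S/J$ is non-increasing exactly on the tail past the last socle-type degree, and every minimal generator of $J$ divisible by $x_3$ sits in a degree where $\HF(S/J,-)$ is still (weakly) increasing.} Here is where Conjecture~\ref{conj:generatZ} enters decisively. By Proposition~\ref{prop:shapergin} the minimal generators of $J$ are $x_1^{n-1}$, a pure power $x_2^{d_0}$, various mixed monomials in $x_1,x_2$, and possibly monomials divisible by $x_3$; by the Conjecture the latter all have degree $\ge d_0$. I would show that $\HF(S/J,d)$ is non-decreasing for $d\le d_0-1$ trivially (all generators below degree $d_0$ lie in $\KK[x_1,x_2]$, and one checks using the two-variable almost-revlex structure that the one-dimensional "$x_3$-direction" keeps the Hilbert function growing), and then, for $d\ge d_0$, analyze how adding a mixed $x_3$-generator in degree $d$ interacts with the already-present pure power $x_2^{d_0}$: strong stability plus the presence of $x_2^{d_0}$ means the "new" space killed in degree $d$ by that generator is compensated, column by column, by the monomials $x_1^a x_2^b$ with $a+b=d$ that are not yet in $J$ — precisely the content that makes the sectional-matrix difference $\M_{S/J}(3,d)-\M_{S/J}(3,d-1)$ stay $\ge \M_{S/J}(2,d)\ge 0$. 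This is the step I expect to be the main obstacle: turning the degree bound of Conjecture~\ref{conj:generatZ} into the monotonicity/compensation statement needs a careful bookkeeping of the strongly stable generators of $J$ in degrees $\ge d_0$, and one must rule out the scenario where $\HF(S/J,-)$ dips and then an $x_3$-divisible generator appears in that dip. Once that bookkeeping is done, Corollary~\ref{corol:triantokwlp} (applied with $k=2$, after checking $\M_{S/J}(1,d)=\M_{S/J}(2,d)-\M_{S/J}(2,d-1)$ for all $d$, which holds because $J$ has no minimal generator divisible by $x_2$ other than the two-variable ones that are already accounted for by the almost-revlex structure — in fact $\bar J$ being almost revlex forces this difference identity) together with Theorem~\ref{theo:kWLPdiffSectMat} for the top row yields that $S/\rgin(J(\A))$ has the $3$-WLP, and Theorem~\ref{theo:kSLPtorgin} transfers this back to $S/J(\A)$.
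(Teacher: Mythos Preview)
Your reduction to the single top-row relation is correct and is exactly Corollary~\ref{cor:l-2slptolslp} in disguise: for $l=3$ the $3$-WLP, $2$-WLP and $1$-WLP are all equivalent, because the two lower identities in Theorem~\ref{theo:kWLPdiffSectMat} are governed by $\KK[x_1,x_2]/\bar J$ and Theorem~\ref{theo:SLP2dim} handles them automatically. The paper proceeds in precisely this way, but it does \emph{not} attempt to derive the $1$-WLP from Conjecture~\ref{conj:generatZ} by hand; it simply cites Proposition~8.8 of \cite{palezzato2019lefschetz}, which already establishes that implication, so the whole argument is two lines.

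Your sketch of a direct proof of the $1$-WLP, however, contains a genuine error. You assert that the top relation is equivalent to ``for every $d$ where $\HF(S/J,-)$ is non-increasing, $J$ has no minimal generator of degree $d$ divisible by $x_3$'', and then aim to show that every $x_3$-divisible minimal generator lies in the weakly increasing range of the Hilbert function. Both claims are false already in Example~\ref{ex:NotFreeButLP}: there $J=\rgin(J(\A))=\ideal{x^3,x^2y,xy^2,y^4,y^3z}$ has Hilbert function $1,3,6,7,6,\dots$, and the unique $x_3$-divisible minimal generator $y^3z$ sits precisely in the degree where the Hilbert function drops. What makes the $1$-WLP hold there is that $\M_{S/J}(2,4)=0$ (since $\bar J=\ideal{x^3,x^2y,xy^2,y^4}$ contains every degree-$4$ monomial in $x,y$), matching $\max\{6-7,0\}=0$. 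The correct dichotomy is: on the strictly decreasing part one needs $\M_{S/J}(2,d)=0$, while on the non-decreasing part one needs the absence of $x_3$-divisible minimal generators in that degree. Your appeal to Corollary~\ref{corol:triantokwlp} at the end is likewise off, since that corollary requires the difference identity \emph{without} the $\max\{-,0\}$, which fails in this same example. The idea you are missing is short: since $x_2^{d_0}\in J$ and $J$ is strongly stable, $\M_{S/J}(2,d)=0$ for all $d\ge d_0$, and then Theorem~\ref{theo:sectmatrixgenxk} forces $\M_{S/J}(3,d)-\M_{S/J}(3,d-1)\le\M_{S/J}(2,d)=0$ there; for $d<d_0$ the conjecture rules out $x_3$-divisible generators in degree $d$ and Theorem~\ref{theo:sectmatrixgenxk} gives equality. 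This is essentially the content of the cited Proposition~8.8.
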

\begin{proof}
By Proposition~8.8 from \cite{palezzato2019lefschetz}, if Conjecture~\ref{conj:generatZ} holds, then $S/J(\A)$ has the $1$-WLP.
We conclude by Corollary~\ref{cor:l-2slptolslp}.
\end{proof}

\bigskip
\paragraph{\textbf{Acknowledgements}} 
During the preparation of this article the second author was supported by JSPS Grant-in-Aid for Early-Career Scientists (19K14493).




\bibliography{bibliothesis}{}
\bibliographystyle{plain}

\end{document}